\documentclass[addpoints,12pt,letter]{article}

\usepackage{amscd}
\usepackage{amsfonts}
\usepackage{amsmath}
\usepackage{amssymb}
\usepackage{amsthm}
\usepackage{authblk}
\usepackage{array}
\usepackage{booktabs}
\usepackage{chngpage}
\usepackage{color}
\usepackage{commath}
\usepackage{comment}
\usepackage{diagbox}
\usepackage[shortlabels]{enumitem}
\usepackage{esint}
\usepackage{fge}
\usepackage{float}
\usepackage{graphics}
\usepackage{graphicx}
\usepackage[colorlinks=true, citecolor=blue, linkcolor=blue]{hyperref}
\usepackage{latexsym}
\usepackage{mathrsfs}
\usepackage{mathtools}
\usepackage{multicol}
\usepackage{multirow}
\usepackage{nccmath}
\usepackage{pgfplots}
\usepackage{physics}
\usepackage{subcaption}
\usepackage{textcomp}
\usepackage[sc]{titlesec}
\usepackage{tikz}
\usepackage{tikz-cd}
\usepackage[utf8x]{inputenc}
\usepackage{xcolor}

\DeclareMathOperator{\diag}{diag}
\DeclareMathOperator{\dist}{dist}

\DeclareMathOperator{\e}{e}

\DeclareMathOperator{\im}{im}
\DeclareMathOperator{\ind}{ind}

\DeclareMathOperator{\spn}{span}

\DeclareMathOperator{\wind}{wind}

\DeclareMathOperator{\mesh}{mesh} 
\DeclareMathOperator{\mi}{i} 
\newcommand{\bx}{\boldsymbol{x}}

\makeatletter
\@for\@letter:=A,B,C,D,E,F,G,H,I,J,K,L,M,N,O,P,Q,R,S,T,U,V,W,X,Y,Z\do{%
\expandafter\edef\csname b\@letter\endcsname{\noexpand\mathbb{\@letter}}}
\@for\@letter:=A,B,C,D,E,F,G,H,I,J,K,L,M,N,O,P,Q,R,S,T,U,V,W,X,Y,Z\do{%
\expandafter\edef\csname c\@letter\endcsname{\noexpand\mathcal{\@letter}}}
\makeatother

\newcommand{\al}{\alpha}
\newcommand{\be}{\beta}
\newcommand{\de}{\delta}
\newcommand{\De}{\Delta}
\newcommand{\ds}{\displaystyle}

\newcommand{\eps}{\varepsilon}

\newcommand{\ga}{\gamma}

\newcommand{\ka}{\kappa}

\newcommand{\la}{\lambda}
\newcommand{\La}{\Lambda}
\newcommand{\nf}{\infty}
\newcommand{\ol}{\overline}

\newcommand{\Om}{\Omega}
\newcommand{\ph}{\varphi}
\newcommand{\Ph}{\Phi}

\newcommand{\sm}{\setminus}
\newcommand{\tht}{\theta}
\newcommand{\ti}{\tilde}

\newcommand{\ze}{\zeta}

\newcommand{\SR}{S}
\newcommand{\TF}{T}

\renewcommand{\ge}{\geqslant}
\renewcommand{\d}{\dif}
\renewcommand{\le}{\leqslant}

\renewcommand{\sp}{\mathop{\mathrm{sp}}}

\makeatletter
\renewcommand*\env@matrix[1][c]{\hskip-\arraycolsep
\let\@ifnextchar\new@ifnextchar
\array{*\c@MaxMatrixCols #1}}
\makeatother

\usetikzlibrary{arrows,calc,shapes,positioning}
\usetikzlibrary{decorations.markings}
\tikzstyle arrowstyle=[scale=1]
\tikzstyle directed=[postaction={decorate,decoration={markings,mark=at position .65 with {\arrow[arrowstyle]{stealth}}}}]
\tikzstyle reverse directed=[postaction={decorate,decoration={markings,mark=at position .65 with {\arrowreversed[arrowstyle]{stealth};}}}]

\numberwithin{equation}{section}

\newtheorem{lemma}{Lemma}[section]
\newtheorem{theorem}[lemma]{Theorem}
\newtheorem{proposition}[lemma]{Proposition}
\newtheorem{corollary}[lemma]{Corollary}

\theoremstyle{definition}

\newtheorem{remark}[lemma]{Remark}

\begin{document}

\title{Condition numbers of block Toeplitz matrices and stability of space-time IgA approximations for the wave and Schrödinger equations}
\date{\today}
\author[1]{M. Bogoya\thanks{johan.bogoya@correounivalle.edu.co}}
\author[2]{A. B\"ottcher\thanks{aboettch@mathematik.tu-chemnitz.de}}
\author[3]{M. Ferrari\thanks{m.ferrari@unipv.it}}
\author[4,5]{S.M. Grudsky\thanks{grudsky@math.cinvestav.mx}}
\author[6,7]{S.~Serra-Capizzano\thanks{s.serracapizzano@uninsubria.it}}

\affil[1]{\footnotesize Departamento de Matem\'aticas, Universidad del Valle, Cali, Colombia}
\affil[2]{\footnotesize Fakult\"at f\"ur Mathematik, Technische Universit\"at Chemnitz, Chemnitz, Germany}
\affil[3]{\footnotesize Dipartimento di Matematica, Universit\`a di Pavia, Pavia, Italy}
\affil[4]{\footnotesize Departamento de Matem\'aticas, CINVESTAV del IPN, CDMX, Mexico}
\affil[5]{\footnotesize Southern Federal University, Regional Mathematical Center, Rostov-on-Don, Russia}
\affil[6]{\footnotesize Dipartimento di Scienza e Alta Tecnologia, Universit\`a degli Studi dell'Insubria, Como, Italy}
\affil[7]{\footnotesize Division of Scientific Computing, Department of Information Technology, Uppsala University, Uppsala, Sweden}
\date{\footnotesize\today}

\maketitle

\begin{abstract}
In previous work by several authors, the behavior of the condition numbers of banded Toeplitz  matrices was studied as the matrix size tends to infinity.
In the present contribution, two main directions are pursued.
As a first step, we extend this study to block Toeplitz matrices with blocks of fixed size $N$.
As in the scalar case, we show that even when the symbol generates a Fredholm infinite Toeplitz operator, the condition numbers of the finite matrices may grow at least exponentially.
Upper and lower bounds for the condition numbers are obtained, and examples showing that they may grow arbitrarily fast are presented.
Then, as a second step, we apply the developed theory to the stability analysis of space-time Galerkin methods, where in time an Isogeometric approach is used with regularity $r$, $1\le r\le p-1$, $p$ being the employed polynomial degree.
These stability issues are related exactly to the conditioning of block Toeplitz-like matrices with blocks of size $N=p-r$.
Specific examples are treated in detail and related numerical experiments are presented and critically discussed.
We finally present a short list of relevant open problems.
\end{abstract}

\section{Introduction}

In the present work we are interested in the stability of numerical methods when approximating hyperbolic partial differential equations (PDEs) using isogeometric analysis (IgA)~\cite{CoHu09,HuCoBa05} in time with polynomial degree $p\ge 1$ and intermediate regularity $1\le r\le p-1$.
The PDEs considered here are the linear wave equation and the linear Schrödinger equation.
More precisely, we consider the linear acoustic wave equation
\begin{equation}\label{eq:45}
\begin{cases}
\partial_{t}^{2} U(\bx,t)-\De_{\bx} U(\bx,t)=F(\bx,t), & (\bx,t)\in Q_{\TF}=\Om\times (0,\TF),\\
U(\bx, t)=0, & (\bx, t)\in\partial\Om\times (0, \TF),\\
U(\bx, 0)=0,\quad\partial_{t} U(\bx, 0)=0, &\bx\in\Om,
\end{cases}
\end{equation}
and the linear Schrödinger equation
\begin{equation}\label{eq:426}
\begin{cases}
\mi\partial_{t}\psi(\bx,t)-\De_{\bx}\psi(\bx,t)=F(\bx,t), & (\bx,t)\in Q_{\TF},\\[1ex]
\psi(\bx, t)=0, & (\bx, t)\in\partial\Om\times (0, \TF ),\\[1ex]
\psi(\bx, 0)=0, & \bx\in\Om,
\end{cases}
\end{equation}
with $\Om\subset\bR^{d}$ ($d=1,2,3$), $\TF>0$, and $F$ being a given source term(real-valued for \eqref{eq:45}, complex-valued for \eqref{eq:426}) in $L^{2}$ over $Q_{\TF}$.
After considering space-time weak formulations, both problems are approximated in time by the IgA method on an equispaced mesh with $p\ge 1$ and regularity $1\le r\le p-1$.
For the case of maximal regularity $r=p-1$, the stability of the corresponding numerical approximations has been investigated in~\cite{FeFr26,FeFrLoPe25,FeGo25} by using tools such as the asymptotic conditioning of real banded nonsymmetric Toeplitz matrix-sequences.
More in detail, we employ the results of~\cite{AmoBru96}, which relate the asymptotic behavior of the condition number to the number and position of the zeros of the Laurent polynomial generating the considered banded Toeplitz matrix-sequence.
In this direction, an exhaustive study of the asymptotic conditioning of real banded nonsymmetric Toeplitz matrix-sequences from the point of view of operator theory is presented in~\cite{BoGr99}.

On the other hand, when considering an intermediate regularity such that $N=p-r\ge 2$, the structures are again of banded Toeplitz type, but with blocks of fixed size $N\ge 2$ and possibly with some low rank correction term, i.e., they are of block Toeplitz-like nature.
For block Toeplitz matrix-sequences distribution and extremal results for singular values and eigenvalues exist both in the operator theory community~\cite{Goh1,Goh2,Goh3,WiBlo3,WiBlo2,WiBlo1} and in the numerical analysis community~\cite{BaGa20b,BaGa20a,DoNeySe12,Se99e,Se99f,SeTi99,Ti98a};
see also~\cite{GaSe15,GaSp19} for specific spectral results in the context of high order finite element methods and IgA with intermediate regularity.

However, despite the rich literature on the subject, a block equivalent of the systematic works as in~\cite{AmoBru96,BoGr99} does not exist so far for the asymptotic conditioning, and to fill this gap is our first task in the current work.
Our findings are derived following the notation in~\cite{BoGr99} and then they are interpreted with the terminology in~\cite{AmoBru96}.
As a final part, we apply them to the stability analysis of the considered space-time discretizations with various choices of the parameters $r, p$ so that $N=p-r\ge 2$, by considering the pure block Toeplitz structures and ignoring the low rank correction terms.
Numerical tests are performed and discussed, followed by open problems and proposals for future work.

\section*{Main novelties}

The main novelties of the present work are listed below.
The first of them is more of operator theory nature, the third of numerical analysis nature, and the second consists in bridging the two.
\begin{itemize}
\item As a step towards generalizing the work in~\cite{AmoBru96,BoGr99}, we obtain results on the conditioning of block Toeplitz matrix-sequences, by describing three regimes, uniformly bounded, polynomially unbounded, exponentially unbounded.
\item We rewrite the results in the language of the work~\cite{AmoBru96}, whose results have been used in the recent numerical analysis literature; see~\cite{FeFr26, FeFrLoPe25, FeGo25} and references therein.
\item As a step in the numerical direction, we apply the theoretical results to stability issues of the matrix-sequences arising from the IgA approximation with intermediate regularity of the linear wave and linear Schrödinger equations, focusing on a selection of representative cases rather than aiming at an exhaustive treatment. This generalizes the results in~\cite{FeFr26,FeGo25} which have been developed for the case of maximal regularity, that is, for $N=p-r=1$, and which corresponds to the standard Toeplitz setting.
\end{itemize}

The current work is organized as follows.
Section~\ref{sec:prelim} is devoted to preliminary results concerning block Toeplitz matrices of infinite and finite order and to the related matrix-sequences.
Section~\ref{sec:main} contains the main theoretical results regarding the asymptotic conditioning and distinguishing between the case where the generating function is nonsingular on the unit circle and the case where it is singular.
Section~\ref{sec:connection} represents a link between the results of the previous section and those used in the numerical analysis community.
In Section~\ref{sec:application} we deal with the stability issues of proper IgA approximations with intermediate regularity of the wave and Schrödinger equations: the section is accompanied by numerical experiments and by related comments.
Finally, in Section~\ref{sec:end}, we draw conclusions and we report a concise list of open problems.

\section{Preliminaries}\label{sec:prelim}

The present section is divided into three parts and contains preliminary results concerning block Toeplitz matrices and matrix-sequences.
In Subsections~\ref{ssec:prelim1} and~\ref{ssec:prelim2} the cases of infinite order and finite order are considered, respectively.
In Subsection~\ref{ssec:prelim3} the rational symbols setting is treated in detail.
Everything in this section is well known and can be found in the books~\cite{BoSi99,GoFe}, for example.

\subsection{Infinite block Toeplitz matrices}\label{ssec:prelim1}

For a positive integer $N$, a matrix $\mathbf{M}=(M_{j-k})_{j,k=0}^{\nf}$, where each $M_{k}$ is itself an $N\times N$ complex matrix, is called a {\em block Toeplitz matrix}.
See Figure~\ref{fg:Block}.

\begin{figure}[h]
\centering
\begin{tikzpicture}[scale=1]
\def\n{5}
\def\gap{0.1}
\def\size{1}
\foreach \i in {0,...,4} {
\foreach \j in {0,...,4} {
\pgfmathtruncatemacro{\d}{\i-\j}
\pgfmathsetmacro{\x}{\j*(\size+\gap)}
\pgfmathsetmacro{\y}{-\i*(\size+\gap)}
\draw (\x,\y) rectangle++(\size,-\size);
\pgfmathtruncatemacro{\last}{(\i==4)||(\j==4)}
\ifnum\last=1
\node at (\x+0.5*\size,\y-0.5*\size) {$\ddots$};
\else
\node at (\x+0.5*\size,\y-0.5*\size) {$M_{\d}$};
\fi}}
\node at (-0.6,-2.7) {$\mathbf{M}=$};
\end{tikzpicture}
\caption{The block Toeplitz matrix $\mathbf{M}$.
Each block $M_{k}$ belongs to $\bC^{N\times N}$ for some positive integer $N$.}
\label{fg:Block}
\end{figure}

Each block $M_{k}$ can be seen as an operator acting on the finite-dimensional space $\bC^{N}$.
Therefore, $\textbf{M}$ can be viewed as an operator acting on the $\bC^{N}$-valued $\ell^{2}$ space, that is, the space $\ell_{N}^{2}$ consisting of all sequences $(x_{0},x_{1},\ldots)$ with $x_{j}\in\bC^{N}$, equipped with the usual $\ell^{2}$ norm.

For a real number $1\le p\le\nf$, let $L_{N\times N}^{p}(\bT)$ denote the space of all matrix-valued functions $A:\bT\to\bC^{N\times N}$ whose entries belong to $L^{p}(\bT)$.
If there is a matrix-valued function $A\in L_{N\times N}^{1}(\bT)$ whose Fourier coefficients $\hat{A}_{k}$ coincide with $M_{k}$ for all $k\in\bZ$, that is, if
\[
M_{k}=\hat{A}_{k}:=\frac{1}{2\pi}\int_{0}^{2\pi}A(\e^{\mi\tht})\e^{-\mi k\tht}\d\tht\quad(k\in\bZ),
\]
then this matrix function is uniquely defined, it is called the {\em symbol} of $\textbf{M}$, and the matrix $\textbf{M}$ is denoted by $T(A)$.
It is well known that $T(A)$ is bounded on $\ell_{N}^{2}$ if and only if $A\in L_{N\times N}^{\nf}(\bT)$.
In that case
\[
\|T(A)\|=\|A\|_{\nf},
\]
where $\|A\|_{\nf}$ denotes the $C^{\ast}$-norm on $L_{N \times N}^{\nf}(\bT)$, that is, the essential supremum over $t\in\bT$ of the square root of the largest eigenvalue of the $N \times N$ matrix $A(t)^{\ast}A(t)$.

Let $R_{N\times N}(\bT)$ denote the set of all rational matrix-valued functions $A:\bT\to\bC^{N\times N}$ having no poles on~$\bT$.
An $A\in R_{N\times N}(\bT)$ can be viewed as a matrix-valued function with rational entries but also as the quotient $A(t)={S(t)}/{q(t)}$, where $S$ is a matrix-valued function with polynomial entries and $q$ is a scalar polynomial.
Hence, the condition of having no poles on~$\bT$ means that $q$ has no zeros there.
We denote by $GR_{N\times N}(\bT)$ the matrix-functions $A\in R_{N\times N}(\bT)$ for which $\det A(t)\ne 0$ on~$\bT$.
Clearly, if $A\in R_{N\times N}(\bT)$, then $A^{-1}: t \mapsto A^{-1}(t)$ also belongs to $R_{N\times N}(\bT)$.

A bounded linear operator $B$ acting on a Hilbert space $H$, $B\in\cB(H)$, is called {\em Fredholm} if its image $\im(B)$ is closed and
\[
\dim(\ker(B))=:\al<\nf,\qquad
\dim(H/\im(B))=:\be<\nf.
\]
In such a case, the number $\ind(B):=\al-\be$ is called the {\em index} of $B$.
The following is a classical result.
It was explicitly stated by Simonenko in~\cite{Si64} for the first time.

\begin{theorem}\label{th:Sim}
If $A$ belongs to $L_{N\times N}^{\nf}(\bT)$ and $T(A)$ is Fredholm on $\ell_{N}^{2}$, then $A$ is invertible in $L_{N\times N}^{\nf}(\bT)$.
\end{theorem}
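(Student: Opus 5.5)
We argue by contradiction, with a Weyl-sequence type construction. The mechanism is that a Fredholm operator is bounded below on a closed subspace of finite codimension. Concretely, if $T(A)$ is Fredholm on $\ell_{N}^{2}$, choose the subspace $X=\ker(T(A))^{\perp}$. Then $T(A)|_{X}$ is injective with closed range, so there is $c>0$ with $\|T(A)x\|\ge c\|x\|$ for all $x\in X$.

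Next we show that $A$ is pointwise bounded below, i.e.\ that $\|A(t)v\|\ge c\|v\|$ for almost every $t\in\bT$ and all $v\in\bC^{N}$. Suppose not. Then there are a unit vector $v$, an $\eps>0$ with $\eps<c$, and a set $E\subset\bT$ of positive measure with $\|A(t)v\|<\eps$ for $t\in E$; here we use a countable dense set of vectors $v$ to make this uniform. Put $f=\chi_{E}v\in L_{N}^{2}(\bT)$, so that $\|Af\|_{2}\le\eps\|f\|_{2}$.

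Transplant this to the Toeplitz setting by shifting. Let $f_{n}=z^{n}f$, let $P$ be the Riesz projection onto $H_{N}^{2}$, and set $g_{n}=Pf_{n}$. Since $\|(I-P)z^{n}f\|\to0$ as $n\to\infty$, we have $\|g_{n}\|\to\|f\|$. Moreover
\[
T(A)g_{n}=PAPz^{n}f=PAz^{n}f+o(1)=P z^{n}Af+o(1),
\]
hence $\limsup\|T(A)g_{n}\|\le\eps\|f\|$. Also $g_{n}\to0$ weakly, because $z^{n}f\to0$ weakly. The projection of $g_{n}$ onto the finite-dimensional space $\ker T(A)$ therefore tends to $0$ in norm. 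Write $g_{n}=k_{n}+x_{n}$ with $k_{n}\in\ker T(A)$ and $x_{n}\in X$. Then $\|x_{n}\|\to\|f\|$ and $T(A)x_{n}=T(A)g_{n}$. This gives $c\|f\|\le\eps\|f\|$, contradicting $\eps<c$.

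The same argument applied to the adjoint closes the proof. Since $T(A)^{\ast}=T(A^{\ast})$ is also Fredholm, $A^{\ast}$ is bounded below as well. For each matrix $A(t)$, being bounded below means injective, hence invertible, and then $\|A(t)^{-1}\|\le1/c$ almost everywhere. Thus $A^{-1}\in L_{N\times N}^{\nf}(\bT)$; measurability of $A^{-1}$ follows from Cramer's rule. In finite dimensions injectivity already suffices for invertibility, so the adjoint step is in fact redundant.

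The main obstacle is the measure-theoretic step producing a uniform set $E$ and vector $v$. I would handle it by taking $v$ from a countable dense subset of the unit sphere of $\bC^{N}$. Then $\{t:\inf_{\|v\|=1}\|A(t)v\|<\eps\}$ is the countable union over that subset of the sets $\{t:\|A(t)v\|<\eps\}$. If this union has positive measure, then one of the sets $\{t:\|A(t)v\|<\eps\}$ does, which gives the required $v$ and $E$.
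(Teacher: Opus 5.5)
Your proof is correct. Note that the paper gives no proof of this statement: it quotes it as a classical result of Simonenko~\cite{Si64}, so there is no in-paper argument to compare with.

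What you supply is the standard self-contained shift argument:
\begin{itemize}
\item identify $T(A)$ with $PAP$ on $H^{2}_{N}$;
\item use that $T(A)$ is bounded below by some $c>0$ on $(\ker T(A))^{\perp}$;
\item test this on the weakly null sequence $P(z^{n}f)$, on which $T(A)$ asymptotically acts like multiplication by $A$.
\end{itemize}
This shows that multiplication by $A$ is bounded below on $L^{2}_{N}(\bT)$. Equivalently, $s(t):=\min_{\|v\|=1}\|A(t)v\|\ge c$ almost everywhere. Since $N$ is finite, this gives invertibility with $\|A^{-1}\|_{\nf}\le 1/c$.

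Compared with the bare citation, your route buys two things. First, it uses only that $T(A)$ has finite-dimensional kernel and closed range, so it covers $\Phi_{+}$ operators; as you observe, no adjoint argument is needed. Second, it gives an explicit bound on $\|A^{-1}\|_{\nf}$ in terms of the lower bound of $T(A)$ on $(\ker T(A))^{\perp}$. Simonenko's result is set in a far more general framework (Riemann boundary problems with measurable coefficients in weighted $L^{p}$ spaces), which the paper does not need.

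The one step worth writing out is the choice of $\eps$. To conclude $s(t)\ge c$ a.e.\ you should note that $\{t: s(t)<c\}=\bigcup_{k}\{t: s(t)<c-1/k\}$ and apply your argument to each set. For invertibility in $L^{\nf}_{N\times N}(\bT)$, however, any single fixed $\eps\in(0,c)$ already suffices.
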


It follows that if $A\in R_{N\times N}(\bT)$ and $T(A)$ is Fredholm, then necessarily $A\in GR_{N\times N}(\bT)$.
It turns out that belonging to $GR_{N\times N}(\bT)$ is even sufficient for $T(A)$ to be Fredholm.
To determine the index and to decide whether the operator is invertible we need more information.
In this connection the following theorem, which appears, for example, as Theorem 2.9 in~\cite{LiSp87}, is of importance.

\begin{theorem}\label{th:WH}
Suppose $A\in GR_{N\times N}(\bT)$.
Then $A$ admits a Wiener-Hopf (WH) factorization, that is, there exist integers
\[
\ka_{1}\le\ka_{2}\le\cdots\le\ka_{N}
\]
such that
\[
A(t)=A_{-}(t)\diag(t^{\ka_{1}},\ldots,t^{\ka_{N}})A_{+}(t),
\]
where $A_{+}$ and $A_{-}$ are matrix-valued functions in $GR_{N \times N}(\bT)$, analytic and invertible inside and outside~$\bT$ (including the point at infinity), respectively.
\end{theorem}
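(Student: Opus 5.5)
We prove the existence of a Wiener--Hopf factorization for rational $A\in GR_{N\times N}(\bT)$ by reducing to the polynomial case and then using Smith-form type row and column operations over rings of functions analytic inside and outside $\bT$.

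\emph{Reduction to a matrix polynomial.} Write $A=S/q$ with $S$ a matrix polynomial and $q$ a scalar polynomial without zeros on $\bT$. Factor $q=q_{-}q_{+}t^{m}$, where $q_{+}$ collects the zeros outside the closed disk and $q_{-}$ the zeros in the open disk, normalized so that $q_{-}(t)t^{-\deg q_{-}}$ is analytic and invertible outside $\bT$, including $\infty$. The scalar factors $1/q_{\pm}$ are then invertible in the appropriate classes, up to a power of $t$, and can be absorbed into $A_{\pm}$ after shifting all $\ka_j$ by a common integer. Similarly, $S(t)$ may be multiplied by a power of $t$. It therefore suffices to factor a matrix polynomial $P(t)$ with $\det P\neq0$ on $\bT$.

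\emph{Splitting the zeros of the determinant.} Let $\det P(t)=c\prod(t-z_i)$, where the $z_i$ with $|z_i|<1$ are the inner zeros and those with $|z_i|>1$ are the outer zeros. We remove the inner zeros one at a time using factors that are invertible outside $\bT$. Suppose $z_0$ is an inner zero. Then $P(z_0)$ is singular, so there is a unimodular constant matrix $U$ such that the first row of $UP(z_0)$ vanishes. Hence $UP(t)=D(t)P_1(t)$ with $D=\diag\bigl((t-z_0),1,\ldots,1\bigr)$ and $P_1$ again a polynomial. Write $t-z_0=t(1-z_0/t)$, where $1-z_0/t$ is invertible outside $\bT$ including at $\infty$. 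Thus $D=D_{-}\diag(t,1,\ldots,1)$ with $D_{-}$ in the minus class. Iterating lowers $\deg\det$ by one each step, so we arrive at
\[
P=F_{-}\cdot\Lambda\cdot P_{k},
\]
where $\det P_k$ has only outer zeros, $F_{-}$ is a product of minus-invertible factors, and $\Lambda$ is a product of diagonal powers of $t$ interleaved with the other factors. At this point $P_k$ is a polynomial with nonvanishing determinant in the closed disk, so it is plus-invertible.

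\emph{Main obstacle: commuting the power-of-$t$ factors past the minus factors.} The factors $\diag(t,1,\ldots,1)$ appear between minus-invertible factors, and bringing them into a single diagonal middle factor is the real difficulty. This is the genuine content of the theorem and the source of the partial indices. The plan here is to work in the ring $R_{-}$ of rational functions analytic and nonvanishing outside, with the point at $\infty$ included. Given a product $G_{-}\,t^{\text{diag}}\,H_{+}$, we can use a Birkhoff/Gohberg--Krein reduction: apply elementary column operations with polynomial entries in $t$, which are plus-invertible, together with row operations with entries in $t^{-1}$, which are minus-invertible, to bring $A$ to the form $A_{-}\diag(t^{\ka_j})A_{+}$. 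Concretely, I would first put the matrix into row-reduced form with respect to degrees in $t$ (Wolovich's column-proper form). The leading coefficient matrix is then nonsingular, the column degrees give the $\ka_j$, and the remaining factor is invertible at $\infty$. Reordering the columns by permutation yields $\ka_1\le\cdots\le\ka_N$. Finally, I would check that all the factors stay in $GR_{N\times N}(\bT)$, which holds because they are rational with their poles located away from $\bT$.
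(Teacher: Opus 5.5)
The paper gives no proof of this theorem; it quotes it from \cite{LiSp87}, so your argument has to stand on its own. Step~1 is correct: you absorb $1/q$ into the outer factors and shift all indices by a common integer. Your left peeling of the inner zeros of $\det P$ is also correct, and column reduction is the right tool. But Step~3, which you yourself call the real content, has a genuine gap as written.

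First, your description mixes rows and columns. Row-reducing a polynomial matrix $Q$ in $t$ means $VQ=\diag(t^{d_1},\ldots,t^{d_N})L$ with $V$ unimodular. This gives $Q=V^{-1}\diag(t^{d_j})L$, which is plus$\cdot$diagonal$\cdot$minus, the opposite of the required order. You need column reduction, $QV=L\,\diag(t^{d_1},\ldots,t^{d_N})$.

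Second, and more seriously, column-reducedness only gives you that $L=QV\diag(t^{-d_j})$ is a polynomial in $t^{-1}$ that is invertible \emph{at} $\infty$. For $A_-$ you need invertibility on all of $1\le|t|\le\infty$. Since $\det L(t)=c\,t^{-\sum d_j}\det Q(t)$, this holds exactly when $\det Q$ has no zeros in $|t|\ge 1$. You never say which matrix $Q$ is reduced. If it is $A$ or $P$ (``to bring $A$ to the form''), then $L$ is singular at every zero of $\det P$ outside the disk, and the resulting factorization is not a WH factorization.

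The missing observation is that the interleaved factors never need to be commuted. The matrix
$F:=PP_k^{-1}=U_0^{-1}\diag(t-z_0,1,\ldots,1)\,U_1^{-1}\diag(t-z_1,1,\ldots,1)\cdots$
is itself a matrix polynomial in $t$, with $\det F=c\prod_j(t-z_j)$ and all $|z_j|<1$. Column-reduce it: $FV=L\,\diag(t^{d_1},\ldots,t^{d_N})$ with $V$ unimodular. Then:
\begin{itemize}
\item $L$ is a polynomial in $t^{-1}$;
\item $L(\infty)$ is the nonsingular highest-column-degree coefficient matrix;
\item $\det L(t)=c'\,t^{-\sum d_j}\prod_j(t-z_j)$ does not vanish for $1\le|t|<\infty$.
\end{itemize}
Hence $P=L\,\diag(t^{d_j})\,(V^{-1}P_k)$, where $V^{-1}P_k$ is a polynomial invertible on the closed disk. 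A column permutation then orders the $d_j$.

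With this, you can drop the row operations in $t^{-1}$ and the Birkhoff/Gohberg--Krein language. You should, however, state the lemma that every polynomial matrix $Q$ with $\det Q\not\equiv 0$ can be column-reduced by unimodular polynomial column operations, or prove it by the standard degree-lowering argument. The existence of the partial indices rests entirely on it.
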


The numbers $\ka_{j}$ are called {\em partial indices}.
The conditions on $A_\pm$ imply power series representations
\[
A_{-}(z)=\sum_{k=-\nf}^{0} B_{k} z^{k} \;\:(|z|\ge 1), \quad A_{+}(z)=\sum_{k=0}^{\nf} C_{k} z^{k} \quad (|z|\le 1)
\]
with invertible $B_{0},C_{0}$ and the inverses are given by analogous power series,
\[
A_{-}^{-1}(z)=\sum_{k=-\nf}^{0} D_{k} z^{k} \;\:(|z|\ge 1), \quad A_{+}^{-1}(z)=\sum_{k=0}^{\nf} E_{k} z^{k} \quad (|z|\le 1).
\]

\begin{theorem}\label{th:inv}
If $A\in GR_{N\times N}(\bT)$, then $T(A)$ is Fredholm with
\[
\ind(T(A))=-(\ka_{1}+\cdots+\ka_N),
\]
and $T(A)$ is invertible if and only if $\ka_{1}=\cdots=\ka_N=0$.
\end{theorem}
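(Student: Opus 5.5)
The plan is to use the Wiener--Hopf factorization of Theorem~\ref{th:WH} to write $T(A)$ as a product of three operators. Two of these are invertible, and the third is an elementary diagonal shift operator. The index is then read off from the diagonal factor, and invertibility is decided there as well.

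First, I would use the standard identity $T(FGH)=T(F)T(G)T(H)$. It holds whenever $F\in L^{\nf}_{N\times N}(\bT)$ has only nonpositive Fourier coefficients (anti-analytic) and $H$ has only nonnegative ones (analytic). This follows from $T(FG)=T(F)T(G)+H(F)H(\tilde G)$: the Hankel term $H(F)$ vanishes when $F$ is anti-analytic, and the analogous term vanishes when the right factor is analytic. Applied to the factorization $A=A_{-}DA_{+}$ with $D=\diag(t^{\ka_{1}},\ldots,t^{\ka_{N}})$, this gives
\[
T(A)=T(A_{-})\,T(D)\,T(A_{+}).
\]
The same identity shows that $T(A_{+})T(A_{+}^{-1})=T(A_{+}A_{+}^{-1})=I$ and $T(A_{+}^{-1})T(A_{+})=I$, because $A_{+}^{-1}$ is also analytic with the power series given after Theorem~\ref{th:WH}. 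The same argument applies to $A_{-}$. Hence $T(A_{\pm})$ are invertible, with $T(A_{\pm})^{-1}=T(A_{\pm}^{-1})$. This step uses the fact that $A_{\pm}^{\pm1}$ are bounded on $\bT$ (rational with no poles there), so all these Toeplitz operators are bounded.

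Second, $T(D)$ is the orthogonal direct sum of the scalar operators $T(t^{\ka_{j}})$ acting on the $j$th component of $\ell^{2}_{N}$. For $\ka\ge 0$, $T(t^{\ka})$ is an isometry (a shift) with trivial kernel and cokernel of dimension $\ka$. For $\ka<0$ it is the backward shift, which is surjective with kernel of dimension $|\ka|$. In both cases the operator is Fredholm with index $-\ka$. Therefore $T(D)$ is Fredholm with index $-(\ka_{1}+\cdots+\ka_{N})$. Since composing with invertible operators preserves Fredholmness and the index, $T(A)$ is Fredholm with the stated index.

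Third, $T(A)$ is invertible if and only if $T(D)$ is invertible. This holds if and only if every $T(t^{\ka_{j}})$ is invertible, and that forces $\ka_{j}=0$ for all $j$: for nonzero $\ka_{j}$ either the kernel or the cokernel is nontrivial. The only point needing care is the multiplicativity identity in the block setting. It is a routine entrywise check on Fourier coefficients: for anti-analytic $F$, $(FG)^{\wedge}_{j-k}=\sum_{m\ge 0}\hat F_{j-m}\hat G_{m-k}$ for $j\ge 0$, since $\hat F_{j-m}=0$ whenever $m<j$. It is the one step I would write out explicitly, paying attention to the order of the noncommuting matrix products.
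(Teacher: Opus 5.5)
Your proposal is correct and follows the route the paper itself indicates right after the theorem. The WH-factorization gives $T(A)=T(A_{-})T(D)T(A_{+})$ as in \eqref{WHO}; the outer factors are invertible with inverses $T(A_{\pm}^{-1})$; and both the index $-\sum_{j}\ka_{j}$ and the invertibility criterion are read off from $T(D)$. You supply the details the paper leaves implicit, namely the multiplicativity of $T$ for anti-analytic left and analytic right factors and the kernel/cokernel count for $T(t^{\ka})$, and both are done correctly, including the order of the matrix products.
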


The connection between the previous two theorems is that a WH-factorization of the symbol $A$ yields the factorization
\begin{equation}\label{WHO}
T(A)=T(A_{-})T(D)T(A_{+})\;\:\mbox{with}\;\:
D=\diag(t^{\ka_{1}},\ldots,t^{\ka_{N}})
\end{equation}
and that the (block-triangular) outer operators $T(A_\pm)$ are invertible, the inverses being $T(A_\pm^{-1})$.
We obtain in particular that
\[
\ind(T(A))=\ind(T(D))=-\sum_{j=1}^{N}\ka_{j},
\]
and that $T(A)$ is invertible if and only if $T(D)$ is invertible, which in turn happens if and only if $\ka_{j}=0$ for all $j$.

The following well known result provides an expression for the index of $T(A)$ that does not need knowledge of a WH-factorization.
Given a continuous function $\ph: \bT\to\bC \sm \{0\}$, we denote by $\wind(\ph)$ the winding number of this function about the origin.

\begin{theorem}\label{th:Fredholm}
If $A\in GR_{N\times N}(\bT)$, then $T(A)$ is Fredholm with
\[
\ind(T(A))=-\wind(\det A).
\]
\end{theorem}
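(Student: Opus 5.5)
The plan is to reduce Theorem~\ref{th:Fredholm} to Theorem~\ref{th:inv} via the WH-factorization of Theorem~\ref{th:WH}. Since $A\in GR_{N\times N}(\bT)$, we may write $A=A_{-}DA_{+}$ with $D=\diag(t^{\ka_{1}},\ldots,t^{\ka_{N}})$. Theorem~\ref{th:inv} already gives that $T(A)$ is Fredholm with $\ind(T(A))=-(\ka_{1}+\cdots+\ka_{N})$. It therefore suffices to show
\[
\wind(\det A)=\ka_{1}+\cdots+\ka_{N}.
\]

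Taking determinants pointwise on $\bT$ gives $\det A=\det A_{-}\cdot t^{\ka_{1}+\cdots+\ka_{N}}\cdot\det A_{+}$. Each factor is continuous and nonvanishing on $\bT$, and the winding number is additive under pointwise products of such functions. Clearly $\wind(t^{m})=m$, so the claim reduces to
\[
\wind(\det A_{+})=0,\qquad \wind(\det A_{-})=0.
\]

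For $A_{+}$ we use that $A_{+}$ is analytic and invertible in the closed unit disk. Hence $f:=\det A_{+}$ is analytic in a neighborhood of the closed disk (being rational without poles there) and has no zeros in $|z|\le 1$. By the argument principle, $\wind(f|_{\bT})$ equals the number of zeros of $f$ in the disk, which is $0$. Equivalently, $f(rt)$ for $r\in[0,1]$ is a homotopy in $\bC\sm\{0\}$ from $f|_{\bT}$ to the constant $f(0)=\det C_{0}\ne0$.

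For $A_{-}$ we argue symmetrically. The function $g:=\det A_{-}$ is analytic and nonvanishing for $|z|\ge1$, including $z=\infty$, where $g(\infty)=\det B_{0}\ne 0$. The homotopy $g(t/r)$, $r\in(0,1]$, extended by $g(\infty)$ at $r=0$, connects $g|_{\bT}$ to a nonzero constant inside $\bC\sm\{0\}$. Alternatively, apply the previous argument to $z\mapsto g(1/z)$, noting that inversion $t\mapsto \bar t$ only flips the sign of the winding number, so zero stays zero.

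Combining these gives $\wind(\det A)=\sum_{j}\ka_{j}$ and hence the formula. The only point needing care is the $A_{-}$ step: the point at infinity must be handled to ensure continuity of the homotopy at $r=0$. The hypothesis in Theorem~\ref{th:WH} that $A_{-}$ is analytic and invertible at infinity, with $B_{0}$ invertible, is exactly what this requires. Everything else is routine.
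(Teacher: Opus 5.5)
Your proof is correct. The paper gives no proof of its own here: it quotes the result as well known, with \cite{BoSi99,GoFe} as sources. Your argument is nonetheless the one implicit in the paragraph after Theorem~\ref{th:inv}, where $\ind(T(A))=-\sum_j\ka_j$ is read off from $T(A)=T(A_-)T(D)T(A_+)$.

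The only missing link is $\wind(\det A_{\pm})=0$, and your two homotopies handle it correctly. For $A_+$ you deform radially to the constant $\det C_0\neq 0$. For $A_-$ you pass through the point at infinity to $\det B_0\neq 0$, or equivalently use the substitution $z\mapsto 1/z$.

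For comparison, the classical Gohberg--Krein proof avoids factorization. It combines homotopy invariance of the Fredholm index with the fact that an invertible continuous matrix function can be deformed through invertible ones to $\diag(t^{m},1,\ldots,1)$ with $m=\wind(\det A)$. That gives the formula for all continuous symbols at once.

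Your route uses rationality only through the existence of the WH-factorization in Theorem~\ref{th:WH}. This is legitimate here: the paper takes Theorems~\ref{th:WH} and~\ref{th:inv} as given and derives the index of $T(A)$ from the factorization, not from the determinant formula, so there is no circularity.
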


For $A\in R_{N\times N}(\bT)$ we introduce the {\em associate symbol} $\ti{A}$ and the {\em conjugate transpose} symbol $A^{\ast}$ by
\[
\ti{A}(t):=A(1/t)=\sum_{k=-\nf}^{\nf} \hat{A}_{k} t^{-k}, \quad A^{\ast}(t)=A(t)^{\ast}=\sum_{k=-\nf}^{\nf} \hat{A}_{k}^{\ast}t^{-k}\quad (t\in\bT).
\]
In the scalar case, $N=1$, the matrix $T(\ti{A})$ is the transpose of $T(A)$ and hence invertible if and only if so is $T(A)$.
This is no longer true in general for $N>1$.
The partial indices of $\ti{A}$ are in no obvious way related to those of $A$.
However, it is easy to see that $T(A^{\ast})=T^{\ast}(A)$, that (therefore) $T(A)$ is invertible if and only if so is $T(A^{\ast})$, and that the partial indices of $A^{\ast}$ are the negatives of those of $A$.
Here is an example.
Let $A(t)=\begin{pmatrix} t & 1\\ 0 & t^{-1}\end{pmatrix}$ for $t\in\bT$.
We then have the following WH-factorizations:
\begin{eqnarray*}
& & A(t)=\begin{pmatrix} t & 1\\ 0 & t^{-1}\end{pmatrix}=
\begin{pmatrix} 1 & 0\\ t^{-1} &-1\end{pmatrix}\begin{pmatrix} 1 & 0\\ 0 & 1\end{pmatrix}\begin{pmatrix} t & 1\\ 1 & 0\end{pmatrix},\\
& & \ti{A}(t)=\begin{pmatrix} t^{-1} & 1\\ 0 & t\end{pmatrix}=
\begin{pmatrix} 1 & 0\\ 0 & 1\end{pmatrix}\begin{pmatrix} t^{-1} & 0\\ 0 & t\end{pmatrix}\begin{pmatrix} 1 & t\\ 0 & 1\end{pmatrix},\\
& & A^{\ast}(t)=\begin{pmatrix} t^{-1} & 0\\ 1 & t\end{pmatrix}=
\begin{pmatrix} t^{-1} & 1\\ 1 & 0\end{pmatrix}\begin{pmatrix} 1 & 0\\ 0 & 1\end{pmatrix}\begin{pmatrix} 1 & t\\ 0 &-1\end{pmatrix}.
\end{eqnarray*}
Thus, the partial indices of $A$ and $A^{\ast}$ are $0,0$, while those of $\ti{A}$ are $-1,1$.

\subsection{Finite block Toeplitz matrices}\label{ssec:prelim2}

We define the projections $P_{n}:\ell_{N}^{2}\to\ell_{N}^{2}$ by
\[
P_{n}(x_{0},x_{1},\ldots)=(x_{0},\ldots,x_{n-1},0,0,\ldots).
\]
A bounded linear operator $B$ acting on $\ell_{N}^{2}$ has a block-matrix representation $(B_{j,k})_{j,k=0}^{\nf}$ in the natural way.
The operator $P_{n}BP_{n}$, acting on $P_{n}\ell_{N}^{2}$, is then given by an $nN\times nN$ matrix, and we freely identify this operator and the matrix.
In the case where $B=T(A)$, we denote the $nN\times nN$ matrix $P_{n}T(A)P_{n}$ by $T_{n}(A)$.
This is a {\em finite block Toeplitz matrix}.

Our goal is to obtain asymptotic estimates for the condition numbers
\[
\ka(T_{n}(A)):=\|T_{n}(A)\|\,\|T_{n}^{-1}(A)\|
\]
of the matrices $T_{n}(A)$ with $A\in R_{N\times N}(\bT)$ as $n\to\nf$.
The norm on the right is the spectral norm ($=$ operator norm) on $\bC^{nN\times nN}$, we prefer writing $T_{n}^{-1}(A)$ instead of $T_{n}(A)^{-1}$ for the inverse, and we put $\ka(T_{n}(A))=\nf$ if $T_{n}(A)$ is not invertible.

The block-matrix sequence $\{T_{n}(A)\}_{n=1}^{\nf}$ is called {\em stable} if there exists a natural number $n_{0}$ such that all $T_{n}(A)$ with $n\ge n_{0}$ are invertible and the norms $\|T_{n}^{-1}(A)\|$ are uniformly bounded for $n\ge n_{0}$.

\begin{theorem}[Theorem VIII.5.3 in~\cite{GoFe}, Theorem 6.9 in~\cite{BoSi99}]\label{th:Stable}
Suppose that $A$ is a continuous matrix-valued function.
Then $\{T_{n}(A)\}_{n=1}^{\nf}$ is stable if and only if $T(A)$ and $T(\ti{A})$ are invertible.
\end{theorem}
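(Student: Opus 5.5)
The plan is the classical Gohberg--Feldman/Silbermann argument. It has two directions, both built on the identity
\[
T_n(AB)=T_n(A)T_n(B)+P_nH(A)H(\ti B)P_n+W_nH(\ti A)H(B)W_n,
\]
where $H(A)=(\hat A_{j+k+1})_{j,k\ge0}$ is the block Hankel operator and $W_n$ is the block flip on $P_n\ell^2_N$:
\[
W_n(x_0,\dots,x_{n-1},0,\dots)=(x_{n-1},\dots,x_0,0,\dots).
\]
Two elementary facts will be used throughout. First, $W_nT_n(A)W_n=T_n(\ti A)$, since flipping the block order replaces $\hat A_{j-k}$ by $\hat A_{k-j}$ without transposing the blocks. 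Second, $W_n\to0$ weakly, while $P_n\to I$ strongly.

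For necessity, I would assume $\|T_n^{-1}(A)\|\le M$ for $n\ge n_0$. For $x\in\ell^2_N$ we have $\|P_nx\|\le M\|T_n(A)P_nx\|$. Since $T_n(A)P_nx\to T(A)x$ (continuity gives boundedness of $T(A)$ and strong convergence), this yields $\|x\|\le M\|T(A)x\|$. The same argument for the adjoints $T_n(A)^*=T_n(A^*)$ gives $\|x\|\le M\|T(A^*)x\|$. Hence $T(A)$ is injective with closed range, and its range has trivial orthogonal complement, so $T(A)$ is invertible. Applying the same reasoning to $W_nT_n^{-1}(A)W_n=T_n^{-1}(\ti A)$, which is also bounded by $M$, gives invertibility of $T(\ti A)$.

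For sufficiency, the cleanest route is to approximate $A$ uniformly by trigonometric matrix polynomials, or simply to work in the Wiener algebra first, and to use that stability persists under small uniform perturbations: $\|T_n(A)-T_n(B)\|\le\|A-B\|_\infty$. The main step is to prove that if $T(A)$ and $T(\ti A)$ are invertible, then for large $n$ the matrix
\[
R_n:=P_nT^{-1}(A)P_n+W_nT^{-1}(\ti A)W_n-P_nT^{-1}(A)T(A)\cdots
\]
gives an approximate inverse. Concretely, I will follow the standard algebraic route: in the $C^*$-algebra $\mathcal S$ of bounded sequences $\{A_n\}$ modulo null sequences, consider the two homomorphisms $\{A_n\}\mapsto\text{s-}\lim A_n$ and $\{A_n\}\mapsto\text{s-}\lim W_nA_nW_n$. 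These send $\{T_n(A)\}$ to $T(A)$ and $T(\ti A)$ respectively, and they send a sequence $\{P_nKP_n+W_nLW_n+C_n\}$ with $K,L$ compact and $\|C_n\|\to0$ to $K$ and $L$. The Silbermann-type lifting theorem says that $\{T_n(A)\}$ is invertible modulo this ideal $\mathcal J$ if and only if its two images are invertible. To see this, invertibility of $T(A)$ and $T(\ti A)$ gives, via the product identity applied to $A\cdot A^{-1}$ (note $A$ is invertible in $C_{N\times N}$ by Theorem~\ref{th:Sim}), that
\[
T_n(A)T_n(A^{-1})=I-P_nH(A)H(\widetilde{A^{-1}})P_n-W_nH(\ti A)H(A^{-1})W_n,
\]
with compact Hankel terms because $A$ is continuous. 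Thus $\{T_n(A)\}$ is invertible modulo $\mathcal J$. Finally, a coset $\{I+P_nKP_n+W_nLW_n\}+o(1)$ whose strong limits $I+K$ and $I+L$ are invertible is itself stable. For this I would write down an explicit inverse using $(I+K)^{-1}$ and $(I+L)^{-1}$, and exploit that $P_nK W_n$ and $W_nLP_n$ tend to zero in norm, since compact operators composed with weakly null sequences converge in norm. Combining these facts yields uniform boundedness of $T_n^{-1}(A)$.

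The main obstacle is the last step: showing that the compact pieces at the two ends decouple. This requires the norm convergence $\|P_nKW_n\|\to0$ and $\|W_nKP_n\|\to0$ for compact $K$. I would prove it first for finite-rank $K$, using that $W_n^*=W_n\to0$ strongly on fixed vectors, and then pass to general compact $K$ by a density argument. Because the whole statement is quoted from~\cite{GoFe,BoSi99}, in the paper it suffices to cite it; the above is the plan I would follow to reprove it.
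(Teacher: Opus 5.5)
Your necessity argument is correct, and it needs only $A\in L^{\infty}_{N\times N}(\bT)$. The paper itself gives no proof and just cites \cite{GoFe,BoSi99}. The sufficiency half, however, has two genuine gaps.

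First, the lemma you single out as the main obstacle is false. $W_n$ is only weakly null, not strongly null on fixed vectors, since $\|W_nx\|=\|P_nx\|\to\|x\|$. Moreover $\|P_nKW_n\|\not\to0$ even for rank-one $K$: if $Kx=(x_0,0,0,\dots)$, then $P_nKW_n$ maps $(0,\dots,0,x_{n-1},0,\dots)$ to $(x_{n-1},0,\dots)$, so $\|P_nKW_n\|=1$ for all $n$. What the cross terms $P_nKP_n\,W_nL'W_n=P_nKW_nL'W_n$ of your explicit inverse really need is $\|K_1W_nK_2\|\to0$ for \emph{two} compact operators $K_1,K_2$. That statement is true: for rank-one operators it reduces to $\langle W_nu,v\rangle\to0$, and density does the rest.

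Second, and more seriously, your last step needs $I+K=T(A)T(A^{-1})$ and $I+L=T(\ti{A})T(\widetilde{A^{-1}})$ to be invertible, i.e.\ $T(A^{-1})$ and $T(\widetilde{A^{-1}})$ invertible, and you never show this. The identity giving ``invertibility modulo $\mathcal J$'' holds for every $A$ invertible in $C_{N\times N}$, so the hypotheses on $T(A)$ and $T(\ti{A})$ never actually enter your sufficiency proof. For $N=1$ the gap is harmless. For $N\ge2$, however, invertibility of $T(A)$ does not give that of $T(A^{-1})$. In the paper's example $A(t)=\begin{pmatrix} t&1\\0&t^{-1}\end{pmatrix}$, $T(A)$ is invertible, while $A^{-1}(t)=\begin{pmatrix} t^{-1}&-1\\0&t\end{pmatrix}$ gives a $T(A^{-1})$ whose first column vanishes. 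Under both hypotheses the claim is true: a canonical factorization $\ti{A}=C_-C_+$ yields $A^{-1}(t)=C_+(1/t)^{-1}C_-(1/t)^{-1}$, which is again of the form $A_-A_+$ with zero partial indices. But this requires factorization theory for continuous matrix functions, and it is precisely where $T(\ti{A})$ must be used.

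The cleanest repair avoids all this. Complete the approximate inverse you started, with the correct third term:
\[
R_n=P_nT^{-1}(A)P_n+W_nT^{-1}(\ti{A})W_n-T_n(A^{-1}).
\]
The operator $T^{-1}(A)-T(A^{-1})=T^{-1}(A)H(A)H(\widetilde{A^{-1}})$ is compact, and $P_nT(A)Q_nT(A^{-1})P_n=W_nH(\ti{A})H(A^{-1})W_n$. Hence $T_n(A)P_nT^{-1}(A)P_n=P_n-W_nH(\ti{A})H(A^{-1})W_n+o(1)$. The mirror formula for the second term follows via $T_n(A)W_n=W_nT_n(\ti{A})$. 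Combining both with your identity for $T_n(A)T_n(A^{-1})$ gives $\|T_n(A)R_n-P_n\|\to0$, hence $\|T_n^{-1}(A)\|\le2\|R_n\|$ for large $n$. This uses only compactness of Hankel operators with continuous symbols and $\|Q_nM\|\to0$ for compact $M$.
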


\subsection{Exponential decay of the Fourier coefficients}\label{ssec:prelim3}

A matrix-function $A\in R_{N \times N}(\bT)$ has no poles on~$\bT$ and hence also no poles in an open annular neighborhood of~$\bT$.
Thus, there exist numbers $0<r<1<R$ such that $A$ has no poles in the annulus $\{z\in\bC: r\le|z|\le R\}$ and, consequently, is analytic there.
Cauchy's theorem therefore gives
\[
\hat{A}_{j}=\frac{1}{2\pi \mi}\int_{|z|=R}\!A(z)\,\frac{\d z}{z^{j+1}}, \quad
\hat{A}_{-j}=\frac{1}{2\pi \mi}\int_{|z|=r}\! A(z)\,\frac{\d z}{z^{-j+1}}=\frac{1}{2\pi \mi}\int_{|z|=r}\! A(z)z^{j-1}\,\d z
\]
for $j\ge 0$.
Taking spectral norms in these equalities we get the following.

\begin{lemma}\label{lm:FourierBound}
Suppose that $A\in R_{N\times N}(\bT)$.
Then, for some $0<r<1<R$, all its entries have no poles in the annulus $\{z\in\bC: r\le|z|\le R\}$ and there exist constants $c_{1},c_{2}$ such that
\[
\|\hat{A}_{j}\|<\frac{c_{1}}{R^{j}}\quad\mbox{and}\quad
\|\hat{A}_{-j}\|<c_{2}r^{j},
\]
for all $j\ge 0$.
\end{lemma}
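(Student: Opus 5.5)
The plan is to turn the Cauchy integral representations written just before Lemma~\ref{lm:FourierBound} into norm estimates. We first fix the annulus. Write $A=S/q$ with $S$ a matrix polynomial and $q$ a scalar polynomial having no zeros on~$\bT$. Since $q$ has only finitely many zeros, there is a positive distance between $\bT$ and the zero set of $q$. Hence we can choose $0<r<1<R$ such that $q$ does not vanish on the closed annulus $K=\{z: r\le|z|\le R\}$. Then every entry of $A$ is analytic on an open neighbourhood of $K$, which gives the first assertion of the lemma. Because $K$ is compact and $A$ is continuous there, the quantity
\[
M:=\max_{z\in K}\|A(z)\|
\]
is finite, where $\|\cdot\|$ is the spectral norm.

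Next we justify the contour shift. The Fourier coefficient $\hat{A}_{k}$ equals $\frac{1}{2\pi\mi}\int_{|z|=1}A(z)z^{-k-1}\d z$, applied entrywise. The integrand $A(z)z^{-k-1}$ is analytic on a neighbourhood of $K$, since $0\notin K$. By Cauchy's theorem the contour $|z|=1$ may therefore be replaced by $|z|=R$ or by $|z|=r$. This yields exactly the two formulas displayed before the lemma.

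Now we estimate the two integrals. The spectral norm of a matrix-valued integral is bounded by the integral of the norm, because the norm is convex and continuous, or equivalently by testing against unit vectors. On $|z|=R$ this gives, for $j\ge 0$,
\[
\|\hat{A}_{j}\|\le\frac{1}{2\pi}\cdot 2\pi R\cdot M R^{-j-1}=\frac{M}{R^{j}}.
\]
On $|z|=r$ it gives
\[
\|\hat{A}_{-j}\|\le\frac{1}{2\pi}\cdot 2\pi r\cdot M r^{j-1}=Mr^{j}.
\]
The lemma asks for strict inequalities, so we take $c_{1}=c_{2}=M+1$.

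There is no real obstacle in this argument. The only points needing care are the choice of the closed annulus, so that $M$ is finite and the contour deformation is legitimate, and the passage of the spectral norm inside the integral.
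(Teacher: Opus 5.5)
Your proposal is correct and follows the paper's argument: pick an annulus $r\le|z|\le R$ free of poles, use Cauchy's theorem to move the Fourier integral to the circles $|z|=R$ and $|z|=r$, and bound the spectral norm of each integral. You simply make explicit what the paper leaves implicit, namely the finiteness of $M=\max_{r\le|z|\le R}\|A(z)\|$, the length-times-maximum estimate giving $MR^{-j}$ and $Mr^{j}$, and the choice $c_1=c_2=M+1$ to obtain the strict inequalities.
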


\section{Theoretical results}\label{sec:main}

This section contains the main theoretical results.

\subsection{Determinants without zeros on~$\bT$}\label{ssec:main1}

\begin{theorem}\label{th:low}
Suppose that $A\in GR_{N\times N}(\bT)$.
Then the condition numbers $\ka(T_{n}(A))$ are uniformly bounded as $n\to\nf$ if and only if $T(A)$ and $T(\ti{A})$ are invertible.
If at least one of the operators $T(A)$ or $T(\ti{A})$ fails to be invertible, then there exist positive constants $c,\al$, independent of $n$, such that
\[
\ka(T_{n}(A))\ge c\e^{\al n},
\]
for all $n\ge 1$.
\end{theorem}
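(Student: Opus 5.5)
The first assertion follows from Theorem~\ref{th:Stable} together with the trivial bound $\|T_{n}(A)\|\le\|T(A)\|=\|A\|_{\nf}$. If $T(A)$ and $T(\ti{A})$ are invertible, then the sequence is stable, so $\|T_{n}^{-1}(A)\|$ is bounded and hence $\ka(T_{n}(A))$ is bounded. Conversely, suppose $\ka(T_{n}(A))$ is bounded. Since $\|T_{n}(A)\|\ge\|\hat{A}_{0}\|$ does not help when $\hat A_0=0$, I will instead use $\|T_{n}(A)\|\to\|A\|_{\nf}>0$, which holds because $P_{n}T(A)P_{n}\to T(A)$ strongly. Then $\|T_{n}^{-1}(A)\|$ is bounded for large $n$, so the sequence is stable. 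Theorem~\ref{th:Stable} then gives the invertibility of both operators. It therefore suffices to prove the exponential lower bound when one of the two operators is not invertible.

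For the lower bound I will produce unit-type vectors $x_{n}\in\bC^{nN}$ with $\|T_{n}(A)x_{n}\|\le Ce^{-\al n}\|x_{n}\|$. This gives $\|T_{n}^{-1}(A)\|\ge C^{-1}e^{\al n}$, and combined with $\|T_{n}(A)\|\ge c_{0}>0$ for all $n$ it yields the claim. The bound $\|T_{n}(A)\|\ge c_{0}$ comes from the convergence noted above, and for the finitely many small $n$ it only requires adjusting the constant. Where $T_{n}(A)$ is singular, $\ka=\nf$ and the bound is trivial.

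\emph{Case 1: $T(A)$ is not invertible.} By Theorems~\ref{th:WH} and~\ref{th:inv}, $T(A)$ is Fredholm and some partial index is nonzero. If some $\ka_{j}>0$, then $T(D)$ has a cokernel, so $T(A)^{\ast}=T(A^{\ast})$ has a nontrivial kernel. If some $\ka_{j}<0$, then $T(D)$ has a kernel vector $e_{j}$ in the $j$-th block component at position $0$. In that case $y:=T(A_{+}^{-1})(e_{j}\text{ at position }0)$, whose blocks are $E_{k}e_{j}$, lies in $\ker T(A)$ by \eqref{WHO}. Since $A_{+}^{-1}$ is rational and analytic in a disc of radius larger than $1$, Lemma~\ref{lm:FourierBound} (or the Cauchy estimate) gives $\|E_{k}\|\le c\rho^{-k}$ with $\rho>1$. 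Thus the kernel element $y$ decays exponentially. Take $x_{n}=P_{n}y$. Then
\[
T_{n}(A)x_{n}=P_{n}T(A)y-P_{n}T(A)(I-P_{n})y=-P_{n}T(A)Q_{n}y ,
\]
with $Q_{n}=I-P_{n}$, so $\|T_{n}(A)x_{n}\|\le\|A\|_{\nf}\|Q_{n}y\|\le Ce^{-\al n}$, while $\|x_{n}\|\to\|y\|>0$. If instead the kernel lies in $T(A^{\ast})$, the same argument applied to $A^{\ast}$ (its kernel elements are again exponentially decaying, being built from the rational factors) gives $\|T_{n}(A)^{\ast}x_{n}\|\le Ce^{-\al n}$. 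Since $T_{n}(A)^{\ast}=T_{n}(A^{\ast})$ and $\|T_{n}^{-1}\|=\|(T_{n}^{\ast})^{-1}\|$, this suffices.

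\emph{Case 2: $T(\ti{A})$ is not invertible.} Let $W_{n}$ be the block flip $(x_{0},\dots,x_{n-1})\mapsto(x_{n-1},\dots,x_{0})$. Then $W_{n}T_{n}(A)W_{n}=T_{n}(\ti{A})$, and $W_{n}$ is unitary. Hence $\ka(T_{n}(A))=\ka(T_{n}(\ti{A}))$, and since $\ti{A}\in GR_{N\times N}(\bT)$, Case~1 applies to $\ti{A}$.

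The main obstacle I anticipate is verifying the exponential decay of the kernel vectors, including which of $T(A_{\pm}^{\pm1})$ must be applied in the cokernel case. Rationality gives analyticity of all Wiener--Hopf factors in an annulus, so Lemma~\ref{lm:FourierBound}-type estimates should settle it. In the cokernel case I would simply pass to $A^{\ast}$, whose partial indices are the negatives of those of $A$, and reuse the kernel construction.
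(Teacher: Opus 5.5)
Your proposal is correct and follows essentially the same route as the paper. Both arguments use the same steps: stability via Theorem~\ref{th:Stable} together with $\|T_{n}(A)\|\le\|A\|_{\nf}$ for the bounded case; the exponentially decaying kernel vector $T(A_{+}^{-1})e_{j}$ attached to a negative partial index, truncated by $P_{n}$; passage to $A^{\ast}$ when all partial indices are nonnegative; and the block flip $W_{n}$ to reduce the $T(\ti{A})$ case to the $T(A)$ case. The only minor difference is that you prove the ``only if'' half of the first assertion directly, from the converse part of Theorem~\ref{th:Stable} and $\|T_{n}(A)\|\to\|A\|_{\nf}>0$, whereas the paper lets it follow from the exponential lower bound.
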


\begin{proof}
Let both $T(A)$ and $T(\ti{A})$ be invertible.
Since $A$ is continuous, Theorem~\ref{th:Stable} tells us that there exist positive constants $c,n_{0}$ such that $\|T_{n}^{-1}(A)\|\le c$ for all $n\ge n_{0}$.
Since the finite sections $T_{n}(A)$ are compressions of $T(A)$, we have
\[
\|T_{n}(A)\|\le\|T(A)\|=\|A\|_{\nf}.
\]
Consequently, $\ka(T_{n}(A))\le c\|A\|_{\nf}$ for every $n\ge n_{0}$, which proves the first part.

We continue with the second part.
Assume first that $T(A)$ is not invertible.
From Theorems~\ref{th:WH} and~\ref{th:inv} we know that $A$ admits a WH-factorization and that at least one of the partial indices $\ka_{1}\le\ka_{2}\le\cdots\le\ka_{N}$ is nonzero.
Suppose that $\ka_{1}<0$.
Let $e_{1}\in\ell_{N}^{2}$ be the sequence $\{x_{0},x_{1},\ldots\}\in\ell_{N}^{2}$ given by $x_{0}=(1,0,\ldots,0)^\top$ and $x_{k}=0$ for $k\ge 1$.
The vector $\xi=T(A_{+}^{-1})e_{1}$ belongs to the kernel of $T(A)$.
Indeed, from~(\ref{WHO}) we infer that $T(A)\xi=T(A_{-})T(D)e_{1}$.
The Fourier series of $D$ is
\[
D(t)=D_{1} t^{\ka_{1}}+D_{2} t^{\ka_{2}}+\cdots+D_N t^{\ka_N} \quad (t\in\bT)
\]
where $D_{j}$ is the diagonal matrix whose $j,j$ entry is $1$ and the other entries of which are zero.
The nonzero entries of the first column of the block Toeplitz matrix $T(D)$ come from the nonzero entries of the first columns of the matrices $D_{j}$ with $\ka_{j}\ge 0$.
Since $\ka_{1}<0$, we can exclude $D_{1}$.
As the first columns of the remaining matrices $D_{2}, \ldots, D_N$
are all zero, it follows that the first column of $T(D)$ is zero as well, which gives $T(D)e_{1}=0$, as claimed.

Let $Q_{n}:=I-P_{n}$, i.e., $Q_{n}: \{x_{0},x_{1},\ldots\}\mapsto \{0, \ldots,0, x_{n}, x_{n+1}, \ldots\}$.
We have
\[
0=P_{n}T(A)\xi=P_{n}T(A)P_{n}\xi+P_{n}T(A)Q_{n}\xi,
\]
or equivalently, $T_{n}(A)\xi_{n}=-P_{n}T(A)Q_{n}\xi$ for $\xi_{n}=P_{n}\xi$.
From Lemma~\ref{lm:FourierBound} we infer that there is some $R>1$ such that
\[
\|Q_{n}\xi\|^{2}=\sum_{k\ge n}\|\xi_{k}\|^{2}\le\sum_{k\ge n}\frac{c_{1}^{2}}{R^{2k}}=\frac{c_{1}^{2}R^{2}}{R^{2}-1}\,\frac{1}{R^{2n}},
\]
implying that $\|Q_{n}\xi\|\le C\e^{-\al n}$ with some constant $C$ and with $\al=\log R>0$.
Thus,
\[
\|T_{n}(A)\xi_{n}\|=\|P_{n} T(A)Q_{n}\xi\|\le\|P_{n} T(A)\|\,\|Q_{n}\xi\|\le C\|A\|_{\nf} \e^{-\al n}.
\]
Since
\[
\|T_{n}^{-1}(A)\|=
\sup\{|T_{n}^{-1}(A)x|:|x|=1\}=
\frac{1}{\inf\{|T_{n}(A)x|:|x|=1\}},
\]
it follows that
\[
\|T_{n}^{-1}(A)\|\ge\frac{1}{\|T_{n}(A)\xi_{n}\|/\|\xi_{n}\|}\ge\frac{\|\xi_{n}\|}{C\|A\|_{\nf}}\e^{\al n}.
\]
As $\xi_{n}=P_{n} \xi\to\xi$ and $T_{n}(A)=P_{n}T(A)P_{n}$ converges strongly to $T(A)$, there is an $n_{0}$ such that $\|\xi_{n}\|\ge\|\xi\|/2$ and $\|T_{n}(A)\|\ge\|A\|_{\nf}/2$.
Consequently, for $n\ge n_{0}$ we obtain
\[
\ka(T_{n}(A))=\|T_{n}(A)\|\,\|T_{n}^{-1}(A)\|\ge\frac{\|\xi\|}{4C}\e^{\al n}.
\]
The restriction $n\ge n_{0}$ can be replaced with $n\ge 1$ by adjusting the constants.

To prove the case in which all partial indices of $A$ are non-negative, we work with $A^{\ast}$ instead of $A$, obtaining an operator $T_{n}(A^{\ast})$ with the same condition numbers as $T_{n}(A)$ and whose symbol $A^{\ast}$ has at least one negative partial index.

Finally, the remaining case in which $T(A)$ is invertible while $T(\ti{A})$ is not invertible can be reduced to the previous case simply by using the identity
\[
W_{n} T_{n}(A)W_{n}=T_{n}(\ti{A}),
\]
where $W_{n}$ is given by $W_{n}: \{x_{0}, x_{1}, \ldots\}\mapsto \{x_{n-1}, \ldots ,x_{0},0,0, \ldots\}$.
This identity implies that $\ka(T_{n}(A))=\ka(T_{n}(\ti{A}))$.
\end{proof}

The following result provides an upper bound for $\|T_{n}^{-1}(A)\|$.
Its proof is analogous to that of Theorem 1.3 in~\cite{BoGr99}, and we therefore present it without proof.

\begin{theorem}\label{th:up}
Suppose that $A\in GR_{N\times N}(\bT)$ and that at least one of the operators $T(A)$ or $T(\ti{A})$ is not invertible.
Then
\[
\|T_{n}^{-1}(A)\|\le\frac{e^{\beta n}}{|\det T_{n}(A)|}
\]
with some constant $\beta>0$.
\end{theorem}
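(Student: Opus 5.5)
The plan is to use Cramer's rule (the adjugate formula) and to bound every cofactor of $T_{n}(A)$ by Hadamard's inequality. We may assume $\det T_{n}(A)\ne 0$, since otherwise the right-hand side is $+\infty$ and there is nothing to prove. Then
\[
T_{n}^{-1}(A)=\frac{\operatorname{adj} T_{n}(A)}{\det T_{n}(A)},
\]
so it suffices to show that $\|\operatorname{adj}T_{n}(A)\|\le \e^{\be n}$ for some $\be>0$ independent of $n$. The hypothesis that $T(A)$ or $T(\ti{A})$ is not invertible plays no role in the inequality itself. It only explains why the bound is meaningful: in that case $|\det T_{n}(A)|$ may be exponentially small, which is consistent with Theorem~\ref{th:low}.

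\textbf{Step 1: column norms.} Let $v_{k}$ denote the $k$-th column of $T_{n}(A)$, $1\le k\le nN$. It is a subvector of a column of the infinite block column $(\hat{A}_{j-m})_{j\ge 0}$. By Lemma~\ref{lm:FourierBound},
\[
\|v_{k}\|^{2}\le \sum_{j\in\bZ}\|\hat{A}_{j}\|_{F}^{2}\le N\Bigl(\sum_{j\ge 0} c_{1}^{2}R^{-2j}+\sum_{j\ge 1}c_{2}^{2}r^{2j}\Bigr)=:M^{2}.
\]
The constant $M$ does not depend on $n$ or $k$. Alternatively one can use $\|v_{k}\|\le\|T(A)\|=\|A\|_{\nf}$ directly, which is simpler still.

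\textbf{Step 2: cofactors.} Each cofactor is, up to sign, the determinant of an $(nN-1)\times(nN-1)$ submatrix obtained by deleting one row and one column. The columns of this submatrix are truncations of the $v_{k}$, so their norms are still at most $M$. Hadamard's inequality gives
\[
|\text{cofactor}|\le M^{nN-1}.
\]

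\textbf{Step 3: the adjugate.} Bounding the spectral norm by the Frobenius norm,
\[
\|\operatorname{adj}T_{n}(A)\|\le nN\,\max|\text{cofactor}|\le nN\max(M,1)^{nN}\le \e^{\be n}
\]
for a suitable $\be>0$ and all $n\ge1$. For instance, $\be=N\log\max(M,\e)+N$ works, since $n\le \e^{n}$ absorbs the polynomial factor. This completes the argument.

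The main obstacle is only ensuring uniformity in $n$: the constant $M$ must not grow with $n$. Lemma~\ref{lm:FourierBound}, or simply the bound $\|T_{n}(A)\|\le\|A\|_{\nf}$, secures this. If one wanted a sharper $\be$, the natural next approach would mimic the proof of Theorem 1.3 in~\cite{BoGr99}, expressing the entries of the inverse via determinants of finite sections of related symbols. This is not needed for the crude statement.
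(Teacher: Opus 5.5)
Your proof is correct. The column bound $\|v_k\|\le\|T_n(A)\|\le\|A\|_{\infty}$ (or the one from Lemma~\ref{lm:FourierBound}) is uniform in $n$. Hadamard's inequality then bounds every cofactor by $M^{nN-1}$, and the factor $nN$ from passing through the Frobenius norm is absorbed into $\e^{\be n}$. You are also right that the non-invertibility hypothesis plays no role in the inequality; it only explains why the bound is of interest.

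The paper does not prove this statement. It only says the proof is analogous to Theorem~1.3 of~\cite{BoGr99}, so your Cramer--Hadamard argument supplies a self-contained proof.

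An even shorter route avoids cofactors altogether. Let $s_1\ge\cdots\ge s_{nN}>0$ be the singular values of $T_n(A)$. Then $|\det T_n(A)|=s_1\cdots s_{nN}$ and $\|T_n^{-1}(A)\|=1/s_{nN}$, whence
\[
\|T_n^{-1}(A)\|=\frac{s_1\cdots s_{nN-1}}{|\det T_n(A)|}\le\frac{\|A\|_{\infty}^{nN-1}}{|\det T_n(A)|}.
\]
This gives the claim with $\be=N\log\max(\|A\|_{\infty},\e)$ and no polynomial factor. It also shows that $\|\operatorname{adj}T_n(A)\|=s_1\cdots s_{nN-1}$ exactly, so your Step~3 loses only the harmless factor $nN$. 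In return, your version bounds each entry of the inverse individually, which is the form one would need to refine the estimate through explicit formulas for the minors.
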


The upper bound in Theorem~\ref{th:up} depends on $\det T_{n}(A)$.
If all eigenvalues of the matrix $T_{n}(A)$ are ``small'', then its determinant will also be small, making this upper bound large.
In fact, in~\cite{BoSi99} scalar examples are presented in which the condition numbers grow faster than any predetermined growth rate.
On the other hand, if all eigenvalues are uniformly bounded away from zero, then this upper bound becomes exponential.
To show this, we need to introduce the concept of the {\em limit set}.

Let $\sp T_{n}(A)$ denote the spectrum of $T_{n}(A)$, that is, the set of its eigenvalues.
The set
\[
\La(A):=\limsup_{n\to\nf}\sp T_{n}(A),
\]
which is the collection of all partial limits of the sequence $\{\sp T_{n}(A)\}_{n\ge1}$, is called the {\em limit set} of $A$.

\begin{theorem}\label{th:double}
Suppose that $A\in GR_{N\times N}(\bT)$ and that at least one of the operators $T(A)$ or $T(\ti{A})$ is not invertible.
Then, if $0$ does not belong to $\La(A)$, there exist constants $d_{1},d_{2}>0$ and $\al,\be>0$ such that
\[
d_{1}\e^{\al n}\le\ka(T_{n}(A))\le d_{2}\e^{\be n}
\]
for sufficiently large $n$.
\end{theorem}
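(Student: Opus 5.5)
The lower bound is immediate: under the hypothesis that $T(A)$ or $T(\ti{A})$ is not invertible, Theorem~\ref{th:low} already provides $c,\al>0$ with $\ka(T_{n}(A))\ge c\e^{\al n}$ for all $n\ge1$. So the whole task is the upper bound. We will combine $\|T_{n}(A)\|\le\|A\|_{\nf}$ with Theorem~\ref{th:up}, which gives
\[
\ka(T_{n}(A))\le\|A\|_{\nf}\,\frac{\e^{\be_{0} n}}{|\det T_{n}(A)|}.
\]
It then suffices to show that $|\det T_{n}(A)|\ge \e^{-\ga n}$ for some $\ga\ge 0$ and all sufficiently large $n$. This is where the assumption $0\notin\La(A)$ enters.

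To obtain this, write $\det T_{n}(A)=\prod_{j=1}^{nN}\la_{j}^{(n)}$, where $\la_{j}^{(n)}$ are the eigenvalues of $T_{n}(A)$.

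First, we claim there is $\de>0$ and $n_{1}$ such that $\sp T_{n}(A)\cap\{|\la|<\de\}=\emptyset$ for $n\ge n_{1}$. Suppose not. Then there are $n_{k}\to\nf$ and eigenvalues $\la_{k}\in\sp T_{n_{k}}(A)$ with $\la_{k}\to 0$, so $0$ is a partial limit of $\{\sp T_{n}(A)\}$. This contradicts $0\notin\La(A)$, since $\La(A)$ is by definition the set of all such partial limits.

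Hence $|\la_{j}^{(n)}|\ge\de$ for all $j$ and all $n\ge n_{1}$. Without loss of generality $\de<1$, and we get
\[
|\det T_{n}(A)|\ge\de^{nN}=\e^{-nN\log(1/\de)}.
\]
Plugging this in yields $\ka(T_{n}(A))\le\|A\|_{\nf}\e^{(\be_{0}+N\log(1/\de))n}$. We may then take $\be=\be_{0}+N\log(1/\de)$ and $d_{2}=\|A\|_{\nf}$, with $d_{1}=c$ from Theorem~\ref{th:low}.

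The only delicate point is the compactness step, i.e.\ converting $0\notin\La(A)$ into a uniform spectral gap around $0$ for large $n$. The eigenvalues also lie in the bounded disc $|\la|\le\|A\|_{\nf}$, but this is not even needed, since the argument by contradiction only uses sequences tending to $0$.

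Everything else rests on Theorem~\ref{th:up}, whose proof is deferred to~\cite{BoGr99}. The counting of eigenvalues with algebraic multiplicity in the determinant (there are $nN$ of them) is routine and only affects the constant $\be$.
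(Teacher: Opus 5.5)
Your proposal is correct and follows the paper's proof essentially step for step. The lower bound comes from Theorem~\ref{th:low}, and the upper bound from Theorem~\ref{th:up} together with $\|T_{n}(A)\|\le\|A\|_{\nf}$ and $|\det T_{n}(A)|\ge\de^{nN}$, where $\de$ is a uniform spectral gap around $0$. The only difference is how the gap is chosen: the paper takes $\de=\dist(0,\La(A))/2$, which tacitly uses the uniform boundedness of the spectra, whereas your contradiction argument produces a suitable $\de$ directly.
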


\begin{proof}
The lower bound follows from Theorem~\ref{th:low}.

Let $\eps:=\dist(0,\La(A))>0$ and let $\{\la_{n,j}\}_{j=1}^{nN}$ be the eigenvalues of $T_{n}(A)$.
It follows that there exists an~$n_{0}$ such that $|\la_{n,j}|\ge\eps/2$ for $n>n_{0}$.
Since $\det T_{n}(A)=\prod_{j=1}^{nN}\la_{n,j},$ we obtain $|\det T_{n}(A)|\ge({\eps}/{2})^{nN}$.
Combining this with Theorem~\ref{th:up} and using that $\|T_{n}(A)\|\le\|T(A)\|=\|A\|_{\nf}$, we get the upper bound.
\end{proof}

In the scalar case $N=1$, Day's papers~\cite{Da75a,Da75b} provide a constructive description of the limit set $\La(A)$.
Unfortunately, such a description is not available in the block case $N>1$, which somewhat reduces the significance of Theorem~\ref{th:double}.
However, in special cases, such as diagonal block Toeplitz matrices $T_{n}(A)$ with $A\in R_{N\times N}(\bT)$, such a description can be derived.

\subsection{Determinants with zeros on~$\bT$}\label{ssec:main2}

Theorems~\ref{th:Sim} and~\ref{th:Stable} imply the following, which we want to record at the very beginning.

\begin{theorem}\label{th:limsup}
If $A\in R_{N\times N}(\bT)$ and $\det A$ has zeros on~$\bT$, then
\[
\limsup_{n\to\nf}\ka(T_{n}(A))=\nf.
\]
\end{theorem}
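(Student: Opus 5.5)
Argue by contradiction, combining Theorems~\ref{th:Sim} and~\ref{th:Stable}. Suppose that $\limsup_{n\to\nf}\ka(T_{n}(A))<\nf$. Then there are $n_{0}$ and $M$ such that for all $n\ge n_{0}$ the matrices $T_{n}(A)$ are invertible and $\ka(T_{n}(A))\le M$. To pass from a bound on the condition numbers to a bound on the inverses, we need $\|T_{n}(A)\|$ to be bounded away from zero. Since $T_{n}(A)=P_{n}T(A)P_{n}$ converges strongly to $T(A)$, lower semicontinuity of the norm under strong convergence gives $\liminf\|T_{n}(A)\|\ge\|T(A)\|=\|A\|_{\nf}$. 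Alternatively, $\|T_{n}(A)\|$ is nondecreasing in $n$ because $T_{n}(A)$ is a compression of $T_{n+1}(A)$.

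If $A\equiv 0$, then every $T_{n}(A)$ is singular, $\ka=\nf$, and there is nothing to prove. Otherwise $\|T_{n}(A)\|\ge \|A\|_{\nf}/2>0$ for large $n$, so
\[
\|T_{n}^{-1}(A)\|=\frac{\ka(T_{n}(A))}{\|T_{n}(A)\|}\le\frac{2M}{\|A\|_{\nf}}
\]
for all sufficiently large $n$. Hence $\{T_{n}(A)\}$ is stable.

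Since $A\in R_{N\times N}(\bT)$ has no poles on $\bT$, it is continuous, so Theorem~\ref{th:Stable} applies and shows that $T(A)$ (and $T(\ti{A})$) is invertible, in particular Fredholm. Theorem~\ref{th:Sim} then gives that $A$ is invertible in $L^{\nf}_{N\times N}(\bT)$. By continuity of $A$, this forces $\det A(t)\ne0$ for every $t\in\bT$: if $\det A(t_{0})=0$, then near $t_{0}$ the smallest singular value of $A(t)$ tends to zero, so $\|A^{-1}(t)\|$ is not essentially bounded. This contradicts the hypothesis that $\det A$ has zeros on $\bT$.

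The only point requiring care is the step from a bounded condition number to stability, that is, excluding $\|T_{n}(A)\|\to0$. The monotonicity or strong-convergence argument above settles it. Everything else is a direct citation of the two earlier theorems.
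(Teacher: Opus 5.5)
Your proof is correct and follows the paper's route: the paper simply records that the statement follows from Theorems~\ref{th:Sim} and~\ref{th:Stable}. Your argument supplies the two details the paper leaves implicit, namely that a bounded condition number gives stability because $\|T_{n}(A)\|$ is nondecreasing and eventually positive, and that invertibility of $A$ in $L^{\nf}_{N\times N}(\bT)$ together with continuity forces $\det A(t)\ne 0$ everywhere on $\bT$.
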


Thus, the only question that remains in the situation at hand is the question about the rate of the growth of the condition numbers.
This question is difficult and our insights are accordingly modest.
To get an idea of what happens, we cite the following scalar case result, which was established in~\cite{AmoBru96} for banded Toeplitz matrices and in~\cite{BoGr99} as stated here.
Given two sequences $\{u_{n}\}, \{v_{n}\}$ of positive real numbers, we write $u_{n} \asymp v_{n}$ if there are positive constants $c,d$ such that $cu_{n}\le v_{n}\le du_{n}$ for all~$n$.

\begin{theorem}\label{th:RFact}
Let $a$ be a rational function without poles and with only one zero $t_{0}$ on~$\bT$.
Suppose that we can write
\[
a(t)=(t-t_{0})^{\be}t^{k}\tht(t)\quad(t\in\bT),
\]
where $\be\in\bN$, $k\in\bZ$, and $\tht$ is a rational function without poles or zeros on~$\bT$ whose winding number about the origin is zero.
Consider the interval $J=[-\be,0]$.
Then
\begin{eqnarray*}
\ka(T_{n}(a))&\asymp&n^{\be}\quad\mbox{if}\quad k\in J,\\
\ka(T_{n}(a))&\ge&c\e^{\al n}\quad\mbox{if}\quad k\notin J,
\end{eqnarray*}
for some positive constants $c,\al>0$.
\end{theorem}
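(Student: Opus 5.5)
Write $a(t)=t^{k}b(t)$ with $b(t)=(t-t_{0})^{\be}\tht(t)$, and treat the two regimes separately. Because $\tht$ has no zeros on $\bT$ and winding number zero, it has a scalar Wiener--Hopf factorization $\tht=\tht_{-}\tht_{+}$ with rational factors $\tht_{\pm}$ that are invertible in the respective closed half-planes. This makes $T(\tht)$ and $T(\ti\tht)$ invertible, so $\{T_{n}(\tht)\}$ is stable by Theorem~\ref{th:Stable}.

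\textbf{Case $k\in J$.} Split $(t-t_{0})^{\be}$ into $k$-dependent pieces of opposite analyticity:
\[
t^{k}(t-t_{0})^{\be}=(-t_0)^{-k}\,(t-t_{0})^{\be+k}(1-t_{0}/t)^{-k}\cdot\text{const},
\]
using $t^{-1}(t-t_{0})=1-t_{0}/t$. Thus $a=c\,\tht_{-}\,(1-t_{0}/t)^{m_{-}}\,(t-t_{0})^{m_{+}}\,\tht_{+}$ with $m_{-}=-k\ge0$, $m_{+}=\be+k\ge0$ and $m_{-}+m_{+}=\be$.
\begin{itemize}
\item \emph{Upper bound.} I would handle the finite sections with the standard approach for symbols with zeros on $\bT$, as in~\cite{BoGr99}. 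The principal part of $T_{n}(a)$ is $T_{n}$ of a product of a minus-function and a plus-function; its inverse is approximately $T_{n}(\text{plus}^{-1})T_{n}(\text{minus}^{-1})$, corrected by a finite-rank term. The inverse of the bidiagonal matrix $T_{n}(t-t_{0})$ is triangular with unimodular entries, so its norm is $\asymp n$. Similarly $\|T_{n}((t-t_{0})^{m})^{-1}\|\asymp n^{m}$, and the same holds for the minus factor. This gives $\|T_{n}^{-1}(a)\|\le Cn^{m_{-}+m_{+}}=Cn^{\be}$. Since $\|T_{n}(a)\|\le\|a\|_{\nf}$, the upper bound $\ka(T_{n}(a))\le Cn^{\be}$ follows.
\item \emph{Lower bound.} Use a test vector: truncate and smooth the sequence $(t_{0}^{-j})$ with a bump of width $n$. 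Its image under $T_{n}(a)$ is $O(n^{-\be})$ relative to its norm, because $a$ vanishes to order $\be$ at $t_{0}$. Hence $\|T_{n}^{-1}(a)\|\ge cn^{\be}$. Together with $\|T_{n}(a)\|\to\|a\|_{\nf}>0$, this gives $\ka(T_{n}(a))\ge cn^{\be}$.
\end{itemize}

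\textbf{Case $k\notin J$.} Say $k>0$; the case $k<-\be$ follows by passing to $\ti a$, using $W_{n}T_{n}(a)W_{n}=T_{n}(\ti a)$. Mimic the proof of Theorem~\ref{th:low}.
\begin{itemize}
\item Construct a nearly-kernel vector for $T(a)$ with exponential decay: $\xi=T(\tht_{+}^{-1})T(t^{-?})\ldots$, i.e.\ $\xi$ is the sequence of Fourier coefficients of $\tht_{+}^{-1}(t)$. These coefficients decay exponentially by Lemma~\ref{lm:FourierBound}.
\item Then $T(a)\xi=T(\tht_{-})T\bigl(t^{k}(t-t_{0})^{\be}\bigr)e_{0}$. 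For $k>0$ this is nonzero but is a finite combination of shifts, so this candidate is not in the kernel. The winding sign must instead be arranged so that $k$ negative kills $e_{0}$: $T(t^{k}p)e_{0}=0$ when $p$ has degree $\be$ and $k<-\be$. That is exactly the case $k\notin J$ on the left, and the right side follows by symmetry.
\item Truncation then gives $\|T_{n}(a)P_{n}\xi\|\le Ce^{-\al n}$, exactly as in Theorem~\ref{th:low}, hence $\ka(T_{n}(a))\ge c\,e^{\al n}$.
\end{itemize}

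\textbf{Main obstacle.} The hardest step is the polynomial upper bound in the case $k\in J$. One has to show that the finite-section inverse of the factored symbol is controlled by the product of the inverses of the factors, up to bounded errors. In~\cite{BoGr99} this is done with an explicit inverse formula, of Widom or Trench type, for banded triangular pieces together with the stability of $T_{n}(\tht)$. I would try first to reduce to the banded case by approximating $\tht$, and then bound $T_{n}^{-1}$ entrywise via Cramer's rule using $\det T_{n}(a)\asymp$ const.
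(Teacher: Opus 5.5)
The paper does not prove this theorem. It quotes it from \cite{BoGr99}, and from \cite{AmoBru96} for band matrices, so your proposal has to stand on its own. Two of its three parts do.

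\textbf{Lower bound for $k\in J$.} The estimate $\|T_n^{-1}(a)\|\ge c\,n^{\be}$ via the modulated bump $x_j=t_0^{-j}\ph(j/n)$ is correct. The moments $\sum_m m^r\hat a_m t_0^{m}=\big((t\tfrac{d}{dt})^r a\big)(t_0)$ vanish for $r<\be$, and $\hat a_m$ decays exponentially.

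\textbf{Exponential case.} This is also correct once its direction is straightened out. Since $T(a)=T(\tht_-)\,T\big(t^k(t-t_0)^{\be}\big)\,T(\tht_+)$, the vector $\xi=T(\tht_+^{-1})e_0$ lies in $\ker T(a)$ exactly when $k<-\be$. The truncation argument from the proof of Theorem~\ref{th:low} then applies verbatim. The case $k>0$ follows from $\ti a(t)=(-t_0)^{\be}(t-\ol{t_0})^{\be}t^{-k-\be}\ti\tht(t)$: the map $k\mapsto -k-\be$ swaps the two components of $\bZ\sm J$ and preserves the hypotheses. Your opening sentence for that case has the reduction backwards, and the line with $T(t^{-?})$ should be deleted.

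\textbf{The gap: the upper bound for $k\in J$.} The genuine gap is $\|T_n^{-1}(a)\|=O(n^{\be})$ for $k\in J$, which is the substance of the theorem. Put $b_+=(t-t_0)^{\be+k}\tht_+$ and $b_-=(1-t_0/t)^{-k}\tht_-$. Then $T(a)=T(b_-)T(b_+)$, hence
\[
T_n(a)=T_n(b_-)T_n(b_+)+P_nT(b_-)Q_nT(b_+)P_n,\qquad Q_n=I-P_n.
\]
The product bound $\|T_n(b_+)^{-1}\|\,\|T_n(b_-)^{-1}\|=O(n^{\be})$ controls only the inverse of the first term. A perturbation concentrated in the lower right corner can change the norm of the inverse arbitrarily.

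Take the banded case, where the corner term has rank at most $\min(\be+k,-k)$. Even there you would have to invert the Woodbury capacitance matrix. Its entries are corner entries of $T_n(b_+)^{-1}T_n(b_-)^{-1}$, which grow like $n^{\be-1}$, so controlling its inverse requires a refined asymptotic expansion. Your sketch does not supply this.

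No soft version of your argument can work. Consider the example $t^{-1}(t-\al)(t-\ol{\al})=(t-\al)(1-\ol{\al}/t)$ discussed after Theorem~\ref{th:pure}. It has exactly the same plus-times-minus structure, with factor bounds $O(n)$ each. Your reasoning would therefore give $\|T_n^{-1}\|=O(n^2)$. Yet for $\al=\mi$ these matrices are singular for every odd $n$. So the single-zero hypothesis, i.e.\ that $b_+$ and $b_-$ vanish at the same point $t_0$, must enter the upper bound essentially. Your argument never uses it.

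\textbf{The fallback via Cramer's rule.} This does not close the gap either. First, $|\det T_n(a)|$ is not $\asymp$ const in the interior of $J$; for instance $\det T_n(t-2+t^{-1})=(-1)^n(n+1)$. Second, and more importantly, Cramer's rule needs bounds on all cofactors. Without structural information these are only exponential, as the exponential factor in Theorem~\ref{th:up} reflects.
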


For the ``pure zeros'' $(t-t_{0})^\be$ even more is known.

\begin{theorem}\label{th:pure}
Let $a(t)=(t-t_{0})^\be$ with $t_{0}\in\bT$ and $\be\in\bN$.
Then
\[
\ka(T_{n}(a))=2^\be\|K_\be\|n^\be (1+o(1))
\]
as $n\to\nf$, where $\|K_\be\|$ is the norm of the integral operator on $L^{2}(0,1)$ given by
\[
(K_\be f)(x)=\frac{1}{(\be-1)!}\int_{0}^{x} (x-y)^{\be-1}f(y)\,\d y.
\]
\end{theorem}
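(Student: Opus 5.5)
First reduce to $t_0=1$. Let $U_n=\diag(1,\bar t_0,\ldots,\bar t_0^{\,n-1})$, a unitary diagonal matrix. Since $T_n(a)$ has entries $\hat a_{j-k}$ and the symbol $a(t_0 t)$ has Fourier coefficients $\hat a_k t_0^k$, we have $U_n^{\ast}T_n(a)U_n=T_n(a(t_0\,\cdot))$. This conjugation preserves condition numbers. Moreover, $(t_0t-t_0)^\be=t_0^\be(t-1)^\be$, and the unimodular constant drops out. So we may assume $a(t)=(t-1)^\be$.

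Then $T_n(a)$ is lower triangular with diagonal entries $(-1)^\be$, hence invertible. Its inverse is $T_n(b)$ with $b(t)=(t-1)^{-\be}=(-1)^\be\sum_{k\ge0}\binom{k+\be-1}{\be-1}t^k$, truncated. This holds because finite sections of analytic (lower triangular) Toeplitz matrices multiply exactly.

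The norm $\|T_n(a)\|$ is easy. It is at most $\|a\|_\nf=2^\be$, and it converges to $2^\be$ because $P_nT(a)P_n\to T(a)$ strongly and $\|T(a)\|=\|a\|_\nf$. Hence $\|T_n(a)\|=2^\be(1+o(1))$, and everything reduces to showing
\[
\|T_n^{-1}(a)\|=\|K_\be\|\,n^\be(1+o(1)).
\]

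For this I would view $T_n^{-1}(a)$ as a discretization of $K_\be$. Its entries are $(-1)^\be\binom{j-k+\be-1}{\be-1}$ for $j\ge k$, and
\[
\binom{m+\be-1}{\be-1}=\frac{m^{\be-1}}{(\be-1)!}+O(m^{\be-2}).
\]
Let $E_n:L^2(0,1)\to\bC^n$ average over the cells $[k/n,(k+1)/n)$, scaled so that $E_n$ is a partial isometry whose adjoint embeds $\bC^n$ isometrically as step functions. Then $n^{-\be}T_n^{-1}(a)$ should be close in norm to $(-1)^\be E_nK_\be E_n^{\ast}$.

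I would split the kernel into its leading part and a remainder. The leading part is the step-function discretization of the continuous kernel $(x-y)^{\be-1}_+/(\be-1)!$, and its Hilbert–Schmidt distance to $E_nK_\be E_n^{\ast}$ tends to $0$ because the kernel is uniformly continuous on the triangle. For $\be=1$ there is a jump along the diagonal, but only on a set of measure $O(1/n)$. The remainder entries are $O(n^{\be-2})$ over $n^2$ positions, so after scaling by $n^{-\be}$ the remainder has Hilbert–Schmidt norm $O(1/n)$. Finally, $\|E_nK_\be E_n^{\ast}\|\to\|K_\be\|$. The upper bound $\le$ is immediate, and the lower bound follows because $E_n^{\ast}E_n\to I$ strongly and $K_\be$ is compact, so $E_n^{\ast}E_nK_\be E_n^{\ast}E_n\to K_\be$ in norm.

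The main obstacle is making the operator-norm error $o(n^\be)$. In particular the $\be=1$ diagonal and the lower-order binomial terms must be checked uniformly; I expect the Hilbert–Schmidt estimates above to handle both.
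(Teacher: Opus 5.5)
Your proposal is correct. The paper gives no argument of its own for this statement; it simply attributes the result to \cite{BoDo09}. So your discretization argument is a self-contained alternative to that citation rather than a variant of a proof in the text.

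Each of your steps holds up:
\begin{itemize}
\item The diagonal unitary similarity reduces the problem to $t_0=1$.
\item The exact inverse of $T_n((t-1)^\beta)$ comes from the power series of $(t-1)^{-\beta}$. This is legitimate because finite sections of lower triangular Toeplitz matrices multiply exactly.
\item $\|T_n(a)\|\to 2^\beta$ follows from $\|T_n(a)\|\le\|a\|_\infty$ together with strong convergence.
\item Hilbert--Schmidt estimates give $\|n^{-\beta}T_n^{-1}(a)-(-1)^\beta E_nK_\beta E_n^{\ast}\|\to 0$.
\item Norm convergence $E_n^{\ast}E_nK_\beta E_n^{\ast}E_n\to K_\beta$ holds because $K_\beta$ is compact.
\end{itemize}
For $\beta=1$ the difference is even a multiple of the identity of norm exactly $1/(2n)$. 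Your route buys an explicit, elementary proof; the paper buys brevity by outsourcing.

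Three small points to tidy up when you write it out:
\begin{itemize}
\item State the binomial remainder as $O((m+1)^{\beta-2})$. At $m=0$ the difference is $1$, not $O(0)$. Your cruder per-entry bound $O(n^{\beta-2})$ already absorbs this.
\item For $\beta\ge 2$, the diagonal cells $I_j\times I_j$ straddle both sides of the diagonal. You therefore need uniform continuity of $(x-y)_+^{\beta-1}$ on the whole square, which holds because the kernel vanishes on the diagonal, not merely on the triangle.
\item You use $\|K_\beta\|>0$ to convert the additive $o(1)$ error into the multiplicative factor $1+o(1)$. Say so explicitly.
\end{itemize}
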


This was established in~\cite{BoDo09}.
Note that the matrices $T_{n}((t-t_{0})^\be)$ are lower triangular.
It is well known that $\|K_{1}\|=2/\pi$.
More about the norms $\|K_\be\|$ can be found in~\cite{BoDo09}.
Paper~\cite{BoWi06} studies the case $(\be,k)=(2\nu,-\nu)$ of Theorem~\ref{th:RFact} in detail;
note that $(t-t_{0})^{2\nu}t^{-\nu}=(-t_{0})^\nu|t-t_{0}|^{2\nu}$.

The case of symbols with several zeros on~$\bT$ is more intricate.
The simplest example is the one where the symbol has two complex conjugate zeros $\al, \ol{\al}\in\bT$:
\[
a(t)=t^{-1}(t-\al)(t-\ol{\al})=t-2\Re\al+|\al|^{2} t^{-1}\quad(t\in\bT).
\]
The matrix $T_{n}(a)$ is tridiagonal.
With the unitary matrix $U=\diag(1,\al, \al^{2}, \ldots, \al^{n-1})$ we get $UT_{n}(a)U^{\ast}=T_{n}(b)$ where $T_{n}(b)$ is the Hermitian tridiagonal matrix with the symbol $b(t)=\al\, t-2\Re\al+\ol{\al}\, t^{-1}$.
The eigenvalues of $T_{n}(b)$ are known to be
\[
\la_{j}=-2\Re\al+2|\al|\cos\frac{\pi j}{n+1}, \quad j=1,2, \ldots,n;
\]
see, e.g.,~\cite[Theorem 2.4]{BoGr05}.
Consequently,
\[
\|T_{n}^{-1}(a)\|=\|T_{n}^{-1}(b)\|=\frac{1}{\min_{1\le j\le n}\big(-2\Re\al+2|\al|\cos\frac{\pi j}{n+1}\big)}.
\]
The minimum in the denominator may be zero or is nonzero but small, telling us that the condition numbers of $T_{n}(a)$ may go to infinity quite irregularly.
If, for example $\al=\mi$, then $\|T_{n}^{-1}(a)\|$ equals $\nf$ if $n$ is odd and is asymptotically equal to $(n+1)/\pi$ if $n$ is even.

In the applications we will consider in the forthcoming sections, we encounter block Toeplitz matrices that are lower block-diagonal.
The symbols of such matrices are matrix-polynomials, i.e., their Fourier series do not contain terms $t^{k}$ with $k<0$.

\begin{theorem}\label{th:mapol}
Let $A(t)=A_{0}+A_{1}t +\cdots+A_{m}t^{m}$ $(t\in\bT)$ with matrices $A_{j}\in\bC^{N \times N}$.
Suppose $\det A_{0} \ne 0$ but $\det A$ has zeros on~$\bT$.
Then the sequence $\{\ka(T_{n}(A))\}$ is unbounded.
If $\det A$ has no zeros inside~$\bT$, then $\ka(T_{n}(A))$ increases at most polynomially, that is, there is a finite number $\nu$ such that $\ka(T_{n}(A))=O(n^\nu)$ as $n\to\nf$.
If $\det A$ has zeros inside~$\bT$, then $\ka(T_{n}(A))$ may grow at most exponentially, i.e., $\ka(T_{n}(A))=O(\e^{\al n})$ with some constant $\al>0$.
\end{theorem}

\begin{proof}
We know from Theorem~\ref{th:limsup} that $\{\ka(T_{n}(A))\}$ must be an unbounded sequence.
The norms $\|T_{n}(A)\|$ converge to $\|A\|_{\nf}$.
Hence, the growth of $\ka(T_{n}(A))$ is solely determined by the norms $\|T_{n}^{-1}(A)\|$.
Note that the assumption $\det A_{0} \ne 0$ guarantees that the matrices $T_{n}(A)$ are all invertible.

To study the growth of the norms of the inverses, we invoke the Smith canonical form of $A(t)$; see, e.g., Theorem 18.1.2 of~\cite{Pra96}.
This is a representation
\begin{equation}\label{Smith}
A(t)=G_{+}(t)\diag(p_{1}(t), \ldots, p_N(t))H_{+}(t)=:G_{+}(t)D(t)H_{+}(t)
\end{equation}
with matrix-polynomials $G_{+},H_{+}$ having constant determinant, $\det G_{+}(t)=c_{1}\ne 0$ and $\det H_{+}(t)=c_{2}\ne 0$ on~$\bT$, and with monic scalar polynomials $p_{j}$ such that $p_{j-1}|p_{j}$ for $j=2, \ldots, N$.
Since all factors in~(\ref{Smith}) are factors of the $+$ type, we have
\[
T_{n}(A)=T_{n}(G_{+})T_{n}(D)T_{n}(H_{+}).
\]
The inverse of $G_{+}(t)$ is $(1/c_{1})\,{\rm adj}\, G_{+}(t)$, which is again a matrix-polynomial.
Analogously, $H_{+}^{-1}(t)$ is a matrix-polynomial.
It follows that $T_{n}(G_{+})$ and $T_{n}(H_{+})$ are invertible for all $n\ge 1$ and that their inverses are $T_{n}(G_{+}^{-1})$ and $T_{n}(H_{+}^{-1})$.
Thus, we obtain
\[
T_{n}^{-1}(A)=T_{n}(H_{+}^{-1}) T_{n}^{-1}(D) T_{n}(G_{+}^{-1})
\]
with $\|T_{n}(H_{+}^{-1})\|\le\|H_{+}^{-1}\|_{\nf}<\nf$ and $\|T_{n}(G_{+}^{-1})\|\le\|G_{+}^{-1}\|_{\nf}<\nf$.
The diagonal entries of $T_{n}(D)$ are of the form
\[
T_{n}((t-t_{1})^{\be_{1}})\cdots T_{n}((t-t_{r})^{\be_{r}})
\]
with $\be_{1}, \ldots, \be_{r}\in\bN \cup\{0\}$.
If $t_{j}\in\bT$, then Theorem~\ref{th:RFact} implies that the norms of the matrices $T_{n}^{-1}((t-t_{j})^{\be_{j}})$ increase at most polynomially.
For $|t_{j}|>1$, the norms of $T_{n}^{-1}((t-t_{j})^{\be_{j}})$ remain bounded.
Finally, if $0<|t_{j}|<1$, then the norms of $T_{n}^{-1}((t-t_{j})^{\be_{j}})$ are easily seen to increase exponentially.
Putting all this together, we arrive at the assertion of the theorem.
\end{proof}

To cover more general settings, take an $N \times N$ matrix polynomial $S$, a scalar polynomial~$q$ without zeros on~$\bT$, and consider $A(t)=S(t)/q(t)$.
Multiplying the Smith canonical form $S=G_{+}D_{s}H_{+}$ by $1/q$ yields the factorization
\begin{equation}\label{AS}
A(t)=G_{+}(t)\diag\bigg(\frac{p_{1}(t)}{q(t)}, \ldots, \frac{p_N(t)}{q(t)}
\bigg)H_{+}(t)=:G_{+}(t)D(t)H_{+}(t)
\end{equation}
with monic scalar polynomials $p_{j}$ such that $p_{j-1}|p_{j}$ for $j=2, \ldots N$.
Let $t_{1}, \ldots, t_{r}$ be the zeros of $\det A(t)$ on~$\bT$.
We then may write
\begin{equation}\label{ASj}
\frac{p_{j}(t)}{q(t)}=(t-t_{1})^{\be_{j,1}}\cdots (t-t_{r})^{\be_{j,r}}\,t^{\ka_{j}}\,r_{j}^{-}(t)r_{j}^{+}(t)
\end{equation}
where the $\ka_{j}$ are integers and $r_{j}^{+}(t)$ and $r_{j}^{-}(t)$ are rational functions that do not vanish for $|t|\le 1$ and $1\le|t|\le\nf$, respectively.
Since $p_{j-1}|p_{j}$, we actually have
\[
\be_{1,1}\le\cdots\le\be_{N,1}, \quad \ldots\,, \quad \be_{1,r}\le\cdots\le\be_{N,r},\quad
\ka_{1}\le\cdots\le\ka_N.
\]
It follows that
\begin{equation}\label{smallest}
\be_{1,1}+\cdots+\be_{1,r}+\ka_{1} \le\cdots\le\be_{N,1}+\cdots+\be_{N,r}+\ka_N
\end{equation}
Thus, abbreviating $(t-t_{1})^{\be_{j,1}}\cdots (t-t_{r})^{\be_{j,r}}$ to $\xi_{j}(t)$, we arrive at the representation
\[
A(t)=G_{+}(t)D_{-}(t)H_{+}'(t)
\]
with
\[
D_{-}(t)=\diag (\xi_{1}(t)t^{\ka_{1}}r_{1}^{-}(t), \ldots, \xi_N(t)t^{\ka_N}r_N^{-}(t)),
\quad H_{+}'(t)=\diag (r_{1}^{+}(t), \ldots, r_N^{+}(t))H_{+}(t).
\]

\begin{theorem}\label{th:G1}
Let $A(t)$ be as just described and suppose the largest number in~\eqref{smallest} is negative, that is,
\begin{equation}\label{sm1bis}
\be_{N,1}+\cdots+\be_{N,r}+\ka_N < 0.
\end{equation}
Then the condition numbers $\ka(T_{n}(A))$ grow at least exponentially, i.e., there are positive constants $c,\ga$ such that $\ka(T_{n}(A))\ge c\e^{\ga n}$ for all $n\ge 1$.
\end{theorem}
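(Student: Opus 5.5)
We mimic the kernel-vector argument in the proof of Theorem~\ref{th:low}: we construct a unit-size vector $\xi_n\in P_n\ell^2_N$ with $\|T_n(A)\xi_n\|\le C\e^{-\ga n}\|\xi_n\|$. Then $\|T_n^{-1}(A)\|\ge C^{-1}\e^{\ga n}$. Since $\|T_n(A)\|\to\|A\|_\nf>0$, this gives the claimed bound for large $n$, and for all $n\ge1$ after adjusting $c$ (when $T_n(A)$ is singular, $\ka=\nf$ anyway).

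\textbf{Step 1: a scalar exponentially decaying kernel vector.} Write $d_j(t):=\xi_j(t)t^{\ka_j}r_j^-(t)$ for the diagonal entries of $D_-$. Condition~\eqref{sm1bis}, combined with~\eqref{smallest}, gives $m_j:=\be_{j,1}+\cdots+\be_{j,r}+\ka_j<0$ for every $j$; in particular for $j=1$. Expanding the polynomial $\xi_1(t)$ shows that $\xi_1(t)t^{\ka_1}$ is a Laurent polynomial whose highest power is $t^{m_1}$ with $m_1\le -1$. Multiplying by $r_1^-$, which is analytic and nonvanishing for $1\le|t|\le\nf$, shows that $d_1$ is a function of ``minus type'' containing only powers $t^k$ with $k\le -1$. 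Hence the first column of $T(d_1)$ vanishes, so $T(d_1)e_0=0$, where $e_0=(1,0,0,\ldots)$. Because $D_-$ is diagonal, $T(D_-)$ is block-diagonal after permutation into scalar Toeplitz operators. Therefore, with $e_1\in\ell^2_N$ the first block unit vector as in the proof of Theorem~\ref{th:low}, we get $T(D_-)e_1=0$.

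\textbf{Step 2: lift through the outer factors.} We have $A=G_+D_-H_+'$ with $G_+$ a matrix polynomial. Since $G_+$ is analytic inside, $T(G_+D_-H_+')=T(G_+)T(D_-H_+')$. The factor $H_+'$ is a product of a rational diagonal of plus type with a polynomial; it has no poles in $|z|\le1$, and $\det H_+'\ne0$ there because $r_j^+$ and $\det H_+$ do not vanish. So $(H_+')^{-1}$ is rational and analytic on a disc $|z|<R$ with $R>1$. It follows that $T(D_-H_+')=T(D_-)T(H_+')$, with $T(H_+')$ invertible and inverse $T((H_+')^{-1})$. Set $\xi:=T((H_+')^{-1})e_1$. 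Then $\xi\ne0$, $T(A)\xi=T(G_+)T(D_-)e_1=0$, and by Lemma~\ref{lm:FourierBound} (or Cauchy estimates) the blocks of $\xi$ decay like $R^{-k}$. I expect this bookkeeping to be the main obstacle. One must check that the Toeplitz product rules $T(fg)=T(f)T(g)$ apply, since they hold when $f$ is of minus type or $g$ is of plus type. One must also check that the possibly non-$GR$ factor $D_-$ (it vanishes at the $t_i\in\bT$) still gives a bounded $T(D_-)$. Both hold because $D_-$ is rational without poles on $\bT$ and is of minus type.

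\textbf{Step 3: truncate.} Exactly as in the proof of Theorem~\ref{th:low}, set $\xi_n=P_n\xi$. Then
\[
T_n(A)\xi_n=-P_nT(A)Q_n\xi, \qquad \|Q_n\xi\|\le C R^{-n},
\]
so $\|T_n(A)\xi_n\|\le C\|A\|_\nf\e^{-\ga n}$ with $\ga=\log R$, while $\|\xi_n\|\ge\|\xi\|/2$ for large $n$. This yields $\ka(T_n(A))\ge c\e^{\ga n}$.
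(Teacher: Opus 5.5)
\textbf{The gap is in Step 2.} You use the identity $T(G_+D_-H_+')=T(G_+)\,T(D_-H_+')$, and it is false in general. The product rule you quote, $T(FG)=T(F)T(G)$, needs the \emph{left} factor to be of minus type or the \emph{right} factor to be of plus type. Here the left factor $G_+$ is of plus type and $D_-H_+'$ is not, so the rule does not apply; already $T(t)T(t^{-1})\ne I$.

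What does hold is $T(A)\,T((H_+')^{-1})=T(A(H_+')^{-1})=T(G_+D_-)$, since the right factor is of plus type. So your vector satisfies $T(A)\xi=T(G_+D_-)e_1$. This is the sequence of nonnegative-index Fourier coefficients of $d_1g_1$, where $g_1$ is the first column of $G_+$. The factor $d_1$ only contains powers up to $t^{m_1}$, but $g_1$ can have degree up to $\deg G_+$, so the result is generally nonzero.

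A concrete case: take $N=2$, $q(t)=t^2$, $p_1=p_2=t-1$, $G_+=\begin{pmatrix}1&0\\ t&1\end{pmatrix}$ and $H_+=I$. Condition~\eqref{sm1bis} holds with $m_1=m_2=-1$, and your $\xi$ is just $e_1$. The first column of $A$ is $(t^{-1}-t^{-2},\,1-t^{-1})^\top$, whose constant term $(0,1)^\top$ gives $T(A)e_1\ne0$. Then $\|T_n(A)P_n\xi\|\to\|T(A)\xi\|>0$, and Step 3 yields nothing. The other ingredients are fine: Step 1, $T(D_-H_+')=T(D_-)T(H_+')$, the invertibility of $T(H_+')$, and the decay of $(H_+')^{-1}$.

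\textbf{The missing idea} is that you need a kernel vector of $T(G_+D_-)$, not of $T(D_-)$. The two differ because $G_+$ stands on the wrong side of $D_-$. The paper supplies this by a degree count. It writes $T_n^{-1}(G_+)T_n(G_+D_-)=T_n(D_-)+B_n$. The diagonal part lowers degrees, because every $d_j$ contains only negative powers. The correction $B_n$ has range in vector polynomials of degree at most $k_1+k_2-1$, where $k_1=\deg G_+$ and $k_2=\deg G_+^{-1}$. This forces a kernel vector $X^{(n)}$ whose degree is bounded independently of $n$. The paper then handles $H_+'$ by replacing $R_+$ with the exponentially close $R_+^*$, and concludes via the distance of $T_n(A)$ to the singular matrices.

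Your infinite-operator route can be repaired in the same spirit. Let $P$ be the projection onto nonnegative powers and $Q=I-P$. Then $T(G_+^{-1})T(G_+D_-)=T(D_-)+T(G_+^{-1})PG_+QD_-P$. For $L>k_1+k_2$, this operator maps vector polynomials of degree $<L$ into vector polynomials of degree $<L-1$, so it annihilates some nonzero vector polynomial $X$. Since $T(G_+^{-1})$ is invertible, $X\in\ker T(G_+D_-)$. Then $\xi:=T((H_+')^{-1})X$ is a nonzero, exponentially decaying element of $\ker T(A)$, and your Step 3 goes through unchanged.
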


\begin{proof}
To make the proof more transparent, we now use that block Toeplitz matrices are unitarily similar to compressions of matrix multiplications on the $\bC^N$-valued space $L^{2}_N(\bT)$.
Throughout this (and the following) proof, we let $P_{n}$ stand for the orthogonal projection of $L^{2}_N(\bT)$ or $L^{2}(\bT)$ onto the subspace $L^{2}_{N,n}(\bT)$ or $L^{2}_{n}(\bT)$ of vector or scalar polynomials of the form $X(t)=X_{0}+X_{1}t+\cdots+X_{n-1}t^{n-1}$.
Thus, $T_{n}(\Ph)$ may be thought of as the operator $X \mapsto P_{n}\Ph X$ acting on $L^{2}_{N,n}(\bT)$.

Our first goal is to show that $T_{n}(G_{+}D_{-})$ is not invertible if $n$ is large enough.
The matrix $T_{n}(D_{-})$ is diagonal and its first entry is $T_{n}(\xi_{1}(t) t^{\ka_{1}}r_{1}^{-}(t))$.
By virtue of~\eqref{sm1bis}, we have
\begin{equation}\label{sm2}
\xi_{1}(t) t^{\ka_{1}}r_{1}^{-}(t)=\sum_{j=m}^{\nf}c_{j}t^{-j}
\end{equation}
with some $m\ge 1$.
This implies that $T_{n}(G_{+}D_{-})=P_{n}G_{+}P_{n}D_{-}P_{n}+P_{n}G_{+}(I-P_{n})D_{-}P_{n}$ equals
\[
P_{n}G_{+}P_{n} D_- P_{n}+P_{n}G_{+}QD_{-}P_{n},
\]
where $Q$ is the orthogonal projection of $L^{2}_N(\bT)$ onto the subspace spanned by $\{t^{-k}\}_{k=1}^{\nf}$.
It follows that
\[
T_{n}^{-1}(G_{+})T_{n}(G_{+}D_{-})=P_{n} D_{-} P_{n}+P_{n} G_{+}^{-1} P_{n} G_{+} QD_{-} P_{n}=: A_{n}+B_{n}.
\]
From~\eqref{sm2} we infer that the highest power of $t$ in
\[
P_{n}\xi_{1}(t) t^{\ka_{1}}r_{1}^{-}(t)(x_{0}+x_{1}t+\cdots+x_{n-1}t^{n-1})
\]
is $n-m-1$, which tells us that scalar polynomials $y_{0}+y_{1}t+\cdots+y_{n-1}t^{n-1}$ do not belong to the range of the first entry of $A_{n}=T_{n}(D_{-})$ if at least one of $y_{n-1}, \ldots, y_{n-m}$ is nonzero.
The maximal degree of a vector polynomial in the range of $B_{n}=P_{n} G_{+}^{-1} P_{n} G_{+} QD_{-} P_{n}$ is $k_{1}+k_{2}-1$ where $k_{1}$ and $k_{2}$ are the degrees of the matrix polynomials $G_{+}$ and $G_{+}^{-1}$, respectively.
In summary, if $n$ is sufficiently large then the first component of the range of $T_{n}^{-1}(G_{+})T_{n}(G_{+}D_{-})$ does not contain all polynomials of degree $n-1$.

At this point we have proved that $T_{n}^{-1}(G_{+})T_{n}(G_{+}D_{-})$ and hence also $T_{n}(G_{+}D_{-})$ is not invertible.
Let $X^{(n)}\in L^{2}_{N,n}(\bT)$ be a vector function in the kernel of $P_{n}G_{+}D_{-}P_{n}$.
We obtain that
\[
0=T_{n}^{-1}(G_{+})T_{n}(G_{+}D_{-})X^{(n)}=(A_{n}+B_{n})X^{(n)}=P_{n}(D_{-}X^{(n)})+B_{n}X^{(n)},
\]
and since the degree of $B_{n}X^{(n)}$ is at most $k_{1}+k_{2}-1$, the degree of $P_{n}(D_{-}X^{(n)})$ cannot exceed $k_{1}+k_{2}-1$, too.
Let $x_{j}^{(n)}$ denote the $j$th component of $X^{(n)}$.
The $j$th component of $P_{n}(D_{-}X^{(n)})$ is $P_{n}(\xi_{j}(t)t^{\ka_{j}}r_{j}^{-}(t) x_{j}^{(n)}(t))$, and from~\eqref{smallest} and~\eqref{sm1bis} we therefore infer that the degree of $P_{n}(D_{-}X^{(n)})$ is at least $\deg X^{(n)}+d$, with
\begin{equation*}
d:=\be_{1,1}+\cdots+\be_{1,r}+\ka_{1}<0.
\end{equation*}
Consequently,
\begin{equation}\label{kk}
\deg X^{(n)}\le k_{1}+k_{2}-1-d.
\end{equation}

Let us now turn to $T_{n}(A)=T_{n}(G_{+}D_{-}H_{+}')$.
Recall that $H_{+}'(t)=R_{+}(t)H_{+}(t)$ with
\[
R_{+}(t)=\diag (r_{1}^{+}(t), \ldots,r_N^{+}(t)).
\]
We approximate $R_{+}$ by the rational matrix function $R_{+}^{\ast}:=(P_{\lfloor n/2 \rfloor} R_{+}^{-1})^{-1}$.
From Section~\ref{ssec:prelim3} we conclude that $\|R_{+}-R_{+}^{\ast}\|_{\nf}=O(\e^{-\ga n})$ with some $\ga>0$.
Letting $H_{+}^{\ast}=R_{+}^{\ast}H_{+}$, we get $\|H_{+}'-H_{+}^{\ast}\|_{\nf}=O(\e^{-\ga n})$.
It results that
\[
T_{n}(A)=T_{n}(G_{+}D_{-}H_{+}^{\ast})+T_{n}(G_{+}D_{-}(H_{+}'-H_{+}^*))=:C_{n}+O_{n}
\]
with $\|O_n\|_{\nf}=O(\e^{-\ga n})$.

We now show that $C_{n}$ is not invertible whenever $n$ is sufficiently large.
Consider the vector function
\[
J^{(n)}(t)=H_{+}^{-1}(t)(P_{\lfloor n/2 \rfloor} R_{+}^{-1})(t)X^{(n)}(t).
\]
Putting $k_{3}=\deg H_{+}^{-1}$ we see from equality~\eqref{kk} that
\[
\deg J^{(n)}\le k_{3}+\lfloor n/2 \rfloor+\deg X^{(n)}\le k_{3}+\lfloor n/2 \rfloor+k_{1}+k_{2}-1-d,
\]
which implies that $J^{(n)}$ is in $L^{2}_{N,n}(\bT)$ for all sufficiently large $n$.
We claim that $C_{n}J^{(n)}=0$.
Indeed,
\begin{eqnarray*}
C_{n}J^{(n)} &=& P_{n}(G_{+}D_{-}H_{+}^{\ast}J^{(n)})\\
&=& P_{n}(G_{+}D_{-}(P_{\lfloor n/2 \rfloor} R_{+}^{-1})^{-1} H_{+}H_{+}^{-1}(P_{\lfloor n/2 \rfloor} R_{+}^{-1}) X^{(n)})\\
&=& P_{n}(G_{+}D_{-}X^{(n)})=0.
\end{eqnarray*}
Thus, we have $T_{n}(A)=C_{n}+O_{n}$ with $\|O_{n}\|_{\nf}=O(\e^{-\ga n})$ and ${\rm rank}\,C_{n}\le nN-1$.
As the smallest singular value of $T_{n}(A)$ is the distance of $T_{n}(A)$ to the matrices of rank at most $nN-1$, we conclude that the smallest singular value of $T_{n}(A)$ is $O(\e^{-\ga n})$, which gives the assertion of the theorem.
\end{proof}

The previous theorem may be regarded as a block case version of the situation $k<-\be$ in Theorem~\ref{th:RFact}, while the following one may be viewed as a block matrix version of the case $k>0$.

\begin{theorem}\label{th:3.9}
Let $A(t)$ be as described before Theorem~\ref{th:G1}, and suppose $\ka_1 \ge 0$ and $\ka_N>0$. Then the condition numbers $\ka(T_{n}(A))$ grow at least exponentially, i.e., there are positive numbers $c,\ga$ such that $\ka(T_{n}(A))\ge c\e^{\ga n}$ for all $n\ge 1$.
\end{theorem}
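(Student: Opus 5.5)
The plan is to exhibit an approximate kernel vector, as in the scalar case $k>0$ of Theorem~\ref{th:RFact}. There the adjoint (or the flipped matrix) has an exact kernel vector with exponentially decaying coefficients, and truncating it gives $\|T_{n}(A)^{\ast}y_{n}\|=O(\e^{-\ga n})\|y_{n}\|$. Since $\|T_{n}(A)\|\to\|A\|_{\nf}>0$, it suffices to show that the smallest singular value of $T_{n}(A)$ is $O(\e^{-\ga n})$. We also use that $T_{n}(A)^{\ast}=T_{n}(A^{\ast})$ and $W_{n}T_{n}(A)W_{n}=T_{n}(\ti{A})$, so we may pass to whichever of $A^{\ast}$, $\ti A$, $\ti A^{\ast}$ is most convenient.

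Write $A=G_{+}D_{-}H_{+}'$ with $D_{-}=\diag(\xi_{j}t^{\ka_{j}}r_{j}^{-})$. The $N$-th diagonal entry is $\xi_{N}(t)t^{\ka_{N}}r_{N}^{-}(t)$ with $\ka_{N}\ge 1$. Its factor $t^{\ka_{N}}r_{N}^{-}(t)$ has the expansion $\sum_{j\le \ka_{N}} c_{j}t^{j}$ and a nonzero coefficient at $t^{\ka_{N}}$, while $\xi_{N}$ is a polynomial.

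We work with the adjoint $T_{n}(A^{\ast})=P_{n}H_{+}'^{\ast}D_{-}^{\ast}G_{+}^{\ast}P_{n}$, where the starred factors are of minus type. The vector $e_{N}$ (the constant vector with last component $1$) satisfies $P_{n}(\ol{t}^{\ka_{N}}\ol{r_{N}^{-}}\,\ol{\xi_{N}}\cdot 1)\approx 0$ up to finitely many low powers. This is only approximate because $\ol{\xi_{N}}$ contains positive powers when the $t_{j}$ lie on $\bT$.

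A cleaner route mimics the first case of the proof of Theorem~\ref{th:low}. Set
\[
Y:=(H_{+}')^{-1}\,\big(t^{-\ka_{N}}\ldots\big),
\]
that is, choose the vector function $X=(H_{+}')^{-1}V$ with $V=(0,\ldots,0,v)^{\top}$, where $v$ is a polynomial of degree less than $\ka_{N}$ chosen so that $D_{-}V$ has its $N$-th component in $\spn\{t^{k}:k\ge \ka_{N}\}\cdot\xi_{N}$. The function $X$ is analytic in $|z|<R$ with $R>1$ (poles of $(r_{N}^{+})^{-1}$ lie outside $\overline{\bD}$). Then $P_{n}X$ has a tail of size $O(R^{-n})$ by Lemma~\ref{lm:FourierBound}.

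However, $G_{+}D_{-}H_{+}'X$ need not vanish. So the better choice is to use the WH-type reasoning of Theorem~\ref{th:low} applied to $\ti{A}$: the hypothesis $\ka_{1}\ge0$, $\ka_{N}>0$ makes the ``index'' positive. I therefore expect that $T(A^{\ast})$, or an operator sharing its finite-section behaviour, has a nontrivial kernel element with exponentially decaying coefficients, built from $t^{-\ka_{N}}$ compensated by the minus factors.

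Concretely, the planned step is to construct an exact solution $\eta$ of $T(A^{\ast})\eta=0$. Because of the zeros $\xi$ on $\bT$, the symbol is not in $GR$, so Theorem~\ref{th:low} does not apply directly. I would handle this the way Theorem~\ref{th:G1} did: approximate the outer rational factor ($R_{-}$ coming from the $r_{j}^{-}$) by $(P_{\lfloor n/2\rfloor}\,\cdot\,)^{-1}$ with error $O(\e^{-\ga n})$. Then show that the resulting banded-structure matrix $C_{n}$ is exactly singular, since for the $N$-th block row the range of $P_{n}\ol{t}^{\ka_{N}}(\cdots)P_{n}$ misses the top $\ka_{N}$ degrees, up to a finite-degree perturbation from $G_{+}$, $G_{+}^{-1}$. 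This uses the same degree-counting argument as Theorem~\ref{th:G1}, but in the adjoint (reversed-degree) direction, where $\ka_{1}\ge0$ guarantees that no other component can compensate the deficit.

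The conclusion is that $T_{n}(A^{\ast})=C_{n}+O_{n}$ with $\operatorname{rank}C_{n}\le nN-1$ and $\|O_{n}\|=O(\e^{-\ga n})$. This makes $\sigma_{\min}(T_{n}(A))=O(\e^{-\ga n})$, and the claim follows after adjusting constants for small $n$.

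The main obstacle is the degree-counting showing that $C_{n}$ is singular. I must check that the zeros $\xi_{j}$ on $\bT$, which are polynomial factors of plus type, do not destroy the deficit produced by $t^{\ka_{N}}$ after taking adjoints. The first thing I would try is to work with $W_{n}T_{n}(A)W_{n}=T_{n}(\ti{A})$ instead of the adjoint. This turns $t^{\ka_{N}}$ into $t^{-\ka_{N}}$ while keeping the $\xi_{j}$ polynomial in $t^{-1}$, which makes the argument literally parallel to Theorem~\ref{th:G1}.
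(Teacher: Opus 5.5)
Your template (replace a non-polynomial outer factor by the inverse of a truncation of its inverse at cost $O(\e^{-\gamma n})$, show that the resulting matrix is exactly singular, and read off $\sigma_{\min}(T_n(A))=O(\e^{-\gamma n})$) is the one the paper uses. But the step that carries all the content is left open: why the approximant is singular, and why its kernel vector is almost annihilated by $T_n(A)$. The shortcut you propose for it does not work.

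Passing to $W_nT_n(A)W_n=T_n(\tilde A)$ does not make the argument ``literally parallel'' to Theorem~\ref{th:G1}. Besides turning $t^{\kappa_N}$ into $t^{-\kappa_N}$, the flip turns $G_+(t)$ and $H_+'(t)$ into $G_+(1/t)$ and $H_+'(1/t)$, which are of minus type. It also turns the factors $r_j^-$ inside the diagonal into plus-type factors $r_j^-(1/t)$. The proof of Theorem~\ref{th:G1} needs the opposite at every step:
\begin{itemize}
\item $T_n^{-1}(G_+)=T_n(G_+^{-1})$ with $G_+^{\pm1}$ polynomials in $t$, so that the correction $P_nG_+^{-1}P_nG_+QD_-P_n$ has range of bounded degree;
\item a diagonal factor $D_-$ whose entries contain only finitely many positive powers of $t$;
\item a candidate kernel vector $H_+^{-1}(P_{\lfloor n/2\rfloor}R_+^{-1})X^{(n)}$ that is a polynomial in $t$ of degree less than $n$.
\end{itemize}
Recovering plus-type outer factors would require a fresh Smith form of $\tilde A$, whose indices are not determined by the $\kappa_j$ of $A$ (compare the partial indices of $\tilde A$ in Subsection~\ref{ssec:prelim1}). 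The adjoint has the same defect. Adjoint combined with flip (the symbol $\sum_k\hat A_k^{*}t^k$) just reproduces a factorization of the same form with the same $\kappa_j$. So no symmetry reduces the statement to Theorem~\ref{th:G1}.

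What is missing is a degree count for $T_n(A)$ itself, with the deficit at the bottom and the contamination at the top. The paper proceeds as follows.
\begin{itemize}
\item Regroup $A=G_+R_-D_+H_+$ with $R_-=\diag(r_j^-)$ and $D_+=\diag(\xi_jt^{\kappa_j}r_j^+)$. The role of $\kappa_1\ge0$ is precisely that $D_+$ is then of plus type. It is not that ``no other component can compensate the deficit''; the diagonal part does not mix components anyway.
\item Replace $R_-$ by $R_-^{*}=(\text{truncation of }R_-^{-1})^{-1}$ and form $T_n^{-1}(R_-^{*})\,T_n(R_-^{*}D_+H_+)\,T_n^{-1}(H_+)=P_nD_+P_n+C_n$.
\item The range of $C_n$ consists of polynomials with no terms below degree roughly $n/2$. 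This is because $(R_-^{*})^{-1}$ is a polynomial in $t^{-1}$ of degree less than $n/2$ acting on the part of degree at least $n$.
\item The $N$th diagonal entry $T_n(\xi_Nt^{\kappa_N}r_N^+)$ never produces $1,t,\ldots,t^{\kappa_N-1}$. Hence $P_nD_+P_n+C_n$ is singular.
\item Each $\xi_jt^{\kappa_j}r_j^+$ is a power series whose lowest term $t^{\kappa_j}$ has a nonzero coefficient. So a kernel vector $X^{(n)}$ has no terms below degree about $n/2$. The same holds for $J^{(n)}=T_n(H_+^{-1})X^{(n)}$, which satisfies $T_n(R_-^{*}D_+H_+)J^{(n)}=0$.
\item Then $T_n(A)J^{(n)}$ equals $P_nG_+QR_-^{*}D_+H_+J^{(n)}$ up to $O(\e^{-\gamma n})\|J^{(n)}\|$.
\item Replace $R_-^{*}$ by its truncation $R_-^{**}$ to $\lfloor n/4\rfloor$ negative powers, another exponentially small error. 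This kills the remaining term, since $R_-^{**}D_+H_+J^{(n)}$ has no negative powers left.
\end{itemize}
This separation of low and high degrees, with $\kappa_1\ge0$ keeping the kernel vector in high degrees, is the idea your proposal lacks.
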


\begin{proof}
We proceed as in the previous proof.
This time we reorganize the representation for $A(t)$ to $A(t)=G_{+}(t)R_{-}(t)D_{+}(t)H_{+}(t)$ where
\[
D_{+}(t)=\diag(\xi_{1}(t)t^{\ka_{1}}r_{1}^{+}(t),\ldots, \xi_N(t)t^{\ka_N}r_N^{+}(t))
\]
and
\[
R_{-}(t)=\diag(r_{1}^{-}(t),\ldots, r_N^{-}(t)).
\]
Let $R_{-}^{\ast}(t)$ be the approximation of $R_{-}(t)$ given by $R_{-}^{\ast}(t)=(P_{\lfloor n/2 \rfloor}R_{-}^{-1}(t))^{-1}$.
From Section~\ref{ssec:prelim3} we conclude that $\|R_{-}(t)-R_{-}^{\ast}(t)\|_{\nf}=O(\e^{-\ga n})$ with some $\ga>0$, which yields the splitting
\begin{equation}\label{sm3.8}
T_{n}(A)=T_{n}(G_{+}R_{-}^{\ast}D_{+}H_{+})+O_{1,n}
\end{equation}
with $\|O_{1,n}\|_{\nf}=O(\e^{-\ga n})$.

Let $Q_{n}=I-P_{n}$ on $L^{2}_{n}(\bT)$ and consider first the operator
\begin{eqnarray}\label{sm3.9}
T_{n}(R_{-}^{\ast}D_{+}H_{+})& &=P_{n}R_{-}^{\ast}P_{n}D_{+}H_{+}P_{n}+P_{n}R_{-}^{\ast}Q_{n}D_{+}H_{+}P_{n}\nonumber\\
& &=T_{n}(R_{-}^{\ast})T_{n}(D_{+})T_{n}(H_{+})+P_{n}R_{-}^{\ast}Q_{n}D_{+}H_{+}P_{n}\\
& &=:A_{n}+B_{n}\nonumber.
\end{eqnarray}
Multiplying (\ref{sm3.9}) by $T_{n}^{-1}(H_{+})=T_{n}(H_{+}^{-1})$ on the right and by $T_{n}^{-1}(R_{-}^{\ast})=T_{n}((R_{-}^{\ast})^{-1})$ on the left we obtain
\begin{eqnarray*}
I_{n} & &:=T_{n}^{-1}(R_{-}^{\ast})T_{n}(R_{-}^{\ast}D_{+}H_{+})T_{n}^{-1}(H_{+})\\
& &=P_{n}D_{+}P_{n}+P_{n}(R_{-}^{\ast})^{-1}B_{n}P_{n}H_{+}^{-1}P_{n}\\
& &=: P_{n}D_{+}P_{n}+C_{n}.
\end{eqnarray*}
We prove that the operator $I_{n}$ is not invertible.
The operator $P_{n}D_{+}P_{n}$ is diagonal and its last entry is $D_N:=T_{n}(\xi_N(t)t^{\ka_N}r_N^{+}(t))$.
Put $m=\ka_N$.
It is easy to see that (scalar) polynomials of the form $X_{n}(t)=\sum_{j=0}^{n-1}x_{j}t^{j}$ with $x_{j}\in\bC$ do not belong to the range of $D_N$ if at least one of the numbers $x_{0},x_{1},\ldots,x_{m-1}$ is nonzero.

On the other hand, the range of $C_{n}$ is contained in subspace of vector polynomials of the kind
\begin{equation}\label{sm3.10}
X_{n}(t)=\sum_{j=h}^{n-1}x_{j}t^{j},\quad x_{j}\in \bC^N,
\end{equation}
where the number $h$ satisfies
\begin{equation}\label{sm3.11}
h\ge n-1-\lfloor n/2 \rfloor.
\end{equation}
Indeed, this follows from the relations
\begin{eqnarray*}
{\rm Range}(C_{n}) &\subset & {\rm Range}(P_{n}(R_{-}^{\ast})^{-1}P_{n}R_{-}^{\ast}\mid\Im Q_{n})\\
&=& {\rm Range}(P_{n}(R_{-}^{\ast})^{-1}Q_{n}R_{-}^{\ast}\mid\Im Q_{n})\\
& \subset & {\rm Range}(P_{n}(R_{-}^{\ast})^{-1}\mid\Im Q_{n}).
\end{eqnarray*}
Thus, the range of $I_{n}$ is essentially smaller than $L_{N,n}^{2}(\bT)$.

Consequently, the operator $I_{n}$ is not invertible and there exists a function $X^{(n)}$ belonging to the kernel of $I_{n}$.
That is,
\[
P_{n}D_{+}(t)X^{(n)}(t)+C_{n}(t)X^{(n)}(t)=0.
\]
We note that the function $C_{n}(t)X^{(n)}(t)$ is a polynomial of the kind (\ref{sm3.10})-(\ref{sm3.11}).
So the vector polynomial $P_{n}D_{+}(t)X^{(n)}(t)$ also has the form (\ref{sm3.10})-(\ref{sm3.11}).
This means that
\begin{equation}\label{sm3.12}
X^{(n)}(t)=\sum_{j=h_{1}}^{n-1}x_{j}t^{j},\quad x_{j}\in\bC^N
\end{equation}
where $h_{1}\ge n-1-\lfloor n/2 \rfloor+\ka_{1}$. Here we use the hypothesis $\ka_{1}\ge 0$ to guarantee that each diagonal entry $\xi_j(t)t^{\ka_{j}}r_j^+(t)$ of $D_{+}(t)$ is a power series in nonnegative powers of $t$, so that the lowest degree occurring in $P_{n}D_{+}(t)X^{(n)}(t)$ is bounded from below by $\ka_{1}$. It is easy to see that $J^{(n)}(t):=P_{n}H_{+}^{-1}(t)X^{(n)}(t)$ belongs to the kernel of $T_{n}(R_{-}^{\ast}D_{+}H_{+})$ and that $J^{(n)}(t)$ has the form (\ref{sm3.12}), too.

We now return to (\ref{sm3.8}).
We have
\begin{eqnarray}\label{sm3.13}
(T_{n}(A)J^{(n)})(t)& &=(P_{n}G_{+}R_{-}^{\ast}D_{+}H_{+}J^{(n)})(t)+(O_{1,n}J^{(n)})(t)\nonumber\\
& &=P_{n}G_{+}P_{n}(R_{-}^{\ast}D_{+}H_{+}J^{(n)})(t)\nonumber\\
& & \quad+(P_{n}G_{+}Q R_{-}^{\ast}D_{+}H_{+}J^{(n)})(t)+(O_{1,n}J^{(n)})(t)\nonumber\\
& &=0+(P_{n}G_{+}Q(R_{-}^{\ast}-R_{-}^{**})D_{+}H_{+}J^{(n)})(t)\nonumber\\
& & \quad+(P_{n}G_{+}QR_{-}^{**}D_{+}H_{+}J^{(n)})(t)+(O_{1,n}J^{(n)})(t)\nonumber\\
& &=:(O_{2,n}J^{(n)})(t)+(O_{3,n}J^{(n)})(t)+(O_{1,n}J^{(n)})(t),
\end{eqnarray}
where $R_{-}^{**}(t)=(P_{\lfloor n/4 \rfloor}R_{-}^{\ast})(t)$.
It is easily seen that
\[
\|O_{2,n}J^{(n)}\|_{\nf}=O(\e^{-\ga n}), \quad \ga>0.
\]
Finally, $(O_{3,n} J^{(n)})(t)$
 is identically zero.
Indeed, we have
\[
R_{-}^{**}(t)D_{+}(t)H_{+}(t)J^{(n)}(t)=\sum_{j=h_{2}}^{\nf} x_{j}t^{j},
\]
where $h_{2}\ge n-1-\lfloor n/2 \rfloor-\lfloor n/4 \rfloor+\ka_{1}$.
If $n$ large enough, then $h_{2}>0$.
So we conclude that $(Q R_{-}^{**}D_{+}H_{+}J^{(n)})(t)$ is identically zero, implying that so also is $(O_{3,n} J^{(n)})(t)$.

In summary, from (\ref{sm3.13}) it results that $\|(T_{n}(A)J^{(n)})(t)\|_{\nf}=O(\e^{-\ga n})$, and the proof can be finished as the proof of Theorem~\ref{th:G1}.
\end{proof}

\section{Finite block Toeplitz band matrices}\label{sec:connection}

The present section is devoted to translating the results of the previous section into the terminology used in~\cite{AmoBru96}, since this work is one of the key tools employed in the recent numerical literature on stability features of approximation schemes for space-time PDEs~\cite{FeFr26,FeFrLoPe25,FeGo25}.

For non-negative integers $m,k$, we consider $m+k+1$ matrices $M_{-k},\ldots, M_{0},\ldots, M_{m}$ with $M_{j}\in\bC^{N\times N}$ and the $nN \times nN$ block Toeplitz band matrices as in Figure~\ref{fg:3}.
\begin{figure}[h]
\centering
\begin{tikzpicture}[scale=1]
\def\n{5}
\def\gap{0.1}
\def\size{1}
\foreach\mi in {0,...,4} {
\foreach\j in {0,...,4} {
\pgfmathsetmacro{\x}{\j*(\size+\gap)}
\pgfmathsetmacro{\y}{-\mi*(\size+\gap)}
\draw (\x,\y) rectangle++(\size,-\size);}}

\foreach\mi/\j/\testo in {
0/0/M_{0}, 0/1/\dots, 0/2/M_{-k},
1/0/\vdots, 1/1/\ddots, 1/3/\ddots,
2/0/M_{m}, 2/2/\ddots, 2/4/M_{-k},
3/1/\ddots, 3/3/\ddots, 3/4/\vdots,
4/2/M_{m}, 4/3/\dots, 4/4/M_{0}} {
\pgfmathsetmacro{\x}{\j*(\size+\gap)+0.5*\size}
\pgfmathsetmacro{\y}{-\mi*(\size+\gap)-0.5*\size}
\node at (\x,\y) {$\testo$};}
\end{tikzpicture}

\caption{Each block $M_{j}$ occupies the $j$th block diagonal, where $j<0$ corresponds to superdiagonals and $j>0$ to subdiagonals.}\label{fg:3}
\end{figure}

These matrices may be written in the form $T_{n}(t^{-k}\SR(t))$ where $\SR(t)$ is the polynomial of degree $k+m$ with coefficients in $\bC^{N\times N}$ given by
\begin{equation}\label{eq:R}
\SR(t)=\sum_{j=-k}^{m} M_{j} t^{k+j}=M_{-k}+M_{-k+1}t+\cdots+M_{m}t^{k+m}.
\end{equation}
We say that the polynomial $\SR$ is of type $(s,u,\ell)$ if $\det \SR$ has
\begin{itemize}
\item $s$ zeros with modulus smaller than $1$,
\item $u$ zeros with unit modulus,
\item $\ell$ zeros with modulus larger than $1$.
\end{itemize}
Here zeros are counted with their multiplicity.

\begin{theorem}\label{cr:R}
Let $\SR$ be the polynomial~\eqref{eq:R} and put $A(t)=t^{-k}\SR(t)$.
If $\SR$ is of type $(s,0,\ell)$ with $s\ne kN$, then there exist constants $c>0$ and $\al>0$ such that $\ka(T_{n}(A))\ge~c\e^{\al n}$.
\end{theorem}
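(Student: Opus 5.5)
The plan is to reduce the statement to Theorem~\ref{th:low} by showing that $T(A)$ fails to be invertible when $s\ne kN$. Type $(s,0,\ell)$ means $\det\SR$ has no zeros on~$\bT$. Since $\det A(t)=t^{-kN}\det\SR(t)$ and $A$ is a Laurent matrix polynomial without poles on~$\bT$, we have $A\in GR_{N\times N}(\bT)$. We first check that $\det\SR$ is not identically zero, which is implicit in the type being defined with finitely many zeros; in that situation all hypotheses of Theorem~\ref{th:low} are met.

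Next we compute the index of $T(A)$ with Theorem~\ref{th:Fredholm}. By the argument principle, the winding number of the scalar polynomial $\det\SR$ about the origin along $\bT$ equals the number of its zeros inside the unit disk, namely $s$. The factor $t^{-kN}$ contributes winding number $-kN$. Hence
\[
\ind(T(A))=-\wind(\det A)=-(s-kN)=kN-s\ne 0.
\]
A Fredholm operator with nonzero index is not invertible, so $T(A)$ is not invertible. Equivalently, by Theorem~\ref{th:inv} the partial indices sum to $s-kN\ne0$, so they cannot all vanish.

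Theorem~\ref{th:low} then says that, because at least one of $T(A)$ and $T(\ti A)$ is not invertible, there exist $c,\al>0$ with $\ka(T_{n}(A))\ge c\e^{\al n}$ for all $n\ge1$. This gives the claim.

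The proof has no real obstacle. The only delicate point is the bookkeeping in the winding number. One must make sure the degree of $\det\SR$ need not equal $(k+m)N$, because $M_m$ may be singular. This does not matter: only zeros inside the disk contribute to the winding, and zeros "at infinity" (degree drop) do not affect $\wind$. Likewise, a possible zero of $\det\SR$ at $t=0$ (when $M_{-k}$ is singular) is correctly counted among the $s$ interior zeros.
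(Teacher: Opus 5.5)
Your proposal is correct and follows the paper's proof essentially verbatim: you compute $\wind(\det A)=s-kN$ via the argument principle, conclude from Theorem~\ref{th:Fredholm} that $\ind(T(A))=kN-s\neq 0$, hence $T(A)$ is not invertible, and then invoke Theorem~\ref{th:low}. Your extra remarks on a possible degree drop of $\det\SR$ and on a zero at $t=0$ are correct bookkeeping that the paper leaves implicit.
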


\begin{proof}
Since $\det \SR$ has no zeros on~$\bT$, we have $A\in GR_{N\times N}(\bT)$.
Moreover, $ \det A(t)=t^{-kN}\det \SR(t), $ from which it follows that
\[
\wind(\det A)=\wind(t^{-kN})+\wind(\det \SR)=-kN+\wind(\det \SR).
\]
By the argument principle, the winding number of the polynomial $\det\SR$ is exactly the number of its roots strictly inside~$\bT$.
Therefore, $\wind(\det \SR)=s$, which leads to the equality $ \wind(\det A)=s-kN. $
According to Theorem~\ref{th:Fredholm}, the operator $T(A)$ is Fredholm and its index is given by
\[
\ind(T(A))=-\wind(\det A)=kN-s.
\]
Since $s\ne kN$ by our hypothesis, we obtain that $\ind(T(A))\ne 0$.
As a bounded linear operator must have index zero in order to be invertible, the operator $T(A)$ fails to be invertible.
Thus, because $A\in GR_{N\times N}(\bT)$ and $T(A)$ is not invertible, Theorem~\ref{th:low} applies directly.
\end{proof}

Here is what we can say in the case of matrix-polynomials, that is, for $k=0$.

\begin{theorem}\label{th:types}
Let $\SR$ be the polynomial~\eqref{eq:R} with $k=0$.
\begin{itemize}
\item If $\SR$ is of type $(s,u,\ell)$ with $u\ge 1$, then $\limsup \ka(T_{n}(\SR))=\nf$.
\item If $\SR$ is of type $(0,u,\ell)$, then $\ka(T_{n}(\SR))$ increases at most polynomially.
\item If $\SR$ is of type $(s,0,\ell)$ with $s\ge 1$, then $\ka(T_{n}(\SR))$ increases exponentially.
\end{itemize}
\end{theorem}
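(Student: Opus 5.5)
The three assertions reduce, almost verbatim, to results already established, with $A=\SR$ since $k=0$. Throughout, one subtlety must be handled: Theorem~\ref{th:mapol} assumes $\det M_0\ne 0$, that is, $\det\SR(0)\ne0$.

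For the first item, $u\ge1$ means that $\det\SR$ vanishes somewhere on~$\bT$. Since $\SR$ is a matrix polynomial, it belongs to $R_{N\times N}(\bT)$. Theorem~\ref{th:limsup} then gives $\limsup\ka(T_n(\SR))=\nf$ directly, and no condition on $M_0$ is needed.

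For the second item, type $(0,u,\ell)$ means that $\det\SR$ has no zeros in the open unit disk. In particular $\det\SR(0)=\det M_0\ne0$, so the hypothesis of Theorem~\ref{th:mapol} holds automatically and all $T_n(\SR)$ are invertible. If $u\ge1$, the second assertion of Theorem~\ref{th:mapol} gives $\ka(T_n(\SR))=O(n^\nu)$. If $u=0$, then $\SR\in GR_{N\times N}(\bT)$ and the conclusion is even stronger. Indeed, $\wind\det\SR=0$, so the index of $T(\SR)$ is $0$. Because $\SR$ and $\SR^{-1}$ are both analytic in the closed disk, $\SR$ is its own $A_+$ factor with all partial indices zero, so $T(\SR)$ is invertible. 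Likewise $\ti\SR(t)=\SR(1/t)$ is of minus type with invertible inverse, so $T(\ti\SR)$ is invertible. Theorem~\ref{th:low} then gives bounded condition numbers, which is trivially polynomial.

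For the third item, type $(s,0,\ell)$ with $s\ge1$ makes $\SR\in GR_{N\times N}(\bT)$ with $\wind\det\SR=s\ne0=kN$. This is exactly Theorem~\ref{cr:R} with $k=0$, which yields $\ka(T_n(\SR))\ge c\e^{\al n}$. I read ``increases exponentially'' as this lower bound. If the matching upper bound is also intended, I would add it when $\det M_0\ne0$ via the third assertion of Theorem~\ref{th:mapol}. The natural route is the Smith form argument of that proof: with no unit-circle zeros, every factor $T_n^{-1}((t-t_j)^{\be_j})$ with $|t_j|<1$ has norm at most exponential in $n$.

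The only real obstacle is that Theorem~\ref{th:mapol} formally needs $\det M_0\ne0$. I will note that this is automatic in the second item, and in the third item it is needed only for the optional upper bound; if $\det M_0=0$ there, the matrices $T_n(\SR)$ are singular and $\ka=\nf$, which is consistent with the lower bound.
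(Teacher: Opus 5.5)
Your proposal is correct and matches the paper, whose proof simply cites Theorems~\ref{th:low}, \ref{th:limsup}, \ref{th:mapol} and~\ref{cr:R}, assembled exactly as you do; Theorem~\ref{th:low} covers the $u=0$ subcase of the second item, where Theorem~\ref{th:mapol} does not apply. Your extra remarks are accurate and only make explicit what the paper's one-line proof leaves implicit: that $\det M_0\ne0$ is automatic for type $(0,u,\ell)$, and that for the optional upper bound in the third item you must rerun the Smith-form argument rather than cite Theorem~\ref{th:mapol}, whose hypothesis requires zeros on $\bT$.
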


\begin{proof} This follows from Theorems~\ref{th:low},~\ref{th:limsup},~\ref{th:mapol},~\ref{cr:R}.
\end{proof}

\begin{remark} \label{rm:221}
Let $N\ge 2$, let $A\in R_{N \times N}(\bT)$ be of the form
\[
A(t)=t^{-1}\SR(t)=t^{-1}(M_{-1}+M_{0}t+M_{1}t^{2}+M_{2}t^{3}),
\]
and suppose $\SR$ is of type $(2,2,1)$.
This is a case we will repeatedly encounter in the applications and for which numerical evidence suggests that $\ka(T_{n}(A))$ grows at most polynomially. We are, however, not able to establish this polynomial growth at present.

Theorems~\ref{th:G1} and~\ref{th:3.9} only provide sufficient conditions for at least exponential growth, and neither of them applies here.
Indeed, write $\det \SR(t)=(t-\de_{1})(t-\de_{2})(t-t_{1})(t-t_{2})(t-\nu)$ with $|\de_{1}|<1$, $|\de_{2}|<1$, $|t_{1}|=|t_{2}|=1$, $|\nu|>1$; the polynomial $q(t)$ in the factorization~\eqref{AS} is $t$.
Since $p_{j-1}$ must divide $p_{j}$, for $t_{1}\neq t_{2}$ and $\de_{1}\neq\de_{2}$ the only possible diagonal matrix in~\eqref{AS} is
\begin{equation}\label{diff}
\diag(t^{-1}, \ldots, t^{-1}, t^{-1} (t-\de_{1})(t-\de_{2})(t-t_{1})(t-t_{2})(t-\nu)),
\end{equation}
which is~\eqref{ASj} with $\be_{j,1}=\be_{j,2}=0$, $\ka_{j}=-1$, $r_{j}^{-}(t)=r_{j}^{+}(t)=1$ for $1\le j\le N-1$.
Thus the first diagonal entry gives $\be_{1,1}+\be_{1,2}+\ka_{1}=-1<0$, while, by $p_{j-1}\mid p_{j}$, the last entry of~\eqref{diff} contains all five zeros, so that, writing $(t-\de_{i})=t(1-\de_{i}t^{-1})$, we obtain $\be_{N,1}+\be_{N,2}+\ka_{N}=3$.

Consequently, the hypothesis $\be_{N,1}+\cdots+\be_{N,r}+\ka_{N}<0$ of Theorem~\ref{th:G1} is not satisfied and since $\ka_{1}=-1<0$, the hypothesis $\ka_{1}\ge 0$ of Theorem~\ref{th:3.9} fails as well. The failure of both criteria for exponential growth is consistent with the polynomially growing behavior observed numerically, but a proof of the at most polynomial growth in this situation is beyond the techniques developed here and remains open.
\end{remark}

The scalar case $N=1$ allows a full characterization of stability.

\begin{theorem}
Let $A\in R_{1\times 1}(\bT)$ be of the form $A(t)=t^{-k}\SR(t)$ with $\SR(t)$ as in~\eqref{eq:R}.
Then the condition numbers $\ka(T_{n}(A))$ are uniformly bounded as $n\to\nf$ if and only if $\SR$ is of type $(k,0,m)$.
\end{theorem}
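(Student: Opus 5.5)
We prove the two directions separately, in both cases by reducing to the invertibility of $T(A)$ and $T(\ti{A})$ and then invoking Theorem~\ref{th:low} and Theorem~\ref{th:limsup}. Here $N=1$ and $\SR$ is a scalar polynomial of degree $k+m$; we assume $M_{-k}\ne0$ and $M_m\ne0$ so that the band is genuinely $(k,m)$. Then $\SR$ has exactly $k+m$ zeros counted with multiplicity, so a type $(s,u,\ell)$ satisfies $s+u+\ell=k+m$.

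\emph{Sufficiency.} Suppose $\SR$ is of type $(k,0,m)$. Then $\SR$ has no zeros on $\bT$, so $A=t^{-k}\SR\in GR_{1\times1}(\bT)$. As in the proof of Theorem~\ref{cr:R} with $N=1$, $\wind(A)=s-k=0$, and Theorem~\ref{th:Fredholm} gives $\ind T(A)=0$. In the scalar case the single partial index is $\ka_1=\wind(A)$; equivalently, the WH factorization is $A=a_-a_+$ with $a_+$ the product of the factors $(t-\nu)$, $|\nu|>1$, and $a_-=t^{-k}\prod(t-\de_i)$, $|\de_i|<1$, which has $k$ zeros inside $\bT$ cancelling $t^{-k}$. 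Hence $\ka_1=0$, and Theorem~\ref{th:inv} shows $T(A)$ is invertible. Moreover $T(\ti{A})$ is the transpose of $T(A)$ in the scalar case, so it is invertible too. Theorem~\ref{th:low} then yields uniformly bounded condition numbers.

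\emph{Necessity.} Suppose $\SR$ is not of type $(k,0,m)$. If $u\ge1$, then $A$ has a zero on $\bT$ and Theorem~\ref{th:limsup} gives $\limsup\ka(T_n(A))=\nf$, so the sequence is not bounded. If $u=0$, then $s+\ell=k+m$ and $(s,\ell)\ne(k,m)$, which forces $s\ne k=kN$. Theorem~\ref{cr:R} then gives exponential growth. In both cases the condition numbers are unbounded.

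The only point needing care is the scalar identification of the partial index with the winding number (equivalently, that index $0$ suffices for invertibility when $N=1$). This follows from Theorem~\ref{th:inv}, since with $N=1$ the index is $-\ka_1$. The rest is bookkeeping on zero counts, which relies on the nondegeneracy assumption $M_{-k}M_m\ne0$ stated at the outset.
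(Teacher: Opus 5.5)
Your proof is correct and follows essentially the paper's route: you compute $\wind(A)=s-k$, identify it with the single partial index so that $T(A)$ (and hence its transpose $T(\ti{A})$) is invertible exactly when $s=k$, and conclude via Theorem~\ref{th:low}, which is also what Theorem~\ref{cr:R} packages. You are actually slightly more complete than the paper: its proof starts from type $(s,0,\ell)$ and tacitly uses $\deg\SR=k+m$, whereas you handle the case $u\ge1$ explicitly with Theorem~\ref{th:limsup} and state the nondegeneracy assumption that the zero count $s+u+\ell=k+m$ requires.
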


\begin{proof}
Let $\SR$ be of type $(s,0,\ell)$.
In the case $N=1$, Theorem~\ref{th:Fredholm} states that $T(A)$ is invertible if and only if its partial index $\ka_{1}$ is zero, which is equivalent to the equality $\wind(A)=0$.
As established in the proof of Theorem~\ref{cr:R}, $\wind(A)=s-k$.
Thus, $T(A)$ is invertible if and only if $s=k$ (and thus $\ell=m$).
Since in the scalar case, $T(\ti{A})$ is invertible if and only if $T(A)$ is invertible, Theorem~\ref{th:low} gives the assertion.
\end{proof}

\section{Stability of space-time finite element methods}\label{sec:application}

In this section, we apply the results from the previous section to study the stability of certain space-time finite element methods based on spline discretizations in time.
We first consider the wave equation and then the Schrödinger equation.

For the temporal discretization, we consider splines over the interval $[0,\TF]$ for a given $\TF>0$.
We fix some common notation.
For $N_{\mesh}\in\bN$, define $h=\TF /N_{\mesh}$ and $t_{\ell}=\ell h$ for $\ell=0,\ldots,N_{\mesh}$.
Let $p\ge 1$ be a prescribed polynomial degree and $0 \le r \le p-1$ the regularity.
We consider the knot vector composed of $p+1$ repetitions of $t_{0}$, $p-r$ repetitions of each $t_{1},\ldots, t_{N_{\mesh}-1}$, and $p+1$ repetitions of $t_{N_{\mesh}}$.
Let $\{\xi^{(p,r)}_{j}\}_{j}$ denote this knot vector, whose length is
\[
2(p+1)+(N_{\mesh}-1)(p-r)=N_{\mesh}(p-r)+r+p+2.
\]

According to the Cox--de Boor recursion formula~\cite{dB01}, the B-splines on the knot vector $\{\xi_{j}^{(p,r)}\}_{j}$ are defined recursively in $k$ as
\begin{equation}\label{eq:41}
\ph_{j}^{(k,r)}(t)=
\begin{cases}
\ds\frac{t-\xi_{j}^{(p,r)}}{\xi^{(p,r)}_{j+k}-\xi^{(p,r)}_{j}}\ph_{j}^{(k-1,r)}(t)
+\ds\frac{\xi^{(p,r)}_{j+k+1}-t}{\xi^{(p,r)}_{j+k+1}-\xi^{(p,r)}_{j+1}}\ph_{j+1}^{(k-1,r)}(t)
&\text{if } t\in [\xi_{j}^{(p,r)},\xi^{(p,r)}_{j+k+1}),\\[3ex]
0 &\text{otherwise},
\end{cases}
\end{equation}
for $j=0,\ldots,N_{\mesh}(p-r)+r$, with $\ph_{j}^{(0,r)}(t)=1$ if $t\in [\xi_{j}^{(p,r)},\xi_{j+1}^{(p,r)})$, and $\ph_{j}^{(0,r)}(t)=0$ otherwise.
The space of splines generated by $\{\ph_{j}^{(p,r)}\}_{j=0}^{N_{\mesh}(p-r)+r}$ is denoted by $S_{h}^{(p,r)}(0,\TF)$.
We also define
\begin{equation}\label{eq:42}
\begin{aligned}
S_{h,0,\bullet}^{(p,r)}(0,\TF)
&=S_{h}^{(p,r)}(0,\TF )\cap H_{0,\bullet}^{1}(0,\TF)
=\spn\{\ph_{j}^{(p,r)}\}_{j=1}^{N_{\mesh}(p-r)+r},\\[2ex]
S_{h,\bullet,0}^{(p,r)}(0,\TF)
&=S_{h}^{(p,r)}(0,\TF )\cap H_{\bullet,0}^{1}(0,\TF)
=\spn\{\ph_{j}^{(p,r)}\}_{j=0}^{N_{\mesh}(p-r)+r-1},
\end{aligned}
\end{equation}
where we set
\begin{align*}
H^{1}_{0,\bullet}(0,\TF )=\{w\in H^{1}(0,\TF ):\ w(0)=0\},\quad
H^{1}_{\bullet,0}(0,\TF )=\{w\in H^{1}(0,\TF ):\ w(\TF)=0\}.
\end{align*}

Let us define the following matrices in $\bR^{n\times n}$ with $n=N_{\mesh}(p-r)+r$:
\begin{equation}\label{eq:43}
\begin{aligned}
\mathbf{M}^{(p,r)}_{h}[\ell,j]
&=\int_{0}^{\TF}\ph_{j}^{(p,r)}(s)\,\ph_{\ell-1}^{(p,r)}(s)\,\d s,\\
\mathbf{B}^{(p,r)}_{h}[\ell,j]
&=\int_{0}^{\TF}\partial_{s}\ph_{j}^{(p,r)}(s)\,\partial_{s}\ph_{\ell-1}^{(p,r)}(s)\,\d s,\\
\mathbf{D}^{(p,r)}_{h}[\ell,j]
&=\int_{0}^{\TF}\partial_{s}^{r+1}\ph_{j}^{(p,r)}(s)\,\partial_{s}^{r+1}\ph_{\ell-1}^{(p,r)}(s)\,\d s,\\
\mathbf{C}^{(p,r)}_{h}[\ell,j]
&=\int_{0}^{\TF}\partial_{s}\ph_{j}^{(p,r)}(s)\,\ph_{\ell-1}^{(p,r)}(s)\,\d s,
\end{aligned}
\end{equation}
for $\ell,j=1,\ldots,n$.
The entries of $\mathbf{M}_{h}^{(p,r)}$ scale like $h$, those of $\mathbf{B}_{h}^{(p,r)}$ like $1/h$, and those of $\mathbf{D}_{h}^{(p,r)}$ like $1/h^{2r+1}$, whereas those of $\mathbf{C}_{h}^{(p,r)}$ do not depend on $h$. This scaling follows from the local support of the B-splines of width $\mathcal{O}(h)$, together with $\partial_{s}\sim h^{-1}$. Therefore, it is natural to define the scaled matrices
\begin{equation}\label{eq:44}
\begin{aligned}
\mathbf{M}_{n}^{(p,r)} &=\frac{1}{h}\,\mathbf{M}_{h}^{(p,r)},\\
\mathbf{B}_{n}^{(p,r)} &=h\,\mathbf{B}_{h}^{(p,r)},\\
\mathbf{D}_{n}^{(p,r)} &=h^{2r+1}\,\mathbf{D}_{h}^{(p,r)},\\
\mathbf{C}_{n}^{(p,r)} &=\mathbf{C}_{h}^{(p,r)},
\end{aligned}
\end{equation}
whose entries are independent of $h$.

\subsection{Wave equation}

Consider the linear acoustic wave equation (\ref{eq:45}).

After multiplying by a test function and integrating by parts in space and time, a space-time variational formulation of~\eqref{eq:45} reads as follows: find
\[
U\in L^{2}(0,\TF ;H^{1}_{0}(\Om))\cap H^{1}_{0,\bullet}(0,\TF ;L^{2}(\Om))
\]
such that
\begin{equation}\label{eq:46}
-(\partial_{t} U,\partial_{t} V)_{L^{2}(Q_{\TF})}
+(\nabla_{\bx} U,\nabla_{\bx} V)_{L^{2}(Q_{\TF})}
=(F, V)_{L^{2}(Q_{\TF})},
\end{equation}
for all
\[
V\in L^{2}(0,\TF ;H^{1}_{0}(\Om))\cap H^{1}_{\bullet,0}(0,\TF ;L^{2}(\Om)).
\]
Here, $(\cdot,\cdot)_{L^{2}(Q_{\TF})}$ denotes the scalar product in $L^{2}(Q_{\TF})$.

We use the standard notation
\[
H^{1}_{0}(\Om)=\{w\in H^{1}(\Om):\ w|_{\partial\Om}=0\},
\]
together with Bochner spaces.
Note that in~\eqref{eq:46} the initial condition $U(\bx,0)=0$ and the homogeneous Dirichlet boundary condition are imposed strongly in the trial space, while the condition $\partial_{t}U(\bx,0)=0$ is incorporated into the variational formulation.

By exploiting the Fourier expansion of the trial and test functions, one can show (see, e.g.,~\cite[\S 5]{StZa20} and references therein) that problem~\eqref{eq:46} admits a unique solution.
More precisely, let $\{\la_{j}\}_{j}$ be the eigenfunctions of the Laplacian operator with homogeneous Dirichlet boundary conditions in $\Om$, orthonormal in $L^{2}(\Om)$.
Any $U\in L^{2}(0,\TF ;H^{1}_{0}(\Om))\cap H^{1}_{0,\bullet}(0,\TF ;L^{2}(\Om))$ admits the representation
\[
U(\bx,t)=\sum_{j=0}^{\nf} u_{j}(t)\la_{j}(\bx)\quad\text{with}\quad u_{j}(t)=\int_{\Om} U(\bx,t)\la_{j}(\bx)\d\bx.
\]
Choosing in~\eqref{eq:46} a test function of the form $v(t)\la_{j}(\bx)$ with $v\in H^{1}_{\bullet,0}(0,\TF)$, it follows that the variational problem~\eqref{eq:46} is equivalent to finding the coefficient functions $u_{j}\in H^{1}_{0,\bullet}(0,\TF)$ such that
\[
-(\partial_{t} u_{j},\partial_{t} v)_{L^{2}(0,\TF )}+\mu_{j}(u_{j}, v)_{L^{2}(0,\TF)}=(f_{j}, v)_{L^{2}(0,\TF)}\quad\text{for all}\quad v\in H^{1}_{\bullet,0}(0,\TF),
\]
where $\{\mu_{j}\}_{j}$ is the non-decreasing, positive, and divergent sequence of eigenvalues of the Dirichlet Laplacian in $\Om$, and $f_{j}(t)=(F(\cdot,t),\la_{j})_{L^{2}(\Om)}$.
This motivates the study of the finite element discretization for the initial value problem with a parameter $\mu>0$: find $u\in H_{0,\bullet}^{1}(0,\TF )$ such that
\begin{equation}\label{eq:47}
a_{\mu}(u,v)=(f,v)_{L^{2}(0,\TF )}\quad\text{for all}\quad v\in H^{1}_{\bullet,0}(0,\TF),
\end{equation}
with $f\in L^{2}(0,\TF)$ and the bilinear form $a_{\mu}: H^{1}_{0,\bullet}(0,\TF)\times H^{1}_{\bullet,0}(0,\TF)\to\bR$ given by
\begin{equation}\label{eq:48}
a_{\mu}(u,v)=-(\partial_{t} u,\partial_{t} v)_{L^{2}(0,\TF)}+\mu (u,v)_{L^{2}(0,\TF)}\quad\text{for}\quad\mu>0.
\end{equation}
If a discretization of~\eqref{eq:48} is stable independently of both the mesh parameter characterizing the finite element subspace and $\mu$, then we expect unconditional stability for the corresponding space-time discretization of the wave problem~\eqref{eq:46}.

\begin{remark}
The linear wave equation considered here, and the Schr\"odinger equation we will embark on in Section~\ref{sec:schro}, serve as prototypes: the analysis does not depend on the specific form of the spatial operator but only on the existence of a Weyl-type asymptotics for its eigenvalues, and it therefore applies equally to other equations sharing this feature.
\end{remark}

It has been shown in~\cite{FeFr26} that a discretization of~\eqref{eq:47} with maximal regularity splines of degree $p\ge 1$ and regularity $C^{p-1}$ on a uniform mesh with mesh size $h$ is stable if and only if
\begin{equation} \label{rho}
\mu h^{2}\le 4\pi^{2}\frac{(2^{2p}-1)}{(2^{2(p+1)}-1)}\frac{\ze(2p)}{\ze(2(p+1))},
\end{equation}
where $\ze$ is the Riemann zeta function.
This mesh condition turns out to be a Courant--Friedrichs--Lewy (CFL) condition of the form $h_{t}<C_{\Om} h_{\bx}$ for the associated space-time variational formulation of the wave equation, with $C_{\Om}>0$ depending on the domain $\Om$, and $h_{t}$ and $h_{\bx}$ denoting the temporal and spatial mesh parameters, respectively.

The discrete counterpart of~\eqref{eq:47} reads as follows: find $u_{h}^{(p,r)}\in S_{h,0,\bullet}^{(p,r)}(0,\TF)$ such that
\begin{equation}\label{eq:49}
-(\partial_{t} u_{h}^{(p,r)},\partial_{t} v_{h}^{(p,r)})_{L^{2}(0,\TF)}+\mu (u_{h}^{(p,r)},v_{h}^{(p,r)})_{L^{2}(0,\TF)}=(f,v_{h}^{(p,r)})_{L^{2}(0,\TF)}
\end{equation}
for all $v_{h}^{(p,r)}\in S_{h,\bullet,0}^{(p,r)}(0,\TF )$.
Here, the discrete spaces $S_{h,0,\bullet}^{(p,r)}(0,\TF)$ and $S_{h,\bullet,0}^{(p,r)}(0,\TF)$ are defined in~\eqref{eq:42}.

The system matrix associated with~\eqref{eq:49}, with respect to the basis introduced in~\eqref{eq:41}, reads
\[
\mathbf{K}_{h,\mu}^{(p,r)}=-\mathbf{B}_{h}^{(p,r)}+\mu\mathbf{M}_{h}^{(p,r)},
\]
with $\mathbf{M}_{h}^{(p,r)}$ and $\mathbf{B}_{h}^{(p,r)}$ as in~\eqref{eq:43}.
Let us define the quantity $\rho=\mu h^{2}$.
The entries of the scaled matrix $\mathbf{K}_{n}^{(p,r)}(\rho)\in\bR^{n\times n}$ given by
\begin{equation} \label{eq:knp}
\mathbf{K}_{n}^{(p,r)}(\rho)=h\mathbf{K}_{h,\mu}^{(p,r)}=-\mathbf{B}_{n}^{(p,r)}+\rho\mathbf{M}_{n}^{(p,r)}
\end{equation}
depend on $\mu$ and $h$ only through $\rho$.
Here, $\mathbf{M}_{n}^{(p,r)}$ and $\mathbf{B}_{n}^{(p,r)}$ are defined in~\eqref{eq:44}.
We are interested in the behavior of the condition number of the family of matrices $\{\mathbf{K}_{n}^{(p,r)}(\rho)\}_{n}$ as $n$ increases, by varying $\rho$.
Employing the results obtained in Section \ref{sec:connection}, we study these behaviors for the cases $(p,r)=(2,0)$, $(p,r)=(3,0)$, and $(p,r)=(3,1)$.
\medskip

\noindent
\textbf{Case} $\boldsymbol{p=2,}$ $\boldsymbol{r=0.}$
The matrices $\mathbf{M}^{(2,0)}_{n},\mathbf{B}^{(2,0)}_{n}\in\bR^{2N_{\mesh}\times 2N_{\mesh}}$ are
\[
\mathbf{M}_{n}^{(2,0)}=\frac{1}{30}
\begin{pmatrix}
\newcommand{\cell}[1]{\makebox[1em][r]{$#1$}}
\newcommand{\lr}[1]{\multicolumn{1}{|r}{\cell{#1}}}
\newcommand{\rr}[1]{\multicolumn{1}{r|}{\cell{#1}}}
\;
\begin{array}{@{}*{10}{c}@{}}
\cline{1-2}
\lr{3} & \rr{1} &&&&&&&& \\
\lr{4} & \rr{3} &&&&&&&& \\
\cline{1-2}\cline{3-4}
\lr{3} & \rr{12} & \cell{3} & \rr{1} &&&&&& \\
\lr{0} & \rr{3} & \cell{4} & \rr{3} &&&&&& \\
\cline{1-2}\cline{3-4}\cline{5-6}
\lr{0} & \rr{1} & \cell{3} & \rr{12} & \cell{3} & \rr{1} &&&& \\
\lr{0} & \rr{0} & \cell{0} & \rr{3} & \cell{4} & \rr{3} &&&& \\
\cline{1-2}\cline{3-4}\cline{5-6}
\multicolumn{2}{c}{\ddots} & \multicolumn{2}{c}{\ddots} & \multicolumn{2}{c}{\ddots} & \multicolumn{2}{c}{\ddots} & \\
\cline{3-4}\cline{5-6}\cline{7-8}
&& \lr{0} & \cell{1} & \lr{3} & \cell{12} & \lr{3} & \rr{1} && \\
&& \lr{0} & \cell{0} & \lr{0} & \cell{3} & \lr{4} & \rr{3} && \\
\cline{3-4}\cline{5-6}\cline{7-8}\cline{9-10}
&&&& \lr{0} & \cell{1} & \lr{3} & \rr{12} & \cell{3} & \rr{1}\\
&&&& \lr{0} & \cell{0} & \lr{0} & \rr{3} & \cell{4} & \rr{3}\\
\cline{5-6}\cline{7-8}\cline{9-10}
\end{array}
\;\;
\end{pmatrix},
\]

\begin{equation}\label{eq:411}
\mathbf{B}_{n}^{(2,0)}=\frac{2}{3}
\begin{pmatrix}
\newcommand{\cell}[1]{\makebox[1em][r]{$#1$}}
\newcommand{\lr}[1]{\multicolumn{1}{|r}{\cell{#1}}}
\newcommand{\rr}[1]{\multicolumn{1}{r|}{\cell{#1}}}
\;
\begin{array}{@{}*{10}{c}@{}}
\cline{1-2}
\lr{-1} & \rr{-1} &&&&&&&& \\
\lr{2} & \rr{-1} &&&&&&&& \\
\cline{1-2}\cline{3-4}
\lr{-1} & \rr{4} & \cell{-1} & \rr{-1} &&&&&& \\
\lr{0} & \rr{-1} & \cell{2} & \rr{-1} &&&&&& \\
\cline{1-2}\cline{3-4}\cline{5-6}
\lr{0} & \rr{-1} & \cell{-1} & \rr{4} & \cell{-1} & \rr{-1} &&&& \\
\lr{0} & \rr{0} & \cell{0} & \rr{-1} & \cell{2} & \rr{-1} &&&& \\
\cline{1-2}\cline{3-4}\cline{5-6}
\multicolumn{2}{c}{\ddots} & \multicolumn{2}{c}{\ddots} & \multicolumn{2}{c}{\ddots} & \multicolumn{2}{c}{\ddots} \\
\cline{3-4}\cline{5-6}\cline{7-8}
&& \lr{0} & \cell{-1} & \lr{-1} & \cell{4} & \lr{-1} & \rr{-1} && \\
&& \lr{0} & \cell{0} & \lr{0} & \cell{-1} & \lr{2} & \rr{-1} && \\
\cline{3-4}\cline{5-6}\cline{7-8}\cline{9-10}
&&&& \lr{0} & \cell{-1} & \lr{-1} & \rr{4} & \cell{-1} & \rr{-1}\\
&&&& \lr{0} & \cell{0} & \lr{0} & \rr{-1} & \cell{2} & \rr{-1}\\
\cline{5-6}\cline{7-8}\cline{9-10}
\end{array}
\;\;
\end{pmatrix}.
\end{equation}

We associate with these matrices the three $2\times 2$ matrices that define the Toeplitz structure
\begin{align}\label{eq:412}
\mathbf{M}_{n}^{(2,0)} &\to
\overbrace{\frac{1}{30}\begin{pmatrix}[r]
0 & 1
\\ 0 & 0
\end{pmatrix}}^{2}
\quad
\overbrace{\frac{1}{30}\begin{pmatrix}[r]
3 & 12
\\ 0 & 3
\end{pmatrix}}^{1}
\quad
\overbrace{\frac{1}{30}\begin{pmatrix}[r]
3 & 1
\\ 4 & 3
\end{pmatrix}}^{0}
\\
\mathbf{B}_{n}^{(2,0)} &\to
\overbrace{\frac{2}{3}\begin{pmatrix}[r]
0 &-1
\\ 0 & 0
\end{pmatrix}}^{2}
\quad
\overbrace{\frac{2}{3}\begin{pmatrix}[r]
-1 & 4
\\ 0 &-1
\end{pmatrix}}^{1}
\quad
\overbrace{\frac{2}{3}\begin{pmatrix}[r]
-1 &-1
\\ 2 &-1
\end{pmatrix}}^{0}
\label{eq:413}
\end{align}
and similarly for $\mathbf{K}_{n}^{(2,0)}(\rho)=-\mathbf{B}_{n}^{(2,0)}+\rho\mathbf{M}_{n}^{(2,0)}$.
Note that, in the notation of Theorem~\ref{cr:R}, we have $N=2$, $m=2$, and $k=0$.
Let $\SR_{\rho}^{(2,0)}$ be the polynomial associated with the matrices $\{\mathbf{K}_{n}^{(2,0)}(\rho)\}_{n}$ as in~\eqref{eq:R}.

\begin{proposition} \label{prop:51}
The polynomial $\SR_{\rho}^{(2,0)}$ is of type
\begin{align*}
&(1,0,1)\quad\text{if and only if}\quad\rho\in (10,12)\cup (60,+\nf),\\
&(0,2,0)\quad\text{if and only if}\quad\rho\in [0,10]\cup [12,60].
\end{align*}
\end{proposition}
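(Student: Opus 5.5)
The plan is to compute $\det\SR_{\rho}^{(2,0)}(t)$ explicitly and locate its zeros. By~\eqref{eq:412}--\eqref{eq:413}, the coefficients of $\SR_{\rho}^{(2,0)}(t)=K_{0}+K_{1}t+K_{2}t^{2}$ are $K_{j}=-B_{j}+\rho M_{j}$, where $M_{j},B_{j}$ are the $2\times2$ blocks labelled $j$ there. Introduce the abbreviations
\[
a=\tfrac23+\tfrac{\rho}{10},\quad b=\tfrac23+\tfrac{\rho}{30},\quad c=-\tfrac43+\tfrac{2\rho}{15},\quad d=-\tfrac83+\tfrac{2\rho}{5}.
\]
Reading off the blocks gives
\[
\SR_{\rho}^{(2,0)}(t)=\begin{pmatrix} a(1+t) & b+dt+bt^{2}\\ c & a(1+t)\end{pmatrix},
\]
and hence
\[
\det\SR_{\rho}^{(2,0)}(t)=(a^{2}-bc)\,t^{2}+(2a^{2}-cd)\,t+(a^{2}-bc).
\]
The key structural observation is that this real quadratic is palindromic. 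Whenever $a^{2}-bc\neq0$, its two roots have product $1$. So either both roots are real and reciprocal, one inside and one outside $\bT$, giving type $(1,0,1)$; or they are complex conjugate with product $1$, or a double root $\pm1$, so both lie on $\bT$, giving type $(0,2,0)$. No other type can occur, which explains why only the two types appear in the statement.

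Deciding between the two cases is therefore a discriminant computation. Both zeros lie on $\bT$ if and only if $(2a^{2}-cd)^{2}-4(a^{2}-bc)^{2}\le0$. This expression factors as
\[
\bigl(2bc-cd\bigr)\bigl(4a^{2}-2bc-cd\bigr)=c\,(2b-d)\,\bigl(4a^{2}-c(d+2b)\bigr).
\]
The first two factors are linear in $\rho$:
\[
c=\tfrac{2}{15}(\rho-10),\qquad 2b-d=4-\tfrac{\rho}{3},
\]
so they vanish at $\rho=10$ and $\rho=12$. The third factor is a quadratic in $\rho$, since $d+2b=-\tfrac43+\tfrac{7\rho}{15}$. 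I expect it to vanish at $\rho=60$ and to be positive for $\rho\in[0,60)$; I will verify this by direct expansion and by checking where its second root lies. A sign chart of the product over $\rho\ge0$ then yields exactly: the product is $\le0$ on $[0,10]\cup[12,60]$ and $>0$ on $(10,12)\cup(60,\nf)$. This is the claimed dichotomy, with the closed endpoints giving double zeros on $\bT$. For example, at $\rho=10$ we have $c=0$ and $\det=a^{2}(1+t)^{2}$.

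The main obstacle is the degenerate situation in which the leading coefficient $a^{2}-bc$ vanishes, or the determinant vanishes identically. In that case the palindromic argument breaks down and the degree of $\det\SR$ drops, so the count $s+u+\ell=2$ would fail. I will check that $a^{2}-bc$, which is a quadratic in $\rho$, has no zeros for $\rho\ge0$. If it did, such isolated values of $\rho$ would have to be treated separately. I also need to confirm the location of the second root of the third factor, so that no additional sign changes occur on $[0,\nf)$. The remaining steps are routine polynomial bookkeeping.
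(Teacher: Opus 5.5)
Your proposal is correct and follows the paper's route: the determinant is the palindromic quadratic $\frac{1}{180}(a_\rho t^2+b_\rho t+a_\rho)$ with $a_\rho=\rho^2+16\rho+240>0$, and your factored discriminant equals $\frac{2}{2025}\rho(\rho-10)(\rho-12)(\rho-60)$, which matches the paper's $32\rho(\rho-10)(\rho-12)(\rho-60)$ after scaling by $180^2$. One small correction: your third factor is $\frac{\rho}{45}(60-\rho)$, so it vanishes at $\rho=0$ (giving a double zero at $t=1$) rather than being positive there, but this does not change the sign chart or the conclusion.
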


\begin{proof}
Recalling~\eqref{eq:412} and~\eqref{eq:413}, we explicitly compute
\[
\det \SR_{\rho}^{(2,0)}(t)=\frac{1}{180}\big(a_{\rho} t^{2}+b_{\rho} t+a_{\rho}\big).
\]
Here,
\[
a_{\rho}=\rho^{2}+16\rho+240,\quad b_{\rho}=-6\rho^{2}+208\rho-480.
\]

To determine the type of $\SR_{\rho}^{(2,0)}$, we analyze the roots of $\det \SR_{\rho}^{(2,0)}$.
The product of the two roots $t_{1},t_{2}$ satisfies $t_{1}t_{2}=1$.
Thus, the location of these roots with respect to the unit circle depends on the sign of the discriminant
\[
b_{\rho}^{2}-4a_{\rho}^{2}=32\rho(\rho-10)(\rho-12)(\rho-60).
\]
We distinguish two cases.
\smallskip

\noindent
\textbf{Case} $\boldsymbol{b_{\rho}^{2}-4a_{\rho}^{2}>0.}$
This occurs when $\rho\in (10,12)\cup (60,+\nf)$.
The quadratic has two distinct real roots.
Since their product is $1$, one root lies strictly inside the unit circle and the other strictly outside.
Thus, the polynomial is of type $(1,0,1)$.
\smallskip

\noindent
\textbf{Case} $\boldsymbol{b_{\rho}^{2}-4a_{\rho}^{2}\le 0.}$
This happens in case $\rho\in [0,10]\cup [12,60]$.
The roots are complex conjugates.
Since they are conjugates and their product is $1$, both roots lie on the unit circle.
Consequently, the polynomial is of type $(0,2,0)$.
\end{proof}

\begin{corollary} \label{cor:52}
If $\rho\in (10,12)\cup (60,+\nf)$, then the condition numbers of $\mathbf{K}_{n}^{(2,0)}(\rho)$ grow at least exponentially, and if $\rho\in [0,10]\cup [12,60]$ then the growth is at most polynomial.
\end{corollary}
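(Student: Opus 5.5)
The corollary follows by combining Proposition~\ref{prop:51} with Theorem~\ref{th:types} (equivalently Theorems~\ref{th:low}, \ref{th:mapol} and~\ref{cr:R}). The key observation is that $\mathbf{K}_{n}^{(2,0)}(\rho)$ is a block Toeplitz matrix with $2\times 2$ blocks occupying only the main block diagonal and two block subdiagonals. Hence it equals $T_{n}(\SR_{\rho}^{(2,0)})$ with $k=0$, $m=2$, and $\SR_{\rho}^{(2,0)}(t)=K_{0}+K_{1}t+K_{2}t^{2}$, where $K_{j}=-\mathbf{B}_{j}+\rho\mathbf{M}_{j}$ are read off from~\eqref{eq:412} and~\eqref{eq:413}. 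We ignore any boundary/low-rank corrections, as announced in the introduction.

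\textbf{Exponential case.} For $\rho\in(10,12)\cup(60,\nf)$, Proposition~\ref{prop:51} gives type $(1,0,1)$. Thus $s=1\ne kN=0$, and Theorem~\ref{cr:R} applies directly. In detail, $\det A$ has no zeros on $\bT$, and $\ind T(A)=-1\neq 0$, so $T(A)$ is not invertible. Theorem~\ref{th:low} then yields $\ka(T_{n})\ge c\e^{\al n}$. This is the third item of Theorem~\ref{th:types}.

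\textbf{Polynomial case.} For $\rho\in[0,10]\cup[12,60]$ the type is $(0,2,0)$. We want to invoke Theorem~\ref{th:mapol}, whose hypotheses are $\det A_{0}\ne0$ and no zeros of $\det A$ strictly inside $\bT$.
\begin{itemize}
\item The second hypothesis holds by the type.
\item For the first, $\det K_{0}=\det\SR_{\rho}^{(2,0)}(0)=a_{\rho}/180$, with $a_{\rho}=\rho^{2}+16\rho+240>0$ for all $\rho\ge0$ (its discriminant $256-960$ is negative). Hence $K_{0}$ is invertible. This also shows that the finite matrices are invertible for every $n$, because they are block lower triangular with diagonal blocks $K_{0}$.
\end{itemize}
Theorem~\ref{th:mapol} then gives $\ka(T_{n})=O(n^{\nu})$. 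This is the second item of Theorem~\ref{th:types}, and Theorem~\ref{th:limsup} adds that the growth is genuinely unbounded.

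\textbf{Main check.} The only step needing care is the identification of $\mathbf{K}_{n}^{(2,0)}(\rho)$ with a pure finite section $T_{n}(\SR_{\rho}^{(2,0)})$ with $k=0$. In~\eqref{eq:411} the first block row carries a modified top-left block. I would check that, after discarding this low-rank correction, the block row pattern is exactly $(K_{2},K_{1},K_{0})$ below and on the diagonal, with nothing above it. I would also note that "polynomial" here means at most polynomial with an unspecified exponent $\nu$; Theorem~\ref{th:mapol} gives no sharp rate.
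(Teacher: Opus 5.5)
Your proposal is correct and follows the paper's own route, which is Proposition~\ref{prop:51} combined with Theorem~\ref{th:types}; your explicit check $\det K_0=a_\rho/180\neq 0$ is harmless but redundant, since type $(0,2,0)$ already forces $\det\SR_\rho^{(2,0)}(0)\neq 0$ because $t=0$ lies strictly inside $\bT$. One correction to your ``main check'': $\mathbf{M}_n^{(2,0)}$ and $\mathbf{B}_n^{(2,0)}$ in \eqref{eq:411} have no modified corner block (the first diagonal block equals all the others), so $\mathbf{K}_n^{(2,0)}(\rho)$ is exactly $T_{N_{\mesh}}(\SR_\rho^{(2,0)})$ with $k=0$, and no low-rank correction needs to be discarded.
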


\begin{proof}
This follows from Theorem~\ref{th:types} and Proposition \ref{prop:51}.
\end{proof}

In Figure \ref{fig:3}, we demonstrate numerically that Corollary \ref{cor:52} is sharp. Peaks of the condition number are obtained exactly when $\rho \in (10,12) \cup (60,+\infty)$. We remark that these results are consistent with those obtained in \cite[\S 3]{Ha24}.

\begin{figure}
    \centering
    \includegraphics[width=0.65\linewidth]{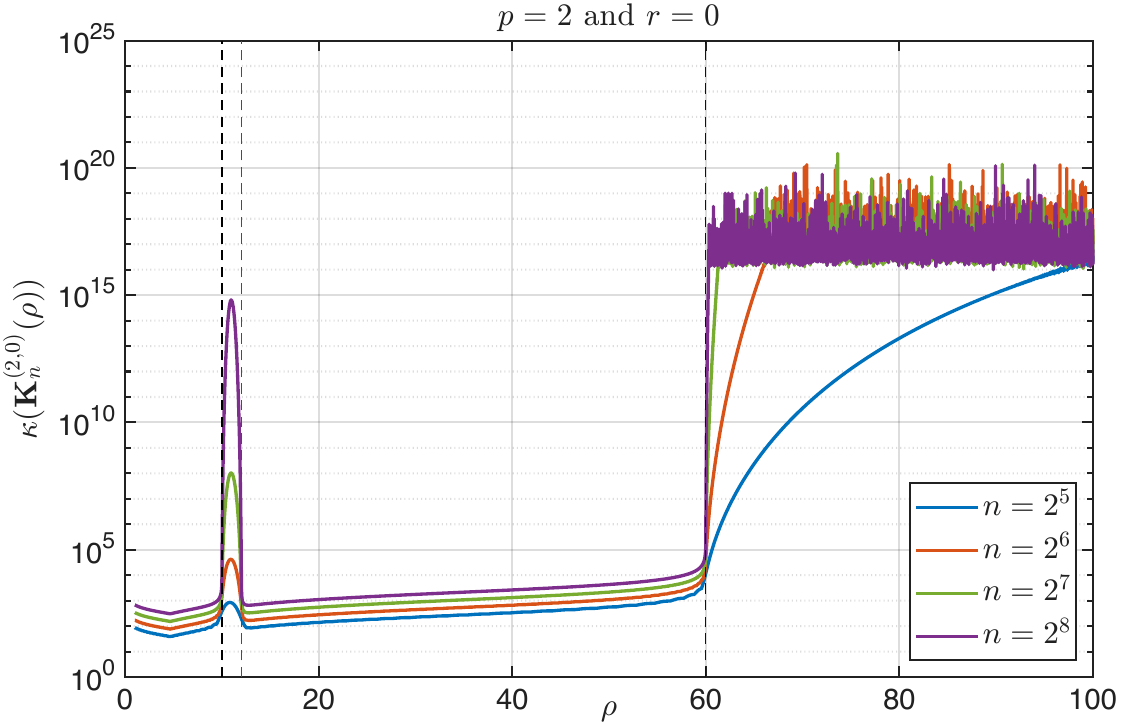}
    \caption{Condition numbers of the matrices $\mathbf{K}_n^{(2,0)}(\rho)$ defined in \eqref{eq:knp} by varying $\rho$ and~$n$. Thresholds correspond to $\rho \in \{10,12,60\}$.}
    \label{fig:3}
\end{figure}

\medskip
\noindent
\textbf{Case} $\boldsymbol{p=3,}$ $\boldsymbol{r=0.}$
The matrices $\mathbf{M}^{(3,0)}_{n},\mathbf{B}^{(3,0)}_{n}\in\bR^{3N_{\mesh}\times 3N_{\mesh}}$ are

{\small\[
\mathbf{M}_{n}^{(3,0)}=\frac{1}{140}
\begin{pmatrix}
\newcommand{\cell}[1]{\makebox[0.9em][r]{$#1$}}
\newcommand{\lr}[1]{\multicolumn{1}{|r}{\cell{#1}}}
\newcommand{\rr}[1]{\multicolumn{1}{r|}{\cell{#1}}}
\;
\begin{array}{@{}*{15}{c}@{}}
\cline{1-3}
\lr{10} & \cell{4} & \rr{1} &&&&&&&&&&&& \\
\lr{12} & \cell{9} & \rr{4} &&&&&&&&&&&& \\
\lr{9} & \cell{12} & \rr{10} &&&&&&&&&&&& \\
\cline{1-3}\cline{4-6}
\lr{4} & \cell{10} & \rr{40} & \cell{10} & \cell{4} & \rr{1} &&&&&&&&& \\
\lr{0} & \cell{0} & \rr{10} & \cell{12} & \cell{9} & \rr{4} &&&&&&&&& \\
\lr{0} & \cell{0} & \rr{4} & \cell{9} & \cell{12} & \rr{10} &&&&&&&&& \\
\cline{1-3}\cline{4-6}\cline{7-9}
\lr{0} & \cell{0} & \rr{1} & \cell{4} & \cell{10} & \rr{40} & \cell{10} & \cell{4} & \rr{1} &&&&&& \\
\lr{0} & \cell{0} & \rr{0} & \cell{0} & \cell{0} & \rr{10} & \cell{12} & \cell{9} & \rr{4} &&&&&& \\
\lr{0} & \cell{0} & \rr{0} & \cell{0} & \cell{0} & \rr{4} & \cell{9} & \cell{12} & \rr{10} &&&&&& \\
\cline{1-3}\cline{4-6}\cline{7-9}
\multicolumn{3}{c}{\ddots} & \multicolumn{3}{c}{\ddots} & \multicolumn{3}{c}{\ddots} & \multicolumn{3}{c}{\ddots} & \\
\cline{4-6}\cline{7-9}\cline{10-12}
&&& \lr{0} & \cell{0} & \cell{1} & \lr{4} & \cell{10} & \cell{40} & \lr{10} & \cell{4} & \rr{1} &&& \\
&&& \lr{0} & \cell{0} & \cell{0} & \lr{0} & \cell{0} & \cell{10} & \lr{12} & \cell{9} & \rr{4} &&& \\
&&& \lr{0} & \cell{0} & \cell{0} & \lr{0} & \cell{0} & \cell{4} & \lr{9} & \cell{12} & \rr{10} &&& \\
\cline{4-6}\cline{7-9}\cline{10-12}\cline{13-15}
&&&&&& \lr{0} & \cell{0} & \cell{1} & \lr{4} & \cell{10} & \rr{40} & \cell{10} & \cell{4} & \rr{1}\\
&&&&&& \lr{0} & \cell{0} & \cell{0} & \lr{0} & \cell{0} & \rr{10} & \cell{12} & \cell{9} & \rr{4}\\
&&&&&& \lr{0} & \cell{0} & \cell{0} & \lr{0} & \cell{0} & \rr{4} & \cell{9} & \cell{12} & \rr{10}\\
\cline{7-9}\cline{10-12}\cline{13-15}
\end{array}
\;\;
\end{pmatrix},
\]}

{\small\begin{equation}\label{eq:415}
\mathbf{B}_{n}^{(3,0)}=\frac{3}{10}
\begin{pmatrix}
\newcommand{\cell}[1]{\makebox[0.9em][r]{$#1$}}
\newcommand{\lr}[1]{\multicolumn{1}{|r}{\cell{#1}}}
\newcommand{\rr}[1]{\multicolumn{1}{r|}{\cell{#1}}}
\;
\begin{array}{@{}*{15}{c}@{}}
\cline{1-3}
\lr{-3} & \cell{-2} & \rr{-1} &&&&&&&&&&&& \\
\lr{4} & \cell{1} & \rr{-2} &&&&&&&&&&&& \\
\lr{1} & \cell{4} & \rr{-3} &&&&&&&&&&&& \\
\cline{1-3}\cline{4-6}
\lr{-2} & \cell{-3} & \rr{12} & \cell{-3} & \cell{-2} & \rr{-1} &&&&&&&&& \\
\lr{0} & \cell{0} & \rr{-3} & \cell{4} & \cell{1} & \rr{-2} &&&&&&&&& \\
\lr{0} & \cell{0} & \rr{-2} & \cell{1} & \cell{4} & \rr{-3} &&&&&&&&& \\
\cline{1-3}\cline{4-6}\cline{7-9}
\lr{0} & \cell{0} & \rr{-1} & \cell{-2} & \cell{-3} & \rr{12} & \cell{-3} & \cell{-2} & \rr{-1} &&&&&& \\
\lr{0} & \cell{0} & \rr{0} & \cell{0} & \cell{0} & \rr{-3} & \cell{4} & \cell{1} & \rr{-2} &&&&&& \\
\lr{0} & \cell{0} & \rr{0} & \cell{0} & \cell{0} & \rr{-2} & \cell{1} & \cell{4} & \rr{-3} &&&&&& \\
\cline{1-3}\cline{4-6}\cline{7-9}
\multicolumn{3}{c}{\ddots} & \multicolumn{3}{c}{\ddots} & \multicolumn{3}{c}{\ddots} & \multicolumn{3}{c}{\ddots} & \\
\cline{4-6}\cline{7-9}\cline{10-12}
&&& \lr{0} & \cell{0} & \cell{-1} & \lr{-2} & \cell{-3} & \cell{12} & \lr{-3} & \cell{-2} & \rr{-1} &&& \\
&&& \lr{0} & \cell{0} & \cell{0} & \lr{0} & \cell{0} & \cell{-3} & \lr{4} & \cell{1} & \rr{-2} &&& \\
&&& \lr{0} & \cell{0} & \cell{0} & \lr{0} & \cell{0} & \cell{-2} & \lr{1} & \cell{4} & \rr{-3} &&& \\
\cline{4-6}\cline{7-9}\cline{10-12}\cline{13-15}
&&&&&& \lr{0} & \cell{0} & \cell{-1} & \lr{-2} & \cell{-3} & \rr{12} & \cell{-3} & \cell{-2} & \rr{-1}\\
&&&&&& \lr{0} & \cell{0} & \cell{0} & \lr{0} & \cell{0} & \rr{-3} & \cell{4} & \cell{1} & \rr{-2}\\
&&&&&& \lr{0} & \cell{0} & \cell{0} & \lr{0} & \cell{0} & \rr{-2} & \cell{1} & \cell{4} & \rr{-3}\\
\cline{7-9}\cline{10-12}\cline{13-15}
\end{array}
\;\;
\end{pmatrix}.
\end{equation}}

We associate with these matrices the three $3\times 3$ matrices that define the Toeplitz structure
\begin{align}\label{eq:416}
\mathbf{M}_{n}^{(3,0)} &\to
\overbrace{\frac{1}{140}\begin{pmatrix}[r]
0 & 0 & 1
\\ 0 & 0 & 0
\\ 0 & 0 & 0
\end{pmatrix}}^{2}
\overbrace{\frac{1}{140}\begin{pmatrix}[r]
4 & 10 & 40
\\ 0 & 0 & 10
\\ 0 & 0 & 4
\end{pmatrix}}^{1}
\quad
\overbrace{\frac{1}{140}\begin{pmatrix}[r]
10 & 4 & 1
\\ 12 & 9 & 4
\\ 9 & 12 & 10
\end{pmatrix}}^{0}
\\
\mathbf{B}_{n}^{(3,0)} &\to
\overbrace{\frac{3}{10}\begin{pmatrix}[r]
0 & 0 &-1
\\ 0 & 0 & 0
\\ 0 & 0 & 0
\end{pmatrix}}^{2}
\quad
\overbrace{\frac{3}{10}\begin{pmatrix}[r]
-2 &-3 & 12
\\ 0 & 0 &-3
\\ 0 & 0 &-2
\end{pmatrix}}^{1}
\quad
\overbrace{\frac{3}{10}\begin{pmatrix}[r]
-3 &-2 &-1
\\ 4 & 1 &-2
\\ 1 & 4 &-3
\end{pmatrix}}^{0}
\label{eq:417}
\end{align}
and similarly for $\mathbf{K}_{n}^{(3,0)}(\rho)=-\mathbf{B}_{n}^{(3,0)}+\rho\mathbf{M}_{n}^{(3,0)}$.
Here, in the notation of Theorem~\ref{cr:R}, we have $N=3$, $m=3$, and $k=0$.

\begin{proposition} \label{prop:53}
The polynomial $\SR_{\rho}^{(3,0)}$ is of type
\begin{align*}
&(1,0,1)\quad\text{if and only if}\quad\rho\in (90-2\sqrt{1605},10)\cup (42,60)\cup (90+2\sqrt{1605},+\nf),\\
&(0,2,0)\quad\text{if and only if}\quad\rho\in [0,90-2\sqrt{1605}]\cup [10,42]\cup [60,90+2\sqrt{1605}].
\end{align*}
\end{proposition}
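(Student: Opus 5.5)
The plan mirrors the proof of Proposition~\ref{prop:51}. First I will assemble the matrix polynomial
\[
\SR_{\rho}^{(3,0)}(t)=K_{0}+K_{1}t+K_{2}t^{2},\qquad K_{j}=-B_{j}+\rho M_{j},
\]
where $M_{j},B_{j}$ are the $3\times 3$ blocks in \eqref{eq:416} and \eqref{eq:417}, indexed by the overbraced numbers. Then I will compute $\det\SR_{\rho}^{(3,0)}(t)$.

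A priori this is a polynomial of degree up to $6$ in $t$. However, $K_{2}$ has rank one, with only a $(1,3)$ entry, and $K_{1}$ has nonzero entries only in its first row and third column. Expanding along the structure, most high powers of $t$ should cancel. The expected outcome, in view of the stated type with only two zeros in total, is that
\[
\det\SR_{\rho}^{(3,0)}(t)=c\,(a_{\rho}t^{2}+b_{\rho}t+a_{\rho})
\]
with a constant $c$, where $a_{\rho}$ and $b_{\rho}$ are now cubic polynomials in $\rho$.

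The palindromic form should again come from the symmetry of the B-spline basis under time reversal $t\mapsto \TF-t$, combined with the anti-symmetry of $\partial_t$ and the index shift $\ell-1$. I would verify it directly from the computed coefficients rather than rely on that heuristic. I expect this determinant computation to be the main obstacle, since it is error-prone. I would carry it out symbolically, for instance with cofactor expansion along the first row, where the $t$-dependence is concentrated, and then double-check it by evaluating at a few values of $\rho$ and $t$.

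Once the palindromic quadratic is in hand, the argument repeats Proposition~\ref{prop:51}. The roots satisfy $t_{1}t_{2}=1$, since the leading and constant coefficients coincide. I also need $a_{\rho}\neq 0$ for $\rho\ge 0$, which should follow because $a_{\rho}$ has positive coefficients, or else by a direct check.

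Real distinct roots occur exactly when $b_{\rho}^{2}-4a_{\rho}^{2}>0$. In that case one root lies strictly inside $\bT$ and one strictly outside, so the type is $(1,0,1)$. Otherwise the roots are a complex-conjugate or double pair with product $1$, so both lie on $\bT$ and the type is $(0,2,0)$.

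It remains to factor the discriminant $b_{\rho}^{2}-4a_{\rho}^{2}=(b_{\rho}-2a_{\rho})(b_{\rho}+2a_{\rho})$. Each factor is a cubic in $\rho$. I expect that, after removing the constant factor, one of them equals $\rho(\rho-10)(\rho-42)$ up to sign, or a similar product, and that the other contains the roots $60$ and $90\pm2\sqrt{1605}$, i.e.\ the factor $\rho^{2}-180\rho+1680$. Since the discriminant has degree $6$, the six roots should be $0$, $10$, $42$, $60$ and $90\pm2\sqrt{1605}$, with $90-2\sqrt{1605}\approx 9.87$. A sign analysis on $[0,\infty)$, using the sign of the leading coefficient to determine the behavior for large $\rho$, then yields the stated intervals. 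The endpoints belong to the $(0,2,0)$ case because there the discriminant vanishes and $t=\pm1$ is a double root on $\bT$.
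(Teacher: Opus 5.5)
Your plan is correct and is essentially the paper's proof. Both compute $\det\SR_{\rho}^{(3,0)}(t)=\frac{3}{56000}(a_{\rho}t^{2}+b_{\rho}t+a_{\rho})$, use that the two roots have product $1$, and run a sign analysis of $b_{\rho}^{2}-4a_{\rho}^{2}=60\rho(\rho-10)(\rho-42)(\rho-60)(\rho^{2}-180\rho+1680)$. One small correction to your predicted grouping, which does not affect the argument: in fact $b_{\rho}+2a_{\rho}=10\rho(\rho-42)(\rho-60)$ and $b_{\rho}-2a_{\rho}=6(\rho-10)(\rho^{2}-180\rho+1680)$.
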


\begin{proof}
We explicitly compute, using~\eqref{eq:416} and~\eqref{eq:417},
\[
\det \SR_{\rho}^{(3,0)}(t)=\frac{3}{56000}\big(a_{\rho} t^{2}+b_{\rho} t+a_{\rho}\big).
\]
Here,
\[
a_{\rho}=\rho^{3}+30\rho^{2}+1080\rho+25200,\quad b_{\rho}=8\rho^{3}-1080\rho^{2}+23040\rho-50400.
\]
The product of the two roots is equal to $1$.
The location of these roots with respect to the unit circle depends on the sign of the discriminant
\[
b_{\rho}^{2}-4a_{\rho}^{2}=60\rho(\rho-10)(\rho-42)(\rho-60)(\rho-90+2\sqrt{1605})(\rho-90-2\sqrt{1605}).
\]
Noting that $90-2\sqrt{1605}<10$ and $60<90+2\sqrt{1605}$, an analysis of the sign of $b_{\rho}^{2}-4a_{\rho}^{2}$ yields two distinct cases.
\smallskip

\noindent
\textbf{Case} $\boldsymbol{b_{\rho}^{2}-4a_{\rho}^{2}>0.}$
This occurs when the number of negative factors is even, which corresponds to the intervals $\rho\in (90-2\sqrt{1605},10)\cup (42,60)\cup (90+2\sqrt{1605},+\nf)$.
The quadratic has two distinct real roots.
Since their product is $1$, one root lies strictly inside the unit circle and the other strictly outside.
Thus, $\SR_{\rho}^{(3,0)}$ is of type $(1,0,1)$.
\smallskip

\noindent
\textbf{Case} $\boldsymbol{b_{\rho}^{2}-4a_{\rho}^{2}\le 0.}$
This is the case for $\rho\in [0,90-2\sqrt{1605}]\cup [10,42]\cup [60,90+2\sqrt{1605}]$.
The roots are complex conjugates.
As their product is $1$, both lie on the unit circle and hence $\SR_{\rho}^{(3,0)}$ is of type $(0,2,0)$.
\end{proof}

\begin{corollary} \label{cor:54}
If
\[
\rho\in (90-2\sqrt{1605},10)\cup (42,60)\cup (90+2\sqrt{1605},+\nf),
\]
then the condition numbers of $\mathbf{K}_{n}^{(3,0)}(\rho)$ grow at least exponentially, whereas for
\[
\rho\in (0,90-2\sqrt{1605}]\cup [10,42]\cup [60,90+2\sqrt{1605}]
\]
their growth is at most polynomial.
\end{corollary}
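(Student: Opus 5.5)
The plan is to argue exactly as for Corollary~\ref{cor:52}: combine Proposition~\ref{prop:53} with Theorem~\ref{th:types}. In the notation of Section~\ref{sec:connection}, the matrices $\mathbf{K}_{n}^{(3,0)}(\rho)=-\mathbf{B}_{n}^{(3,0)}+\rho\mathbf{M}_{n}^{(3,0)}$ are (pure) block Toeplitz matrices $T_{n}(\SR_{\rho}^{(3,0)})$. Their blocks are $N=3$, with $k=0$ and $m=2$, read off from \eqref{eq:416}--\eqref{eq:417}. The symbol is therefore the matrix polynomial $\SR_{\rho}^{(3,0)}(t)=K_{0}+K_{1}t+K_{2}t^{2}$, where $K_{j}$ is the combination of the blocks labelled $j$.

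For the exponential regime, take $\rho$ in one of the three open intervals. Proposition~\ref{prop:53} gives that $\SR_{\rho}^{(3,0)}$ is of type $(1,0,1)$. Then $\det\SR_{\rho}^{(3,0)}$ has no zeros on $\bT$ and $s=1\ne kN=0$. Theorem~\ref{cr:R}, or equivalently the third item of Theorem~\ref{th:types}, yields $\ka(\mathbf{K}_{n}^{(3,0)}(\rho))\ge c\e^{\al n}$.

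For the polynomial regime, take $\rho$ in the closed intervals with $\rho>0$. Proposition~\ref{prop:53} gives type $(0,2,0)$, and the second item of Theorem~\ref{th:types} applies. It rests on Theorem~\ref{th:mapol}, so I must check its hypothesis $\det K_{0}\ne 0$.

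\begin{itemize}
\item First I would compute $\det K_{0}$, where $K_{0}=-\tfrac{3}{10}B_{0}+\tfrac{\rho}{140}M_{0}$ is the $t^{0}$ coefficient. Since $\det\SR_{\rho}^{(3,0)}(0)=\tfrac{3}{56000}a_{\rho}$, this is the constant term of the determinant.
\item Next I would note that $a_{\rho}=\rho^{3}+30\rho^{2}+1080\rho+25200>0$ for $\rho\ge 0$. Hence $\det K_{0}\ne 0$, every $T_{n}$ is invertible, and $\det\SR$ has no zeros inside $\bT$ (there are none at all off $\bT$).
\item Theorem~\ref{th:mapol} then gives $\ka=O(n^{\nu})$.
\end{itemize}

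The main point needing care is this verification of the hypotheses. First, the degree of $\det\SR$ drops to $2$ from the nominal $3N\cdot\ldots$ because $K_{2}$ has rank one; the type is nonetheless correctly counted by Proposition~\ref{prop:53}, since $a_{\rho}\ne 0$ means no roots escape to $0$ or $\infty$. Second, the endpoint $\rho=0$ is excluded here, presumably deliberately: the statement's intervals start at $(0,\ldots]$, so I would just not treat it.

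The remaining caveat, which I would state, is the boundary case $b_{\rho}^{2}=4a_{\rho}^{2}$. There the double root $\pm1$ lies on $\bT$, and it is still covered by type $(0,2,0)$ and Theorem~\ref{th:mapol}.
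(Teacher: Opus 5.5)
Your proposal is correct and is essentially the paper's argument, which simply combines Proposition~\ref{prop:53} with Theorem~\ref{th:types}: Theorem~\ref{cr:R} handles type $(1,0,1)$ and Theorem~\ref{th:mapol} handles type $(0,2,0)$. Your check that $\det K_{0}=\tfrac{3}{56000}a_{\rho}\ne 0$ is fine but automatic, since type $(0,u,\ell)$ already excludes a zero of $\det\SR$ at $t=0$; your aside attributing the drop in the degree of $\det\SR$ to $K_{2}$ having rank one is incomplete (that alone only bounds the degree by $4$), but nothing in your argument depends on it.
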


\begin{proof}
This is again immediate from Proposition \ref{prop:53} and Theorem~\ref{th:types}.
\end{proof}

In Figure \ref{fig:4}, we show the numerically computed condition numbers of $\mathbf{K}_n^{(3,0)}(\rho)$ for various $\rho$ and $n$, demonstrating that Corollary \ref{cor:54} is sharp. Peaks of the condition number are obtained close to $\rho \approx 10$, for $\rho \in (42,60)$ and for $\rho > 90 + 2\sqrt{1605} \approx 170$.

\begin{figure}
    \centering
        \centering
        \includegraphics[width=0.7\linewidth]{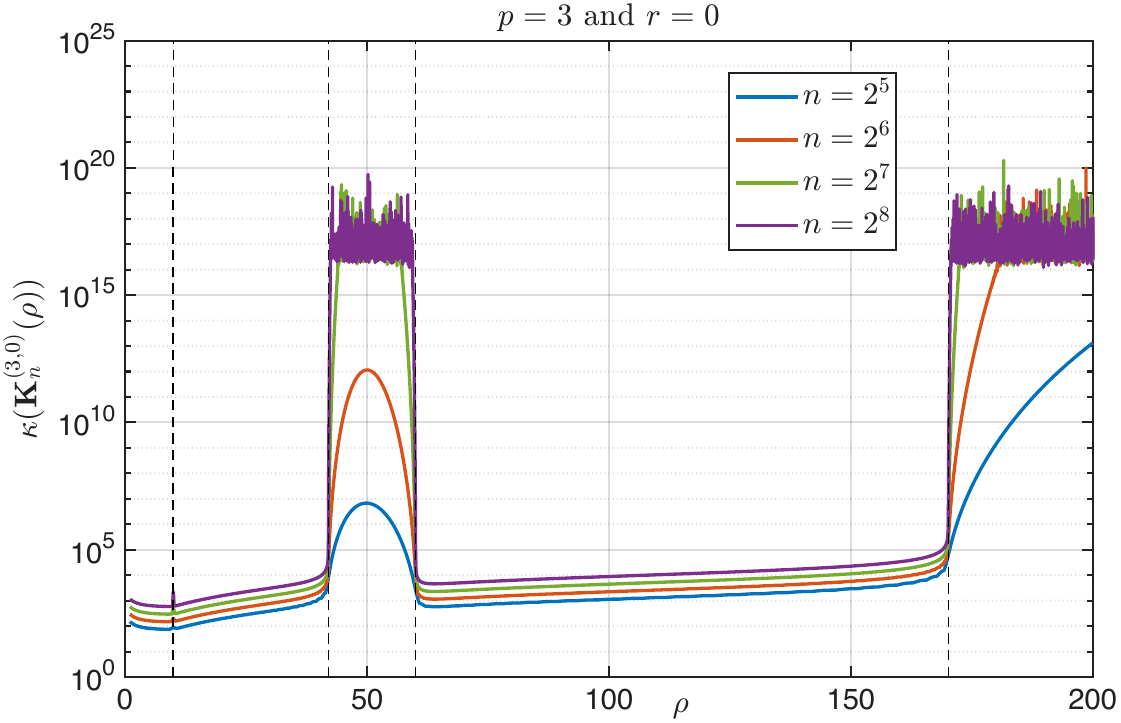}
    \caption{Condition numbers of the matrices $\mathbf{K}_n^{(3,0)}(\rho)$ defined in \eqref{eq:knp} by varying $\rho$ and~$n$. Thresholds correspond to $\rho \in \{90-2\sqrt{1605}, 10, 42, 60, 90+2\sqrt{1605}\}$.}
    \label{fig:4}
\end{figure}

\medskip
\noindent
\textbf{Case} $\boldsymbol{p=3,}$ $\boldsymbol{r=1.}$
The matrices $\mathbf{M}_{n}^{(3,1)},\mathbf{B}_{n}^{(3,1)}\in\bR^{(2N_{\mesh}+1)\times (2N_{\mesh}+1)}$ are

\begin{equation}\label{eq:418}
\mathbf{M}_{n}^{(3,1)}=\frac{1}{560}
\begin{pmatrix}
\newcommand{\cell}[1]{\makebox[1.4em][r]{$#1$}}
\newcommand{\lr}[1]{\multicolumn{1}{|r}{\cell{#1}}}
\newcommand{\rr}[1]{\multicolumn{1}{r|}{\cell{#1}}}
\;
\begin{array}{@{}*{11}{c}@{}}
\cline{1-2}\cline{3-4}
\lr{40} & \cell{18} & \lr{2} & \rr{0} &&&&&&& \\
\lr{48} & \cell{44} & \lr{8} & \rr{0} &&&&&&& \\
\cline{1-2}\cline{3-4}\cline{5-6}
\lr{44} & \cell{128} & \lr{80} & \rr{9} & \cell{1} & \rr{0} &&&&& \\
\lr{8} & \cell{80} & \lr{128} & \rr{53} & \cell{9} & \rr{0} &&&&& \\
\cline{1-2}\cline{3-4}\cline{5-6}\cline{7-8}
\lr{0} & \cell{9} & \lr{53} & \rr{128} & \cell{80} & \rr{9} & \cell{1} & \rr{0} &&& \\
\lr{0} & \cell{1} & \lr{9} & \rr{80} & \cell{128} & \rr{53} & \cell{9} & \rr{0} &&& \\
\cline{1-2}\cline{3-4}\cline{5-6}\cline{7-8}
\multicolumn{2}{c}{\ddots} & \multicolumn{2}{c}{\ddots} & \multicolumn{2}{c}{\ddots} & \multicolumn{2}{c}{\ddots} & \multicolumn{2}{c}{\ddots} & \\
\cline{3-4}\cline{5-6}\cline{7-8}\cline{9-10}
&& \lr{0} & \cell{9} & \lr{53} & \cell{128} & \lr{80} & \cell{9} & \lr{1} & \rr{0} & \\
&& \lr{0} & \cell{1} & \lr{9} & \cell{80} & \lr{128} & \cell{53} & \lr{9} & \rr{0} & \\
\cline{3-4}\cline{5-6}\cline{7-8}\cline{9-10}
&&&& \lr{0} & \cell{9} & \lr{53} & \cell{128} & \lr{80} & \rr{8} & \cell{2} \\
&&&& \lr{0} & \cell{1} & \lr{9} & \cell{80} & \lr{128} & \rr{44} & \cell{18} \\
\cline{5-6}\cline{7-8}\cline{9-10}
&&&&&& \cell{0} & \cell{8} & \cell{44} & \cell{48} & \cell{40} \\
\end{array}
\;\;
\end{pmatrix},
\end{equation}

\begin{equation}\label{eq:419}
\mathbf{B}_{n}^{(3,1)}=\frac{3}{40}
\begin{pmatrix}
\newcommand{\cell}[1]{\makebox[1.4em][r]{$#1$}}
\newcommand{\lr}[1]{\multicolumn{1}{|r}{\cell{#1}}}
\newcommand{\rr}[1]{\multicolumn{1}{r|}{\cell{#1}}}
\;
\begin{array}{@{}*{11}{c}@{}}
\cline{1-2}\cline{3-4}
\lr{-12} & \cell{-10} & \lr{-2} & \rr{0} &&&&&&& \\
\lr{16} & \cell{0} & \lr{-4} & \rr{0} &&&&&&& \\
\cline{1-2}\cline{3-4}\cline{5-6}
\lr{0} & \cell{16} & \lr{0} & \rr{-5} & \cell{-1} & \rr{0} &&&&& \\
\lr{-4} & \cell{0} & \lr{16} & \rr{-5} & \cell{-5} & \rr{0} &&&&& \\
\cline{1-2}\cline{3-4}\cline{5-6}\cline{7-8}
\lr{0} & \cell{-5} & \lr{-5} & \rr{16} & \cell{0} & \rr{-5} & \cell{-1} & \rr{0} &&& \\
\lr{0} & \cell{-1} & \lr{-5} & \rr{0} & \cell{16} & \rr{-5} & \cell{-5} & \rr{0} &&& \\
\cline{1-2}\cline{3-4}\cline{5-6}\cline{7-8}
\multicolumn{2}{c}{\ddots} & \multicolumn{2}{c}{\ddots} & \multicolumn{2}{c}{\ddots} & \multicolumn{2}{c}{\ddots} & \multicolumn{2}{c}{\ddots} & \\
\cline{3-4}\cline{5-6}\cline{7-8}\cline{9-10}
&& \lr{0} & \cell{-5} & \lr{-5} & \cell{16} & \lr{0} & \cell{-5} & \lr{-1} & \rr{0} & \\
&& \lr{0} & \cell{-1} & \lr{-5} & \cell{0} & \lr{16} & \cell{-5} & \lr{-5} & \rr{0} & \\
\cline{3-4}\cline{5-6}\cline{7-8}\cline{9-10}
&&&& \lr{0} & \cell{-5} & \lr{-5} & \cell{16} & \lr{0} & \rr{-4} & \cell{-2} \\
&&&& \lr{0} & \cell{-1} & \lr{-5} & \cell{0} & \lr{16} & \rr{0} & \cell{-10} \\
\cline{5-6}\cline{7-8}\cline{9-10}
&&&&&& \cell{0} & \cell{-4} & \cell{0} & \cell{16} & \cell{-12} \\
\end{array}
\;\;
\end{pmatrix}.
\end{equation}

Note that these matrices do not strictly fit the framework of Theorem~\ref{cr:R} due to perturbations in the top-left and bottom-right corners, as well as an additional spurious row and column.
Nevertheless, we can analyze their pure block Toeplitz band extensions.
We consider the matrices $\widetilde{\mathbf{M}}_{n}^{(3,1)},\widetilde{\mathbf{B}}_{n}^{(3,1)}\in\bR^{2N_{\mesh}\times 2N_{\mesh}}$
\begin{equation}\label{eq:420}
\widetilde{\mathbf{M}}_{n}^{(3,1)}=\frac{1}{560}
\begin{pmatrix}
\newcommand{\cell}[1]{\makebox[1.4em][r]{$#1$}}
\newcommand{\lr}[1]{\multicolumn{1}{|r}{\cell{#1}}}
\newcommand{\rr}[1]{\multicolumn{1}{r|}{\cell{#1}}}
\;
\begin{array}{@{}*{10}{c}@{}}
\cline{1-2}\cline{3-4}
\lr{80} & \cell{9} & \lr{1} & \rr{0} &&&&&& \\
\lr{128} & \cell{53} & \lr{9} & \rr{0} &&&&&& \\
\cline{1-2}\cline{3-4}\cline{5-6}
\lr{53} & \cell{128} & \lr{80} & \rr{9} & \cell{1} & \rr{0} &&&& \\
\lr{9} & \cell{80} & \lr{128} & \rr{53} & \cell{9} & \rr{0} &&&& \\
\cline{1-2}\cline{3-4}\cline{5-6}\cline{7-8}
\lr{0} & \cell{9} & \lr{53} & \rr{128} & \cell{80} & \rr{9} & \cell{1} & \rr{0} && \\
\lr{0} & \cell{1} & \lr{9} & \rr{80} & \cell{128} & \rr{53} & \cell{9} & \rr{0} && \\
\cline{1-2}\cline{3-4}\cline{5-6}\cline{7-8}
\multicolumn{2}{c}{\ddots} & \multicolumn{2}{c}{\ddots} & \multicolumn{2}{c}{\ddots} & \multicolumn{2}{c}{\ddots} & \multicolumn{2}{c}{\ddots}\\
\cline{3-4}\cline{5-6}\cline{7-8}\cline{9-10}
&& \lr{0} & \cell{9} & \lr{53} & \cell{128} & \lr{80} & \cell{9} & \lr{1} & \rr{0}\\
&& \lr{0} & \cell{1} & \lr{9} & \cell{80} & \lr{128} & \cell{53} & \lr{9} & \rr{0}\\
\cline{3-4}\cline{5-6}\cline{7-8}\cline{9-10}
&&&& \lr{0} & \cell{9} & \lr{53} & \cell{128} & \lr{80} & \rr{9}\\
&&&& \lr{0} & \cell{1} & \lr{9} & \cell{80} & \lr{128} & \rr{53}\\
\cline{5-6}\cline{7-8}\cline{9-10}
\end{array}
\;\;
\end{pmatrix},
\end{equation}

\begin{equation}\label{eq:421}
\widetilde{\mathbf{B}}_{n}^{(3,1)}=\frac{3}{40}
\begin{pmatrix}
\newcommand{\cell}[1]{\makebox[1.4em][r]{$#1$}}
\newcommand{\lr}[1]{\multicolumn{1}{|r}{\cell{#1}}}
\newcommand{\rr}[1]{\multicolumn{1}{r|}{\cell{#1}}}
\;
\begin{array}{@{}*{11}{c}@{}}
\cline{1-2}\cline{3-4}
\lr{0} & \cell{-5} & \lr{-1} & \rr{0} &&&&&&& \\
\lr{16} & \cell{-5} & \lr{-5} & \rr{0} &&&&&&& \\
\cline{1-2}\cline{3-4}\cline{5-6}
\lr{-5} & \cell{16} & \lr{0} & \rr{-5} & \cell{-1} & \rr{0} &&&&& \\
\lr{-5} & \cell{0} & \lr{16} & \rr{-5} & \cell{-5} & \rr{0} &&&&& \\
\cline{1-2}\cline{3-4}\cline{5-6}\cline{7-8}
\lr{0} & \cell{-5} & \lr{-5} & \rr{16} & \cell{0} & \rr{-5} & \cell{-1} & \rr{0} &&& \\
\lr{0} & \cell{-1} & \lr{-5} & \rr{0} & \cell{16} & \rr{-5} & \cell{-5} & \rr{0} &&& \\
\cline{1-2}\cline{3-4}\cline{5-6}\cline{7-8}
\multicolumn{2}{c}{\ddots} & \multicolumn{2}{c}{\ddots} & \multicolumn{2}{c}{\ddots} & \multicolumn{2}{c}{\ddots} & \multicolumn{2}{c}{\ddots} & \\
\cline{3-4}\cline{5-6}\cline{7-8}\cline{9-10}
&& \lr{0} & \cell{-5} & \lr{-5} & \cell{16} & \lr{0} & \cell{-5} & \lr{-1} & \rr{0} & \\
&& \lr{0} & \cell{-1} & \lr{-5} & \cell{0} & \lr{16} & \cell{-5} & \lr{-5} & \rr{0} & \\
\cline{3-4}\cline{5-6}\cline{7-8}\cline{9-10}
&&&& \lr{0} & \cell{-5} & \lr{-5} & \cell{16} & \lr{0} & \rr{-5} & \\
&&&& \lr{0} & \cell{-1} & \lr{-5} & \cell{0} & \lr{16} & \rr{-5} & \\
\cline{5-6}\cline{7-8}\cline{9-10}
\end{array}
\;\;
\end{pmatrix}.
\end{equation}

We associate with these matrices the four $2\times 2$ matrices that define the Toeplitz structure
\begin{align}\label{eq:422}
\widetilde{\mathbf{M}}_{n}^{(3,1)} &\to
\overbrace{\frac{1}{560}\begin{pmatrix}[r]
0 & 9
\\ 0 & 1
\end{pmatrix}}^{2}
\quad
\overbrace{\frac{1}{560}\begin{pmatrix}[r]
53 & 128
\\ 9 & 80
\end{pmatrix}}^{1}
\quad
\overbrace{\frac{1}{560}\begin{pmatrix}[r]
80 & 9
\\ 128 & 53
\end{pmatrix}}^{0}
\quad
\overbrace{\frac{1}{560}\begin{pmatrix}[r]
1 & 0
\\ 9 & 0
\end{pmatrix}}^{-1}
\\
\widetilde{\mathbf{B}}_{n}^{(3,1)} &\to
\overbrace{\frac{3}{40}\begin{pmatrix}[r]
0 &-5
\\ 0 &-1
\end{pmatrix}}^{2}
\quad
\overbrace{\frac{3}{40}\begin{pmatrix}[r]
-5 & 16
\\-5 & 0
\end{pmatrix}}^{1}
\quad
\overbrace{\frac{3}{40}\begin{pmatrix}[r]
0 &-5
\\ 16 &-5
\end{pmatrix}}^{0}
\quad
\overbrace{\frac{3}{40}\begin{pmatrix}[r]
-1 & 0
\\-5 & 0
\end{pmatrix}}^{-1}
\label{eq:423}
\end{align}
and similarly for
\begin{equation} \label{eq:new1}
    \widetilde{\mathbf{K}}_{n}^{(3,1)}(\rho):=-\widetilde{\mathbf{B}}_{n}^{(3,1)}+\rho\widetilde{\mathbf{M}}_{n}^{(3,1)}.
\end{equation}
In the notation of Theorem~\ref{cr:R}, we have $N=2$, $m=2$, and $k=1$.

\begin{proposition}\label{prop:43}
The polynomial $\SR_{\rho}^{(3,1)}$ is of type
\begin{align*}
&(3,0,2)\quad\text{if and only if}\quad\rho\in (168/17,10)\cup (42,+\nf),\\
&(2,2,1)\quad\text{if and only if}\quad\rho\in [0,168/17]\cup [10,42].
\end{align*}
\end{proposition}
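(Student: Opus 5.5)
The plan is to compute $\det\SR_{\rho}^{(3,1)}$ explicitly, factor out the trivial root, exploit a palindromic symmetry, and then locate the remaining roots relative to $\bT$ via the substitution $w=t+1/t$, in the spirit of Propositions~\ref{prop:51} and~\ref{prop:53}.

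\textbf{Step 1: form the symbol and its determinant.} From \eqref{eq:422}, \eqref{eq:423} and \eqref{eq:new1}, with $k=1$, I form
\[
\SR_{\rho}^{(3,1)}(t)=K_{-1}+K_{0}t+K_{1}t^{2}+K_{2}t^{3}, \qquad K_{j}=-\tfrac{3}{40}B_{j}+\tfrac{\rho}{560}M_{j},
\]
and compute $\det\SR_{\rho}^{(3,1)}(t)$. A priori this is a polynomial of degree at most $6$. However, $K_{2}$ has a zero first column and $K_{-1}$ has a zero second column. Hence:
\begin{itemize}
\item the coefficient of $t^{6}$ is $\det K_{2}=0$;
\item the constant term is $\det K_{-1}=0$.
\end{itemize}
I therefore expect $\det\SR_{\rho}^{(3,1)}(t)=c\,t\,P_{\rho}(t)$ with $P_{\rho}$ a quartic, whose coefficients will be explicit polynomials in $\rho$. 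The root $t=0$ always lies inside $\bT$. This is consistent with the totals $3+0+2=2+2+1=5$, and it reduces the claim to showing that $P_{\rho}$ has two roots inside and two outside $\bT$ on $(168/17,10)\cup(42,\nf)$, and one inside, two on, and one outside $\bT$ on $[0,168/17]\cup[10,42]$. One must also check that the leading coefficient of $P_{\rho}$ does not vanish for $\rho\ge 0$, so that no root escapes to infinity.

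\textbf{Step 2: use the palindromic structure.} Because of the symmetry of the B-spline matrices (reversal of time combined with swapping the two basis functions in a block), I expect $P_{\rho}$ to be palindromic: $P_{\rho}(t)=a t^{4}+b t^{3}+c t^{2}+b t+a$. I would verify this directly from the computed coefficients. Then
\[
P_{\rho}(t)=t^{2}\bigl(a w^{2}+b w+(c-2a)\bigr), \qquad w=t+1/t .
\]
Each root $w$ of the quadratic $Q_{\rho}(w)=aw^{2}+bw+c-2a$ produces a reciprocal pair $t,1/t$. The pair is located as follows:
\begin{itemize}
\item if $w\in[-2,2]$, the pair lies on $\bT$;
\item if $w$ is real with $|w|>2$, or $w$ is non-real, the pair consists of one root inside and one root outside $\bT$.
\end{itemize}
For the non-real case, note that $t+1/t$ maps $\bT$ onto $[-2,2]$, so a pair off that segment cannot lie on $\bT$.

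It follows that type $(3,0,2)$ means both $w$-roots avoid $[-2,2]$, while type $(2,2,1)$ means exactly one $w$-root lies in $[-2,2]$.

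\textbf{Step 3: locate the $w$-roots.} I first show that one $w$-root always lies outside $[-2,2]$ for $\rho\ge 0$. Then the transitions between the two types occur exactly when the other root crosses $\pm2$, that is, when $Q_{\rho}(2)=P_{\rho}(1)$ or $Q_{\rho}(-2)=P_{\rho}(-1)$ changes sign, or when the discriminant $b^{2}-4a(c-2a)$ vanishes.

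So I would factor $P_{\rho}(1)$, $P_{\rho}(-1)$ and the discriminant as polynomials in $\rho$. I expect the roots $0$, $168/17$, $10$, $42$ to appear among these factors, with any other factors having no roots in $[0,\nf)$. A sign table over the resulting intervals, combined with the position of the roots at one test value (say $\rho=0$, where $P_{0}$ comes from $\widetilde{\mathbf{B}}_{n}^{(3,1)}$ alone), then gives the stated classification. At the closed endpoints the $w$-root equals $\pm2$, which gives a double root $t=\pm1$ on $\bT$; these endpoints are therefore of type $(2,2,1)$, as claimed.

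\textbf{Main obstacle.} The hard part is Step 3: the sign bookkeeping. I must verify that the non-crossing $w$-root indeed stays outside $[-2,2]$ for all $\rho\ge0$ (I would check this through the sign of $Q_{\rho}$ at $\pm2$ together with the vertex $-b/(2a)$), and that complex $w$-roots occur only in the ranges where the type is $(3,0,2)$. If the palindromic guess fails, the fallback is the Schur–Cohn/Jury criterion applied directly to the quartic $P_{\rho}$.
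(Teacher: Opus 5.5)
Your proposal is correct and follows essentially the same route as the paper. The paper computes $\det\SR_{\rho}^{(3,1)}(t)=\frac{1}{11200}\,t\,(a_\rho t^4+b_\rho t^3+c_\rho t^2+b_\rho t+a_\rho)$, substitutes $y=t+t^{-1}$ to get $f_\rho(y)=a_\rho y^2+b_\rho y+(c_\rho-2a_\rho)$, and then uses $a_\rho<0$, $f_\rho(4)>0$ (so both $y$-roots are real and one exceeds $4$), $4a_\rho+b_\rho>0$ (vertex right of $2$), and $f_\rho(2)=-120\rho(\rho-42)$, $f_\rho(-2)=-24(\rho-10)(17\rho-168)$ to locate the smaller root; this settles exactly the checks you flagged as the main obstacle and shows complex $y$-roots never occur for $\rho\ge 0$.
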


\begin{proof}
We compute explicitly, using~\eqref{eq:422} and~\eqref{eq:423},
\[
\det \SR_{\rho}^{(3,1)}(t)=\frac{1}{11200}\, t \big(a_{\rho} t^{4}+b_{\rho} t^{3}+c_{\rho} t^{2}+b_{\rho} t+a_{\rho}\big).
\]
Here,
\[
a_{\rho}=-\rho^{2}-48\rho-1260,\quad
b_{\rho}=72\rho^{2}-768\rho+10080,\quad
c_{\rho}=-262\rho^{2}+6672\rho-17640.
\]
We analyze the roots of the reciprocal polynomial
\[
p_{\rho}(t)=a_{\rho} t^{4}+b_{\rho} t^{3}+c_{\rho} t^{2}+b_{\rho} t+a_{\rho}.
\]
We apply the substitution $y=t+t^{-1}$, which yields $t^{2}+t^{-2}=y^{2}-2$.
This reduces the quartic equation to the quadratic equation
\[
f_{\rho}(y)=a_{\rho} y^{2}+b_{\rho} y+(c_{\rho}-2a_{\rho})=0.
\]
Observe that if $y$ is real and $|y|>2$, then the equation $t^{2}-y t+1=0$ has two real roots, one strictly inside and one strictly outside the unit circle.

Conversely, if $y$ is real and $|y|\le 2$, the roots $t$ are complex conjugates lying on the unit circle.
Hence, there is a direct correspondence between pairs of roots of $\det \SR_{\rho}^{(3,1)}$ on the unit circle and roots of $f_{\rho}$ in the interval $[-2,2]$.

We now analyze $f_{\rho}(y)$.
First, note that the leading coefficient satisfies $a_{\rho}<0$ for all $\rho>0$. Moreover, $f_{\rho}(4)=12(\rho^{2}+244\rho+420)>0$ for all $\rho>0$, so that, since $a_{\rho}<0$, the parabola $f_{\rho}$ has two distinct real roots $y_{1}\le y_{2}$, with $y_{2}>4>2$.
Next, we show that $-b_{\rho}/(2a_{\rho})>2$ for all $\rho>0$, which is equivalent to
\[
4a_{\rho}+b_{\rho}=68\rho^{2}-960\rho+5040>0.
\]
The discriminant of this quadratic is $-449280<0$, so the inequality holds for all $\rho>0$.
Therefore, the critical point $-b_{\rho}/(2a_{\rho})$ of $f_{\rho}$ lies strictly to the right of $y=2$.

Since the parabola opens downward and its maximum is attained at $-b_{\rho}/(2a_{\rho})>2$, both roots $y_{1},y_{2}$ are strictly greater than $2$ if and only if $f_{\rho}(2)<0$.
We compute
\[
f_{\rho}(2)=-120\rho(\rho-42).
\]
Thus, for $\rho\in (42,+\nf)$, the polynomial $f_{\rho}$ has two real roots greater than $2$, and hence $\SR_{\rho}^{(3,1)}$ is of type $(3,0,2)$.

We now consider the case $\rho\le 42$.
In this range, $f_{\rho}(2)\ge 0$, so that $y_{1}\le 2\le y_{2}$.

To locate the root $y_{1}$, we evaluate at $y=-2$:
\begin{align*}
f_{\rho}(-2)=-24(\rho-10)(17\rho-168).
\end{align*}
For $\rho\in [0,168/17]\cup [10,42]$, we have $f_{\rho}(-2)\le 0$, and hence $y_{1}\in [-2,2]$.
Therefore, $\SR_{\rho}^{(3,1)}$ is of type $(2,2,1)$.

For $\rho\in (168/17,10)$, we have $f_{\rho}(-2)>0$.
Since also $f_{\rho}(2)>0$ and the parabola opens downward, both roots lie outside the interval $[-2,2]$.
Thus, $\SR_{\rho}^{(3,1)}$ is of type $(3,0,2)$.
\end{proof}

\begin{corollary} \label{cor:57}
If
\begin{equation*}
	\rho \in (168/17,10) \cup (42,+\infty),
\end{equation*}
then the condition numbers of $\widetilde{\mathbf{K}}_{n}^{(3,1)}(\rho)$ grow at least exponentially.
\end{corollary}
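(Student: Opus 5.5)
The plan is to derive the corollary directly from Proposition~\ref{prop:43} combined with Theorem~\ref{cr:R}. The matrices $\widetilde{\mathbf{K}}_{n}^{(3,1)}(\rho)$ are pure block Toeplitz band matrices. Their blocks are $2\times 2$, so $N=2$. They have one superdiagonal block, so $k=1$, and two subdiagonal blocks, so $m=2$. In the notation of Section~\ref{sec:connection} we can therefore write $\widetilde{\mathbf{K}}_{n}^{(3,1)}(\rho)=T_{n}(A_\rho)$ with $A_\rho(t)=t^{-1}\SR_{\rho}^{(3,1)}(t)$. Here $\SR_{\rho}^{(3,1)}(t)=M_{-1}+M_{0}t+M_{1}t^{2}+M_{2}t^{3}$, and the $M_j=-\widetilde{B}_j+\rho\widetilde{M}_j$ are read off from \eqref{eq:422}--\eqref{eq:423}. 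The first step is to record this identification carefully, checking that the block placed in position $j$ of \eqref{eq:422}--\eqref{eq:423} really sits on the $j$th block diagonal as in Figure~\ref{fg:3}.

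The second step invokes Proposition~\ref{prop:43}. For $\rho\in(168/17,10)\cup(42,+\nf)$ it tells us that $\SR_{\rho}^{(3,1)}$ is of type $(3,0,2)$, so $u=0$ and $s=3$. Since $kN=1\cdot 2=2\ne 3=s$, the hypothesis $s\ne kN$ of Theorem~\ref{cr:R} holds. As a sanity check on the counts: $\det\SR_{\rho}^{(3,1)}$ is the factor $t$ times a reciprocal quartic. The factor $t$ contributes the root $0$, which lies inside $\bT$. The quartic contributes two roots inside $\bT$ and two outside, because for each of the two real $y$-roots with $|y|>2$ one root $t$ falls on each side. This gives $s=1+2=3$ and $\ell=2$, consistent with the total degree $5$.

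Theorem~\ref{cr:R} then applies: $A_\rho\in GR_{2\times2}(\bT)$, and $\ind T(A_\rho)=kN-s=-1\ne0$. So $T(A_\rho)$ is not invertible, and Theorem~\ref{th:low} yields $\ka(\widetilde{\mathbf{K}}_{n}^{(3,1)}(\rho))\ge c\e^{\al n}$.

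The main obstacle is only bookkeeping. One has to confirm that the root at $t=0$ is a genuine zero of $\det\SR_{\rho}^{(3,1)}$ and is counted in $s$. It comes from the singular leading block $M_{-1}$, which has a zero second column. The other point to confirm is that the matrix size $2N_{\mesh}$ matches $nN$ with $n=N_{\mesh}$, so that the exponential bound in $n$ is equally an exponential bound in the matrix dimension.
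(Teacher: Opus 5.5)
Your proposal is correct and follows the paper's proof exactly: the paper also obtains the corollary by combining Proposition~\ref{prop:43}, which gives type $(3,0,2)$ on the stated $\rho$-range, with Theorem~\ref{cr:R} for $N=2$, $k=1$, so that $s=3\ne kN=2$. Your extra bookkeeping is accurate and only makes explicit what the paper leaves implicit: the simple zero at $t=0$ coming from the zero second column of $M_{-1}$, and $\ind T(A_\rho)=-1$.
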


\begin{proof}
This results from Theorem~\ref{cr:R} and Proposition \ref{prop:43}.
\end{proof}

\begin{figure}
    \centering
    \begin{subfigure}{0.48\linewidth}
        \centering
        \includegraphics[width=\linewidth]{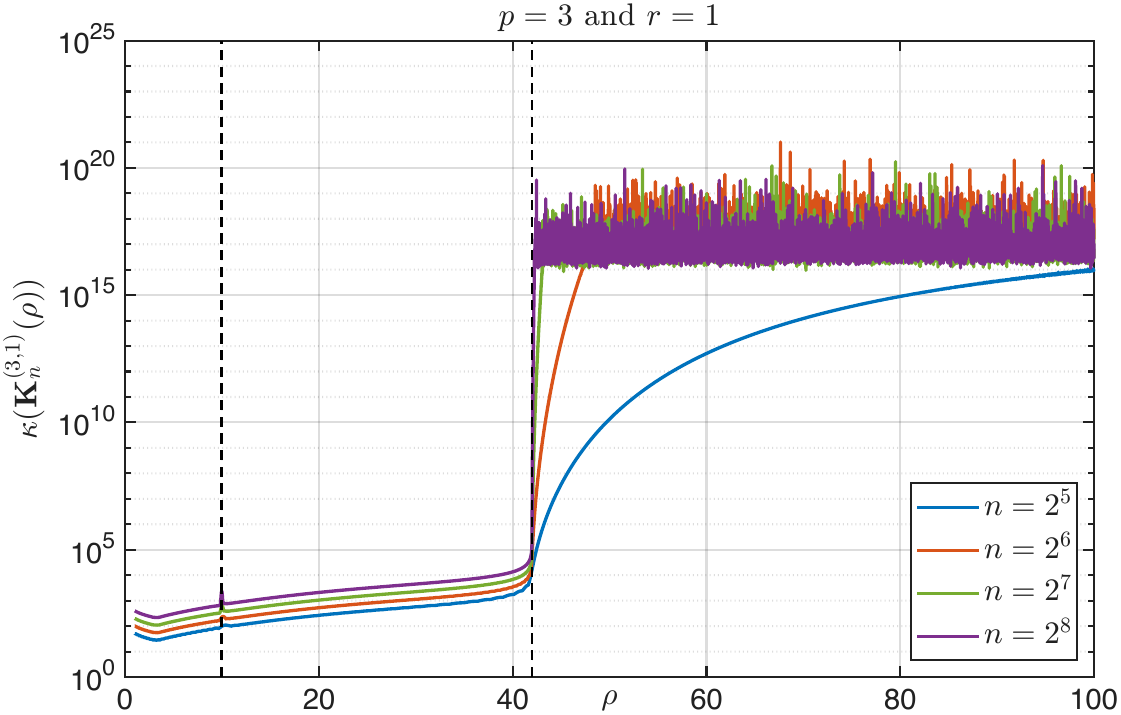}
    \end{subfigure}
    \hfill
    \begin{subfigure}{0.48\linewidth}
        \centering
        \includegraphics[width=\linewidth]{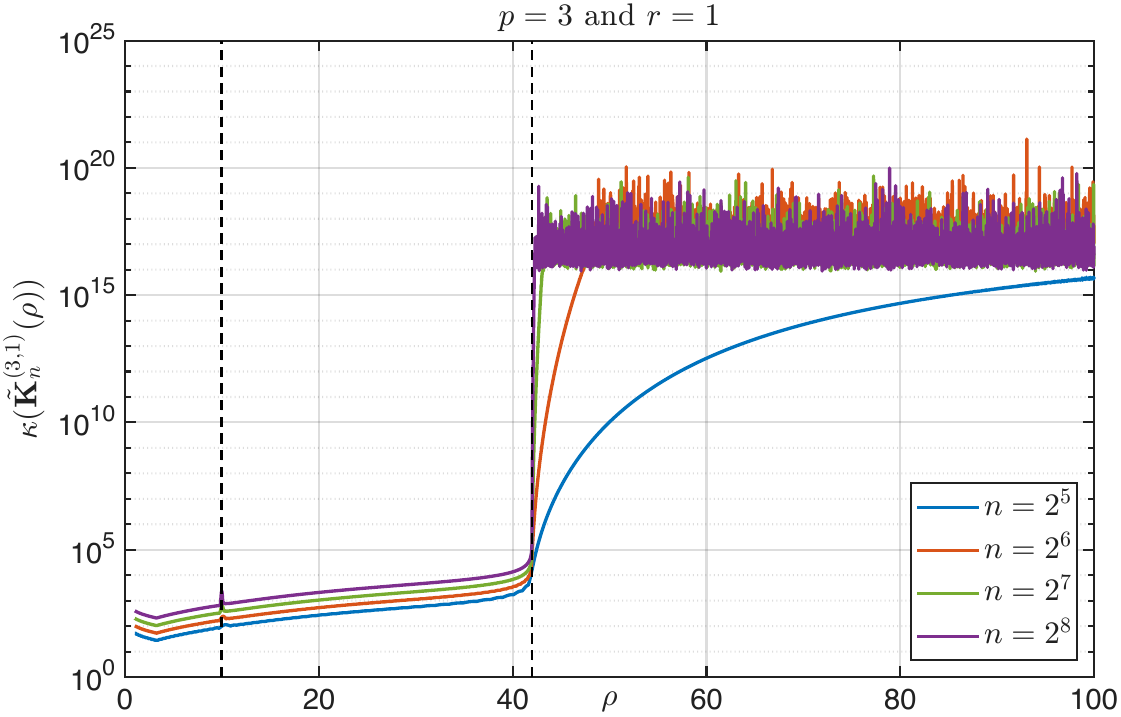}
    \end{subfigure}
    \caption{Condition numbers of the matrices $\mathbf{K}_n^{(3,1)}(\rho)$ (left), defined in~\eqref{eq:knp}, and $\widetilde{\mathbf{K}}_n^{(3,1)}(\rho)$ (right), defined in~\eqref{eq:new1}, as $\rho$ and~$n$ vary. Thresholds correspond to $\rho\in\{168/17,10,42\}$.}
    \label{fig:5}
\end{figure}

In Figure~\ref{fig:5} we report the numerically computed condition numbers of the matrices $\mathbf{K}_n^{(3,1)}(\rho)$ and $\widetilde{\mathbf{K}}_n^{(3,1)}(\rho)$ for
various values of $\rho$ and $n$. First, no differences are observed between the conditioning behaviors of the two families of matrices. Second, the numerical results
are in agreement with Corollary~\ref{cor:57} and suggest that the estimate therein is sharp: peaks of the condition number are observed close to $\rho\approx 10$ and for
$\rho>42$. Finally, for $\rho\in[0,168/17]\cup[10,42]$, where $\SR_{\rho}^{(3,1)}$ is of type $(2,2,1)$, the condition numbers appear to grow at most polynomially. This is the situation discussed in Remark~\ref{rm:221}, which is not covered by Theorems~\ref{th:G1} and~\ref{th:3.9}, and the numerical evidence is consistent with the at most polynomial growth expected there.

\subsection{Stabilized scheme}

To overcome the mesh condition, the bilinear form~\eqref{eq:48} is stabilized in~\cite{FrLo23,StZa19}.
Specifically, an unconditionally stable Petrov--Galerkin discretization is obtained by perturbing the bilinear form~\eqref{eq:48} with a suitable penalty term.
For maximal-regularity splines of degree $p\ge 1$ and regularity $C^{p-1}$, one considers the problem: find $u_{h}^{(p,p-1)}\in S_{h,0,\bullet}^{(p,p-1)}(0,\TF)$ such that
\[
a^{(p,p-1)}_{\mu,\de,h}(u_{h}^{(p,p-1)},v_{h}^{(p,p-1)})=(f, v_{h}^{(p,p-1)})_{L^{2}(0,\TF)},
\]
for all $v_{h}^{(p,p-1)}\in S_{h,\bullet,0}^{(p,p-1)}(0,\TF)$, where
\[
a^{(p,p-1)}_{\mu,\de,h}(u_{h}^{(p,p-1)},v_{h}^{(p,p-1)})
=a_{\mu}(u_{h}^{(p,p-1)},v_{h}^{(p,p-1)})
-\de\mu h^{2p} (\partial_{t}^{p} u_{h}^{(p,p-1)},\partial_{t}^{p} v_{h}^{(p,p-1)})_{L^{2}(0,\TF )},
\]
with $a_{\mu}$ as in~\eqref{eq:48} and a stabilization parameter $\de\ge 0$.

In~\cite{FeFr26}, it is shown that the method is stable for all $h$ and $\mu$ if
\begin{equation} \label{delta}
\de\ge\frac{(2^{2p+1}-1)}{2^{2p-1}\pi^{2(p+1)}}\ze(2(p+1)).
\end{equation}
We expect to stabilize problem~\eqref{eq:49} for arbitrary regularity $1\le r\le p-1$ using the bilinear form
\[
a_{\mu,\de,h}^{(p,r)}(u_{h}^{(p,r)},v_{h}^{(p,r)})
=a_{\mu}(u_{h}^{(p,r)},v_{h}^{(p,r)})
-\de\mu h^{2(r+1)} (\partial_{t}^{r+1} u_{h}^{(p,r)},\partial_{t}^{r+1} v_{h}^{(p,r)})_{L^{2}(0,\TF)},
\]
with $\de>0$ not too small.

It is important to note that the non-consistent term yields optimal convergence rates in the $L^{2}$-norm (expected to behave as $h^{p+1}$) only if
\begin{equation}\label{eq:424}
2(r+1)\ge p+1,\quad\text{that is,}\quad r\ge\lfloor (p-1)/2\rfloor.
\end{equation}

\begin{remark}
	When designing the perturbed formulation, attention must be paid to the balance between stability and consistency.
While the unstabilized problem is unstable unless a CFL condition is satisfied, the perturbed one should guarantee unconditional stability.
When $h\to 0$, the penalty term vanishes, ensuring that the discrete solution asymptotically approaches the solution of the continuous equation.
The condition~\eqref{eq:424} ensures that this ``consistency crime'' does not compromise the optimal accuracy of the scheme.
Although a variational argument establishing optimal convergence and stability is still missing, we aim to compute, via a matrix-based argument, the parameters for which the stabilized method is unconditionally stable.
\end{remark}

The system matrix associated with the stabilized problem, with respect to the basis in~\eqref{eq:41}, reads
\[
\mathbf{K}^{(p,r)}_{h,\mu}(\de)=-\mathbf{B}_{h}^{(p,r)}+\mu\mathbf{M}_{h}^{(p,r)}-\mu\de h^{2(r+1)}\mathbf{D}_{h}^{(p,r)},
\]
with $\mathbf{M}_{h}^{(p,r)}$, $\mathbf{B}_{h}^{(p,r)}$, and $\mathbf{D}_{h}^{(p,r)}$ as in~\eqref{eq:43}.
We define
\begin{equation} \label{eq:524b}
\mathbf{K}_{n}^{(p,r)}(\rho,\de)=h\mathbf{K}_{h,\mu}^{(p,r)}(\de)=-\mathbf{B}_{n}^{(p,r)}+\rho (\mathbf{M}_{n}^{(p,r)}-\de\mathbf{D}_{n}^{(p,r)}),
\end{equation}
whose entries depend on $\mu$ and $h$ only through $\rho=\mu h^{2}$.
Here, $\mathbf{M}_{n}^{(p,r)}$, $\mathbf{B}_{n}^{(p,r)}$, and $\mathbf{D}_{n}^{(p,r)}$ are defined in~\eqref{eq:44}.
We are interested in the conditioning behavior of the family of matrices $\{\mathbf{K}_{n}^{(p,r)}(\rho,\de)\}_{n}$ as $n$ increases.
In particular, we seek the smallest $\de$ such that the conditioning behavior is algebraic in $n$ for all $\rho> 0$.

\medskip
\noindent
\textbf{Case} $\boldsymbol{p=3,}$ $\boldsymbol{r=1.}$
This is the smallest case satisfying~\eqref{eq:424} that does not fit the framework of maximal regularity splines considered in~\cite{FeFr26}.

The matrices $\mathbf{M}_{n}^{(3,1)}$ and $\mathbf{B}_{n}^{(3,1)}$ are given in~\eqref{eq:418} and~\eqref{eq:419}, respectively.
The matrix $\mathbf{D}_{n}^{(3,1)}\in\bR^{(2N_{\mesh}+1)\times (2N_{\mesh}+1)}$ reads
\[
\mathbf{D}_{n}^{(3,1)}=\frac{3}{2}
\begin{pmatrix}
\newcommand{\cell}[1]{\makebox[1.4em][r]{$#1$}}
\newcommand{\lr}[1]{\multicolumn{1}{|r}{\cell{#1}}}
\newcommand{\rr}[1]{\multicolumn{1}{r|}{\cell{#1}}}
\;
\begin{array}{@{}*{11}{c}@{}}
\cline{1-2}\cline{3-4}
\lr{-12} & \cell{2} & \lr{2} & \rr{0} &&&&&&& \\
\lr{24} & \cell{-12} & \lr{0} & \rr{0} &&&&&&& \\
\cline{1-2}\cline{3-4}\cline{5-6}
\lr{-12} & \cell{16} & \lr{-8} & \rr{1} & \cell{1} & \rr{0} &&&&& \\
\lr{0} & \cell{-8} & \lr{16} & \rr{-11} & \cell{1} & \rr{0} &&&&& \\
\cline{1-2}\cline{3-4}\cline{5-6}\cline{7-8}
\lr{0} & \cell{1} & \lr{-11} & \rr{16} & \cell{-8} & \rr{1} & \cell{1} & \rr{0} &&& \\
\lr{0} & \cell{1} & \lr{1} & \rr{-8} & \cell{16} & \rr{-11} & \cell{1} & \rr{0} &&& \\
\cline{1-2}\cline{3-4}\cline{5-6}\cline{7-8}
\multicolumn{2}{c}{\ddots} & \multicolumn{2}{c}{\ddots} & \multicolumn{2}{c}{\ddots} & \multicolumn{2}{c}{\ddots} & \multicolumn{2}{c}{\ddots} & \\
\cline{3-4}\cline{5-6}\cline{7-8}\cline{9-10}
&& \lr{0} & \cell{1} & \lr{-11} & \cell{16} & \lr{-8} & \cell{1} & \lr{1} & \rr{0} & \\
&& \lr{0} & \cell{1} & \lr{1} & \cell{-8} & \lr{16} & \cell{-11} & \lr{1} & \rr{0} & \\
\cline{3-4}\cline{5-6}\cline{7-8}\cline{9-10}
&&&& \lr{0} & \cell{1} & \lr{-11} & \cell{16} & \lr{-8} & \rr{0} & \cell{2} \\
&&&& \lr{0} & \cell{1} & \lr{1} & \cell{-8} & \lr{16} & \rr{-12} & \cell{2} \\
\cline{5-6}\cline{7-8}\cline{9-10}
&&&&&& \cell{0} & \cell{0} & \cell{-12} & \cell{24} & \cell{-12} \\
\end{array}
\;\;
\end{pmatrix}.
\]

This matrix does not fit the framework of Theorem~\ref{cr:R} due to perturbations in the top-left and bottom-right corners, as well as an additional spurious row and column.
As already done for $\mathbf{M}_{n}^{(3,1)}$ and $\mathbf{B}_{n}^{(3,1)}$ in~\eqref{eq:420} and~\eqref{eq:421}, respectively, we consider the pure block Toeplitz band extension $\widetilde{\mathbf{D}}_{n}^{(3,1)}\in\bR^{2N_{\mesh}\times 2N_{\mesh}}$ given by
\[
\widetilde{\mathbf{D}}_{n}^{(3,1)}=\frac{3}{2}
\begin{pmatrix}
\newcommand{\cell}[1]{\makebox[1.4em][r]{$#1$}}
\newcommand{\lr}[1]{\multicolumn{1}{|r}{\cell{#1}}}
\newcommand{\rr}[1]{\multicolumn{1}{r|}{\cell{#1}}}
\;
\begin{array}{@{}*{11}{c}@{}}
\cline{1-2}\cline{3-4}
\lr{-8} & \cell{1} & \lr{1} & \rr{0} &&&&&&& \\
\lr{16} & \cell{-11} & \lr{1} & \rr{0} &&&&&&& \\
\cline{1-2}\cline{3-4}\cline{5-6}
\lr{-11} & \cell{16} & \lr{-8} & \rr{1} & \cell{1} & \rr{0} &&&&& \\
\lr{1} & \cell{-8} & \lr{16} & \rr{-11} & \cell{1} & \rr{0} &&&&& \\
\cline{1-2}\cline{3-4}\cline{5-6}\cline{7-8}
\lr{0} & \cell{1} & \lr{-11} & \rr{16} & \cell{-8} & \rr{1} & \cell{1} & \rr{0} &&& \\
\lr{0} & \cell{1} & \lr{1} & \rr{-8} & \cell{16} & \rr{-11} & \cell{1} & \rr{0} &&& \\
\cline{1-2}\cline{3-4}\cline{5-6}\cline{7-8}
\multicolumn{2}{c}{\ddots} & \multicolumn{2}{c}{\ddots} & \multicolumn{2}{c}{\ddots} & \multicolumn{2}{c}{\ddots} & \multicolumn{2}{c}{\ddots} & \\
\cline{3-4}\cline{5-6}\cline{7-8}\cline{9-10}
&& \lr{0} & \cell{1} & \lr{-11} & \rr{16} & \cell{-8} & \rr{1} & \cell{1} & \rr{0} & \\
&& \lr{0} & \cell{1} & \lr{1} & \rr{-8} & \cell{16} & \rr{-11} & \cell{1} & \rr{0} & \\
\cline{3-4}\cline{5-6}\cline{7-8}\cline{9-10}
&&&& \lr{0} & \cell{1} & \lr{-11} & \rr{16} & \cell{-8} & \rr{1} \\
&&&& \lr{0} & \cell{1} & \lr{1} & \rr{-8} & \cell{16} & \rr{-11} \\
\cline{5-6}\cline{7-8}\cline{9-10}
\end{array}
\;\;
\end{pmatrix}
\]

and the matrices
\[
\widetilde{\mathbf{K}}_{n}^{(3,1)}(\rho,\de)=-\widetilde{\mathbf{B}}_{n}^{(3,1)}+\rho (\widetilde{\mathbf{M}}_{n}^{(3,1)}-\de\widetilde{\mathbf{D}}_{n}^{(3,1)}).
\]

We also associate with $\widetilde{\mathbf{D}}_{n}^{(3,1)}$ the four $2\times 2$ matrices that define its Toeplitz structure
\begin{align}\label{eq:425}
\widetilde{\mathbf{D}}_{n}^{(3,1)} &\to
\overbrace{\frac{3}{2}\begin{pmatrix}[r]
0 & 1
\\ 0 & 1
\end{pmatrix}}^{2}
\quad
\overbrace{\frac{3}{2}\begin{pmatrix}[r]
-11 & 16
\\ 1 &-8
\end{pmatrix}}^{1}
\quad
\overbrace{\frac{3}{2}\begin{pmatrix}[r]
-8 & 1
\\ 16 &-11
\end{pmatrix}}^{0}
\quad
\overbrace{\frac{3}{2}\begin{pmatrix}[r]
1 & 0
\\ 1 & 0
\end{pmatrix}}^{-1}.
\end{align}

\begin{proposition} \label{prop:59}
Let $\de\ge 17/1680$.
Then, for all $\rho>0$, the polynomial $\SR_{\rho,\de}^{(3,1)}$ associated with $\widetilde{\mathbf{K}}_{n}^{(3,1)}(\rho,\delta)$ is of type $(2,2,1)$.
\end{proposition}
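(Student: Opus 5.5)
The approach follows the proof of Proposition~\ref{prop:43}. Using \eqref{eq:422}, \eqref{eq:423} and \eqref{eq:425}, I would form the four $2\times2$ coefficients of $\SR_{\rho,\de}^{(3,1)}(t)=M_{-1}+M_0t+M_1t^2+M_2t^3$ with $M_j=-\widetilde B_j+\rho(\widetilde M_j-\de\widetilde D_j)$. I would then compute $\det\SR_{\rho,\de}^{(3,1)}$ symbolically.

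The symmetry of the blocks already used in Proposition~\ref{prop:43} survives the perturbation, because $\widetilde D$ has the same block-palindromic pattern. I therefore expect
\[
\det\SR_{\rho,\de}^{(3,1)}(t)=c\,t\,\big(a t^4+bt^3+ct^2+bt+a\big),
\]
with $a,b,c$ polynomials in $\rho$ (quadratic) and linear in $\de$. Here the factor $t$ has its column origin in the first columns of $M_{-1}$, which vanish in all three matrices.

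The zero at $t=0$ contributes one zero inside $\bT$. It therefore suffices to show that the palindromic quartic $p_{\rho,\de}$ has exactly one pair of zeros on $\bT$ and one reciprocal real pair $t,1/t$ off $\bT$. This gives type $(1+1,2,1)=(2,2,1)$. With $y=t+t^{-1}$ this reduces to the quadratic
\[
f_{\rho,\de}(y)=a y^2+by+(c-2a),
\]
and as in Proposition~\ref{prop:43} the goal becomes: $f_{\rho,\de}$ has exactly one real root in $[-2,2]$ and one real root with $|y|>2$.

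The key step is to evaluate $f_{\rho,\de}(2)$ and $f_{\rho,\de}(-2)$, which equal (up to the constant) $p_{\rho,\de}(1)$ and $p_{\rho,\de}(-1)$. I would aim to show $f_{\rho,\de}(2)>0$ and $f_{\rho,\de}(-2)<0$ for all $\rho>0$, or the reverse signs. Then $f_{\rho,\de}(2)f_{\rho,\de}(-2)<0$ gives, by continuity, exactly one root in $(-2,2)$, and the other root is automatically real and lies outside $[-2,2]$ whenever $a\neq0$.

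For $\de=0$ these values are $-120\rho(\rho-42)$ and $-24(\rho-10)(17\rho-168)$; each changes sign, which is exactly the instability found earlier. With the $\de$ term, I expect each value to become $\rho$ times a quadratic in $\rho$ whose coefficients are affine in $\de$. The sign condition then becomes: the leading coefficient has the right sign and the discriminant is $\le 0$. I expect the binding constraint to be the one at $y=-2$ (the "$\rho\approx10$" instability), and its discriminant condition to reduce to exactly $\de\ge 17/1680$. I would verify that the $y=2$ condition is implied by, or weaker than, this one.

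The main obstacle is the boundary and degenerate cases. At $\de=17/1680$ one endpoint value may vanish at a single $\rho$, giving a double zero $t=-1$ on $\bT$. That is still two zeros of unit modulus, so type $(2,2,1)$, provided the other root $y$ satisfies $|y|>2$; I would check this directly at that $\rho$. I would also check where the leading coefficient $a=a(\rho,\de)$ vanishes, since there $f_{\rho,\de}$ degenerates to a linear function and the quartic drops degree. In that case I would argue directly that $p_{\rho,\de}$ keeps its count of zeros on $\bT$. Alternatively, I would show that $a<0$ persists for $\de\ge17/1680$, as it did for $\de=0$. That would make $f_{\rho,\de}$ a downward parabola and also handle the sign bookkeeping.
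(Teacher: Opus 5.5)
Your strategy is the paper's: factor $\det\SR_{\rho,\de}^{(3,1)}(t)=\frac{1}{11200}\,t\,p_{\rho,\de}(t)$ with $p_{\rho,\de}$ palindromic, pass to $f_{\rho,\de}$ via $y=t+t^{-1}$, and locate the roots from the signs of $f_{\rho,\de}(\pm2)=p_{\rho,\de}(\pm1)$. Your closing step is slightly leaner than the paper's. The step is: $a_{\rho,\de}\neq0$ together with $f_{\rho,\de}(2)f_{\rho,\de}(-2)<0$ forces exactly one root in $(-2,2)$ and one with $|y|>2$. This avoids the vertex bound $-b_{\rho,\de}/(2a_{\rho,\de})>2$ the paper uses. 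You only need $a_{\rho,\de}\neq0$, which holds because $a_{\rho,\de}<0$ for all $\rho>0$, $\de\ge0$.

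The step that would fail as you describe it is the derivation of the threshold. The endpoint values are
\[
f_{\rho,\de}(2)=120\rho\big((2520\de-1)\rho+42\big),\qquad
f_{\rho,\de}(-2)=-24\big((1680\de-17)\rho+168\big)\big((120\de-1)\rho+10\big).
\]
So $f_{\rho,\de}(-2)$ is not $\rho$ times a quadratic; its value at $\rho=0$ is $-40320$. It is a product of two real linear factors in $\rho$. Its discriminant as a quadratic in $\rho$ is therefore a perfect square, namely $24^2(3360\de+2)^2>0$ for every $\de\ge0$. Your criterion "right leading sign and discriminant $\le0$" is thus never satisfied and cannot yield $17/1680$. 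What is needed is sign-definiteness only on $\rho>0$. Both linear factors stay positive for all $\rho>0$ if and only if $1680\de-17\ge0$ and $120\de-1\ge0$, i.e.\ $\de\ge17/1680$. Separately, $f_{\rho,\de}(2)>0$ for all $\rho>0$ if and only if $\de\ge1/2520$, so your guess that the $y=-2$ condition is the binding one is correct. For $\de\ge17/1680$ both endpoint values are strictly signed for every $\rho>0$, including $\de=17/1680$ itself. The double zero at $t=-1$ you worry about therefore never occurs.

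Minor slips:
\begin{itemize}
\item The coefficients $a,b,c$ are quadratic, not linear, in $\de$ (terms $\de^2\rho^2$ appear).
\item The factor $t$ comes from the vanishing \emph{second} column of the $-1$ block in all three matrices, not the first.
\end{itemize}
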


\begin{proof}
We compute explicitly, using~\eqref{eq:422},~\eqref{eq:423}, and~\eqref{eq:425},
\[
\det \SR_{\rho,\de}^{(3,1)}(t)=\frac{1}{11200}\, t \big(a_{\rho,\de} t^{4}+b_{\rho,\de} t^{3}+c_{\rho,\de} t^{2}+b_{\rho,\de} t+a_{\rho,\de}\big),
\]
with coefficients
\begin{align*}
a_{\rho,\de} &=-302400\de^{2}\rho^{2}-720\de\rho^{2}+20160\de\rho-\rho^{2}-48\rho-1260,
\\
b_{\rho,\de} &=1209600\de^{2}\rho^{2}+53280\de\rho^{2}+221760\de\rho+72\rho^{2}-768\rho+10080,
\\
c_{\rho,\de} &=-1814400\de^{2}\rho^{2}+197280\de\rho^{2}-483840\de\rho-262\rho^{2}+6672\rho-17640.
\end{align*}

We analyze the roots of the polynomial
\[
p_{\rho,\de}(t)=a_{\rho,\de} t^{4}+b_{\rho,\de} t^{3}+c_{\rho,\de} t^{2}+b_{\rho,\de} t+a_{\rho,\de}.
\]

As in the proof of Proposition~\ref{prop:43}, we associate with $p_{\rho,\de}$ the polynomial
\[
f_{\rho,\de}(y)=a_{\rho,\de} y^{2}+b_{\rho,\de} y+(c_{\rho,\de}-2a_{\rho,\de}),
\]
via the transformation $y=t+t^{-1}$.

We now show that if $\de\ge 17/1680$, then $f_{\rho,\de}$ has exactly one real root in the interval $[-2,2]$ and the other outside.
This corresponds to one pair of complex conjugate roots of $p_{\rho,\de}$ on the unit circle and one pair with one root strictly inside and the other strictly outside.
Hence, $\SR_{\rho,\de}^{(3,1)}$ is of type $(2,2,1)$.

We note that for all $\rho>0$ and $\de\ge 0$ the following properties hold:
\[
\begin{aligned}
& a_{\rho,\de}<0,\quad-\frac{b_{\rho,\de}}{2a_{\rho,\de}}>2,\\
& f_{\rho,\de}(2)=120\rho^{2}(2520\de-1)+5040\rho,\\[1ex]
& f_{\rho,\de}(-2)=-24\big((1680\de-17)\rho+168\big)\big((120\de-1)\rho+10\big).
\end{aligned}
\]
These properties follow from direct computations.
Thus, the parabola $f_{\rho,\de}$ opens downward and its maximum is attained at $-b_{\rho,\de}/(2a_{\rho,\de})>2$.

To ensure exactly one real root in $[-2,2]$ for all $\rho>0$, we require $f_{\rho,\de}(2)\ge 0$ and $f_{\rho,\de}(-2)\le 0$.
This leads to the conditions
\begin{align*}
2520\de-1\ge 0,\quad 1680\de-17\ge 0,
\end{align*}
which are equivalent to $\de\ge 17/1680$.
\end{proof}

\begin{remark} The threshold $\de=17/1680$ in Proposition~\ref{prop:59} is sharp: for every $\de\in[0,17/1680)$ there exists $\rho>0$ for which
$\SR_{\rho,\de}^{(3,1)}$ is of type $(3,0,2)$ instead of $(2,2,1)$.
\end{remark}

\begin{figure}
    \centering
        \centering
        \includegraphics[width=0.7\linewidth]{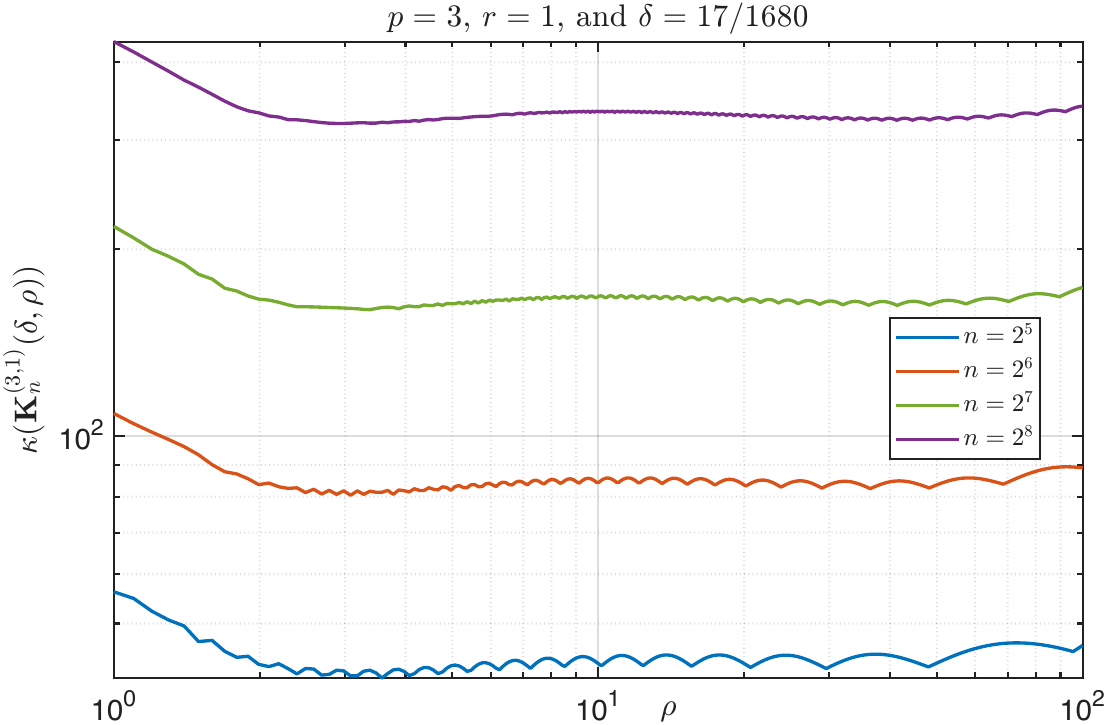}
    \caption{Condition numbers of the matrices $\mathbf{K}_n^{(3,1)}(\rho,\delta)$ defined in \eqref{eq:524b} by varying $\rho$ and~$n$ for $\delta=17/1680$.}
    \label{fig:6}
\end{figure}

Figure~\ref{fig:6} reports the numerically computed condition numbers of $\mathbf{K}_n^{(3,1)}(\rho,\delta)$ as $n$ and $\rho$ vary, for the smallest $\delta$ ($\delta=17/1680$) that guarantees type $(2,2,1)$ for all $\rho>0$. This is the regime discussed in Remark~\ref{rm:221}, which is not covered by Theorems~\ref{th:G1} and~\ref{th:3.9}. The numerically observed at most algebraic growth is consistent with the behavior expected there. Figure~\ref{fig:7} instead fixes $\rho$ and varies $\delta$: on the left, for small $\rho$, the condition numbers exhibit a peak at some $\delta\in(1/2520,17/1680)$; on the right, for large $\rho$, the value $\delta=17/1680$ is sharp, in the sense that it is the smallest $\delta$ for which the method is numerically stable. Both observations are consistent with the proof of Proposition~\ref{prop:59}. The same experiments performed on $\widetilde{\mathbf{K}}_n^{(3,1)}(\rho,\delta)$ produce indistinguishable results.

\begin{figure}
    \centering
    \begin{subfigure}{0.49\linewidth}
        \centering
        \includegraphics[width=\linewidth]{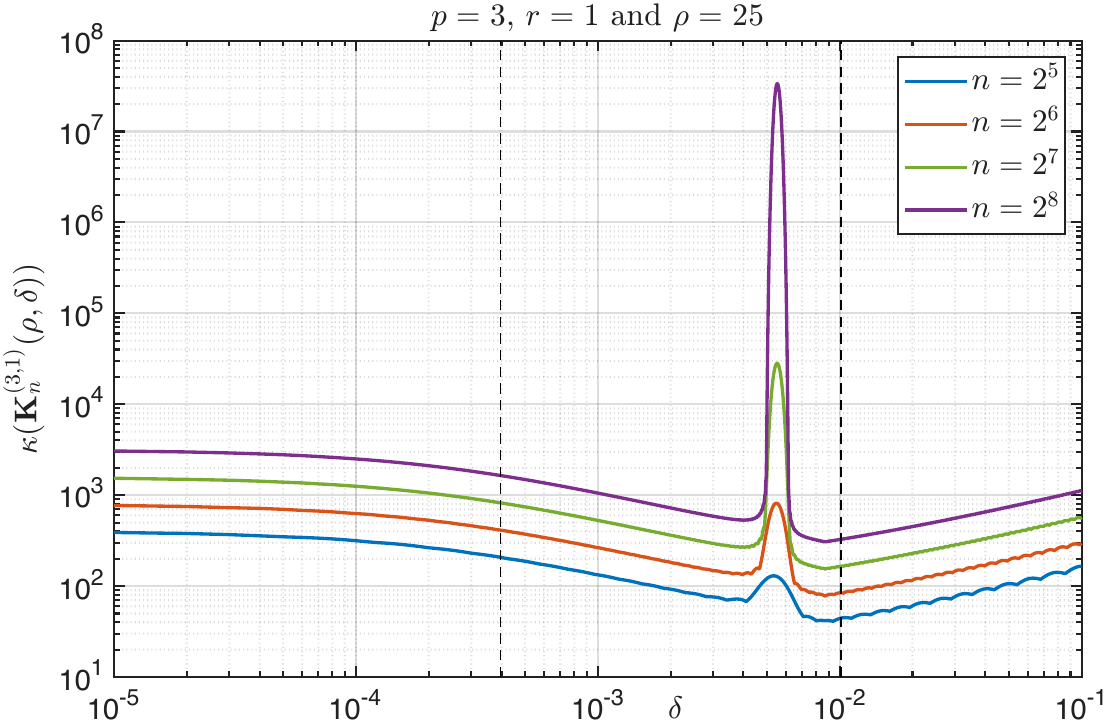}
    \end{subfigure}
    \hfill
    \begin{subfigure}{0.49\linewidth}
        \centering
        \includegraphics[width=\linewidth]{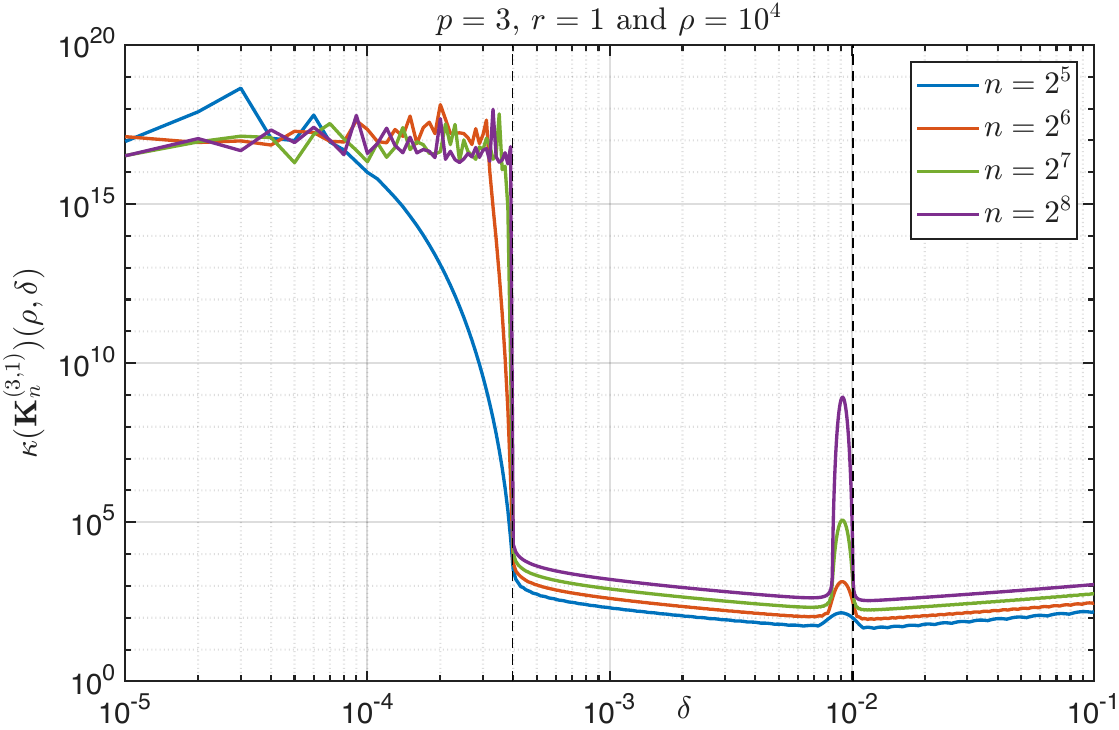}
    \end{subfigure}
    \caption{Condition numbers of the matrices $\mathbf{K}_n^{(3,1)}(\rho,\delta)$ defined in \eqref{eq:524b} by varying $n$ and~$\delta$ for $\rho=25$ (on the left) and $\rho=10^4$ (on the right). Thresholds correspond to $\delta \in \{1/2520, 17/1680\}$.}
    \label{fig:7}
\end{figure}

\subsection{Schrödinger equation} \label{sec:schro}

In the same setting as for the wave equation in~\eqref{eq:45}, we consider the linear Schrödinger equation, i.e., (\ref{eq:426}).

After multiplication by a test function and integration by parts in space, a space-time variational formulation of~\eqref{eq:426} reads as follows: find
\[
\psi\in L^{2}(0,\TF ;H^{1}_{0}(\Om))\cap H^{1}_{0,\bullet}(0,\TF ;[H_{0}^{1}(\Om)]')
\] such that
\begin{equation}\label{eq:427}
\mi\int_{0}^{\TF}\langle\partial_{t}\psi(\cdot,t),\Ph(\cdot,t)\rangle_{H^{1}_{0}(\Om)}\d t+(\nabla_{\bx}\psi,\nabla_{\bx}\Ph)_{L^{2}(Q_{\TF})}
=(F,\Ph)_{L^{2}(Q_{\TF})},
\end{equation}
for all $\Ph\in L^{2}(0,\TF;H^{1}_{0}(\Om))$, where $\langle\cdot,\cdot\rangle_{H^{1}_{0}(\Om)}$ denotes the duality pairing between $[H_{0}^{1}(\Om)]'$ and $H_{0}^{1}(\Om)$.

As in the case of the wave equation, by exploiting the Fourier expansion of the trial and test functions, the analysis of the unconditional stability of~\eqref{eq:427} reduces to studying the following parameter-dependent problem: given $\mu>0$, find $u\in H^{1}_{0,\bullet}(0,\TF)$ such that
\[
\mi (\partial_{t} u, w)_{L^{2}(0,\TF)}+\mu (u, w)_{L^{2}(0,\TF)}=(f, w)_{L^{2}(0,\TF)}\quad\text{for all}\quad w\in L^{2}(0,\TF),
\]
with $f\in L^{2}(0,\TF)$.
This problem is equivalent to determining $u\in H^{1}_{0,\bullet}(0,\TF)$ such that
\begin{equation}\label{eq:428}
b_{\mu}(u,v)=(f,\partial_{t} v)_{L^{2}(0,\TF)}\quad\text{for all}\quad v\in H^{1}_{\bullet,0}(0,\TF),
\end{equation}
where the bilinear form $b_{\mu}:H^{1}_{0,\bullet}(0,\TF)\times H^{1}_{\bullet,0}(0,\TF)\to\bC$ is given by
\[
b_{\mu}(u,v)=\mi (\partial_{t} u,\partial_{t} v)_{L^{2}(0,\TF)}-\mu (\partial_{t} u, v)_{L^{2}(0,\TF)}.
\]
The discrete counterpart of~\eqref{eq:428} amounts to looking for $u_{h}^{(p,r)}\in S_{h,0,\bullet}^{(p,r)}(0,\TF)$ such that
\[
\mi (\partial_{t} u_{h}^{(p,r)},\partial_{t} v_{h}^{(p,r)})_{L^{2}(0,\TF)}-\mu (\partial_{t} u_{h}^{(p,r)},v_{h}^{(p,r)})_{L^{2}(0,\TF)}=(f,\partial_{t} v_{h}^{(p,r)})_{L^{2}(0,\TF)},
\]
for all $v_{h}^{(p,r)}\in S_{h,\bullet,0}^{(p,r)}(0,\TF)$.

The system matrix associated with the bilinear form $b_{\mu}$, with respect to the basis introduced in~\eqref{eq:41}, reads
\[
\mathbf{S}_{h,\mu}^{(p,r)}=\mi\mathbf{B}_{h}^{(p,r)}-\mu\mathbf{C}_{h}^{(p,r)},
\]
with $\mathbf{B}_{h}^{(p,r)}$ and $\mathbf{C}_{h}^{(p,r)}$ as in~\eqref{eq:43}.
Let $\rho=\mu h$.
Then the entries of the scaled matrix
\begin{equation} \label{eq:Sn}
\mathbf{S}_{n}^{(p,r)}(\rho)=h\mathbf{S}_{h,\mu}^{(p,r)}=\mi\mathbf{B}_{n}^{(p,r)}-\rho\mathbf{C}_{n}^{(p,r)}
\end{equation}
depend on $\mu$ and $h$ only through $\rho$.
Here, $\mathbf{B}_{n}^{(p,r)}$ and $\mathbf{C}_{n}^{(p,r)}$ are defined in~\eqref{eq:44}.

We are interested in the conditioning behavior of the family of matrices $\{\mathbf{S}_{n}^{(p,r)}(\rho)\}_{n}$ as $n$ increases, by varying $\rho$.
It has been shown in~\cite{FeGo25} that for all $p\ge 1$ and $r=p-1$, the condition numbers of $\mathbf{S}_{n}^{(p,r)}(\rho)$ grow at most algebraically in $n$ for all $\rho>0$.
Here, we study the behavior for the cases $(p,r)=(2,0)$, $(p,r)=(3,0)$, and $(p,r)=(3,1)$.

\medskip
\noindent
\textbf{Case} $\boldsymbol{p=2,}$ $\boldsymbol{r=0.}$
The matrix $\mathbf{B}_{n}^{(2,0)}$ is reported in~\eqref{eq:411}, while the matrix $\mathbf{C}_{n}^{(2,0)}\in\bR^{2N_{\mesh}\times 2N_{\mesh}}$ reads

\[
\mathbf{C}_{n}^{(2,0)}=\frac{1}{6}
\begin{pmatrix}
\newcommand{\cell}[1]{\makebox[1em][r]{$#1$}}
\newcommand{\lr}[1]{\multicolumn{1}{|r}{\cell{#1}}}
\newcommand{\rr}[1]{\multicolumn{1}{r|}{\cell{#1}}}
\;
\begin{array}{@{}*{10}{c}@{}}
\cline{1-2}
\lr{2} & \rr{1} &&&&&&&& \\
\lr{0} & \rr{2} &&&&&&&& \\
\cline{1-2}\cline{3-4}
\lr{-2} & \rr{0} & \cell{2} & \rr{1} &&&&&& \\
\lr{0} & \rr{-2} & \cell{0} & \rr{2} &&&&&& \\
\cline{1-2}\cline{3-4}\cline{5-6}
\lr{0} & \rr{-1} & \cell{-2} & \rr{0} & \cell{2} & \rr{1} &&&& \\
\lr{0} & \rr{0} & \cell{0} & \rr{-2} & \cell{0} & \rr{2} &&&& \\
\cline{1-2}\cline{3-4}\cline{5-6}
\multicolumn{2}{c}{\ddots} & \multicolumn{2}{c}{\ddots} & \multicolumn{2}{c}{\ddots} & \multicolumn{2}{c}{\ddots} \\
\cline{3-4}\cline{5-6}\cline{7-8}
&& \lr{0} & \cell{-1} & \lr{-2} & \cell{0} & \lr{2} & \rr{1} && \\
&& \lr{0} & \cell{0} & \lr{0} & \cell{-2} & \lr{0} & \rr{2} && \\
\cline{3-4}\cline{5-6}\cline{7-8}\cline{9-10}
&&&& \lr{0} & \cell{-1} & \lr{-2} & \rr{0} & \cell{2} & \rr{1}\\
&&&& \lr{0} & \cell{0} & \lr{0} & \rr{-2} & \cell{0} & \rr{2}\\
\cline{5-6}\cline{7-8}\cline{9-10}
\end{array}
\;\;
\end{pmatrix}.
\]

We associate with this matrix the three $2\times 2$ matrices that define the Toeplitz structure:
\begin{equation}\label{eq:429}
\mathbf{C}_{n}^{(2,0)}\to
\overbrace{\frac{1}{6}\begin{pmatrix}[r]
0 &-1 \\
0 & 0
\end{pmatrix}}^{2}
\quad
\overbrace{\frac{1}{6}\begin{pmatrix}[r]
-2 & 0 \\
0 &-2
\end{pmatrix}}^{1}
\quad
\overbrace{\frac{1}{6}\begin{pmatrix}[r]
2 & 1 \\
0 & 2
\end{pmatrix}}^{0}.
\end{equation}

Similarly, we define $\mathbf{S}_{n}^{(2,0)}(\rho)=\mi\mathbf{B}_{n}^{(2,0)}-\rho\mathbf{C}_{n}^{(2,0)}$.

\begin{proposition}\label{P56}
The polynomial $\SR_{\rho}^{(2,0)}$ associated with $\mathbf{S}_{n}^{(2,0)}(\rho)$ is of type $(0,2,0)$ for all $\rho>0$.
\end{proposition}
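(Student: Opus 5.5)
The plan is to compute $\det\SR_{\rho}^{(2,0)}(t)$ explicitly, in the same way as in the proofs of Propositions~\ref{prop:51} and~\ref{prop:53}, and then to locate its two zeros. From~\eqref{eq:413} and~\eqref{eq:429}, the matrix polynomial associated with $\mathbf{S}_{n}^{(2,0)}(\rho)=\mi\mathbf{B}_{n}^{(2,0)}-\rho\mathbf{C}_{n}^{(2,0)}$ is $\SR_{\rho}^{(2,0)}(t)=M_{0}+M_{1}t+M_{2}t^{2}$ with $M_{j}=\mi B_{j}-\rho C_{j}$. Here $N=2$, $m=2$, $k=0$. To keep the algebra short, abbreviate $b:=2\mi/3$ and $c:=\rho/6$. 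The entries of $\SR_{\rho}^{(2,0)}(t)$ are then:
\begin{itemize}
\item both diagonal entries equal $-b(1+t)-2c(1-t)$;
\item the $(1,2)$ entry is $b(-1+4t-t^{2})-c(1-t^{2})$;
\item the $(2,1)$ entry is $2b$.
\end{itemize}

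Expanding the $2\times 2$ determinant, I expect the terms to regroup as
\[
\det\SR_{\rho}^{(2,0)}(t)=3b^{2}(1-t)^{2}+6bc(1-t^{2})+4c^{2}(1-t)^{2}
=(1-t)\big[\al(1-t)+\be(1+t)\big],
\]
where $\al:=3b^{2}+4c^{2}=\rho^{2}/9-4/3\in\bR$ and $\be:=6bc=2\mi\rho/3\in\mi\bR\sm\{0\}$. This factorization is the key step, and I would double-check it with care. In particular, the appearance of the factor $(1-t)$ means that $t=1$ is always a zero on $\bT$ (for every $\rho$, not only $\rho>0$).

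The coefficient of $t^{2}$ is $\al-\be$. Since $\be$ is purely imaginary and nonzero for $\rho>0$, this coefficient is nonzero, so $\det\SR_{\rho}^{(2,0)}$ has exactly two zeros. One is $t=1$. The other solves $\al(1-t)+\be(1+t)=0$, namely
\[
t=\frac{\al+\be}{\al-\be}.
\]
Because $\al$ is real and $\be$ is purely imaginary, $|\al+\be|=|\al-\be|=\sqrt{\al^{2}+|\be|^{2}}>0$. Hence this second zero also has modulus one.

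Both zeros therefore lie on $\bT$, and none lies inside or outside. Counting with multiplicity, this gives type $(0,2,0)$ for all $\rho>0$. The only real obstacle is getting the determinant expansion right. The conceptual point that makes the argument work is the real-versus-imaginary split between $\al$ and $\be$, which forces the Möbius-type expression for the second zero to be unimodular.
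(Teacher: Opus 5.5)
Your proposal is correct and follows the same route as the paper: you compute $\det\SR_{\rho}^{(2,0)}$ explicitly and show that both zeros lie on $\bT$. Your expansion checks out and matches the paper's $\frac{1}{9}(\cdots)$ via $9(\al+\be)=\rho^{2}+6\mi\rho-12=a_\rho$. The only difference is that you factor out $(1-t)$ and read off the second zero $(\al+\be)/(\al-\be)$ directly, whereas the paper uses the discriminant $b_\rho^{2}-4|a_\rho|^{2}=-144\rho^{2}$; incidentally, your version correctly puts $a_\rho/9=\det M_{0}$ in the constant term, while the paper writes $a_\rho$ and $\ol{a}_\rho$ the other way around, which only conjugates the roots and does not affect the conclusion.
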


\begin{proof}
Using~\eqref{eq:413} and~\eqref{eq:429}, we compute
\[
\det \SR_{\rho}^{(2,0)}(t)=\frac{1}{9}\big(a_{\rho} t^{2}+b_{\rho} t+\ol{a}_{\rho}\big),
\]
with coefficients $a_{\rho}=\rho^{2}+6\mi\rho-12$, $b_{\rho}=-2\rho^{2}+24$.
We compute the discriminant
\[
b_{\rho}^{2}-4|a_{\rho}|^{2}=-144\rho^{2},
\]
which is real and negative for all $\rho>0$.

The squared modulus of the roots is then
\[
\frac{\Big(-b_{\rho}\pm\mi\sqrt{-(b_{\rho}^{2}-4|a_{\rho}|^{2})}\Big)\Big(-b_{\rho}\mp\mi\sqrt{-(b_{\rho}^{2}-4|a_{\rho}|^{2})}\Big)}{4|a_{\rho}|^{2}}
=\frac{b_{\rho}^{2}+(-(b_{\rho}^{2}-4|a_{\rho}|^{2}))}{4|a_{\rho}|^{2}}=1.
\]
Thus, $\SR_{\rho}^{(2,0)}$ is of type $(0,2,0)$.
\end{proof}

\medskip
\noindent
\textbf{Case} $\boldsymbol{p=3,}$ $\boldsymbol{r=0.}$
The matrix $\mathbf{B}_{n}^{(3,0)}$ is shown in~\eqref{eq:415}, while the matrix $\mathbf{C}_{n}^{(3,0)}\in\bR^{3N_{\mesh}\times 3N_{\mesh}}$ reads
{\small\[
\mathbf{C}_{n}^{(3,0)}=\frac{1}{20}
\begin{pmatrix}
\newcommand{\cell}[1]{\makebox[0.9em][r]{$#1$}}
\newcommand{\lr}[1]{\multicolumn{1}{|r}{\cell{#1}}}
\newcommand{\rr}[1]{\multicolumn{1}{r|}{\cell{#1}}}
\;
\begin{array}{@{}*{15}{c}@{}}
\cline{1-3}
\lr{6} & \cell{3} & \rr{1} &&&&&&&&&&&& \\
\lr{0} & \cell{3} & \rr{3} &&&&&&&&&&&& \\
\lr{-3} & \cell{0} & \rr{6} &&&&&&&&&&&& \\
\cline{1-3}\cline{4-6}
\lr{-3} & \cell{-6} & \rr{0} & \cell{6} & \cell{3} & \rr{1} &&&&&&&&& \\
\lr{0} & \cell{0} & \rr{-6} & \cell{0} & \cell{3} & \rr{3} &&&&&&&&& \\
\lr{0} & \cell{0} & \rr{-3} & \cell{-3} & \cell{0} & \rr{6} &&&&&&&&& \\
\cline{1-3}\cline{4-6}\cline{7-9}
\lr{0} & \cell{0} & \rr{-1} & \cell{-3} & \cell{-6} & \rr{0} & \cell{6} & \cell{3} & \rr{1} &&&&&& \\
\lr{0} & \cell{0} & \rr{0} & \cell{0} & \cell{0} & \rr{-6} & \cell{0} & \cell{3} & \rr{3} &&&&&& \\
\lr{0} & \cell{0} & \rr{0} & \cell{0} & \cell{0} & \rr{-3} & \cell{-3} & \cell{0} & \rr{6} &&&&&& \\
\cline{1-3}\cline{4-6}\cline{7-9}
\multicolumn{3}{c}{\ddots} & \multicolumn{3}{c}{\ddots} & \multicolumn{3}{c}{\ddots} & \multicolumn{3}{c}{\ddots} & \\
\cline{4-6}\cline{7-9}\cline{10-12}
&&& \lr{0} & \cell{0} & \cell{-1} & \lr{-3} & \cell{-6} & \rr{0} & \cell{6} & \cell{3} & \rr{1} &&& \\
&&& \lr{0} & \cell{0} & \cell{0} & \lr{0} & \cell{0} & \rr{-6} & \cell{0} & \cell{3} & \rr{3} &&& \\
&&& \lr{0} & \cell{0} & \cell{0} & \lr{0} & \cell{0} & \rr{-3} & \cell{-3} & \cell{0} & \rr{6} &&& \\
\cline{4-6}\cline{7-9}\cline{10-12}\cline{13-15}
&&&&&& \lr{0} & \cell{0} & \rr{-1} & \cell{-3} & \cell{-6} & \rr{0} & \cell{6} & \cell{3} & \rr{1}\\
&&&&&& \lr{0} & \cell{0} & \rr{0} & \cell{0} & \cell{0} & \rr{-6} & \cell{0} & \cell{3} & \rr{3}\\
&&&&&& \lr{0} & \cell{0} & \rr{0} & \cell{0} & \cell{0} & \rr{-3} & \cell{-3} & \cell{0} & \rr{6}\\
\cline{7-9}\cline{10-12}\cline{13-15}
\end{array}
\;\;
\end{pmatrix}.
\]}

We associate with this matrix the three $3\times 3$ matrices that define the Toeplitz structure:
\[
\mathbf{C}_{n}^{(3,0)}\to
\overbrace{\frac{1}{20}\begin{pmatrix}[r]
0 & 0 &-1 \\
0 & 0 & 0 \\
0 & 0 & 0
\end{pmatrix}}^{2}
\quad
\overbrace{\frac{1}{20}\begin{pmatrix}[r]
-3 &-6 & 0 \\
0 & 0 &-6 \\
0 & 0 &-3
\end{pmatrix}}^{1}
\quad
\overbrace{\frac{1}{20}\begin{pmatrix}[r]
6 & 3 & 1 \\
0 & 3 & 3 \\
-3 & 0 & 6
\end{pmatrix}}^{0}.
\]
Similarly, we define $\mathbf{S}_{n}^{(3,0)}(\rho)=\mi\mathbf{B}_{n}^{(3,0)}-\rho\mathbf{C}_{n}^{(3,0)}$.

\begin{proposition}\label{P57}
The polynomial $\SR_{\rho}^{(3,0)}$ associated with $\mathbf{S}_{n}^{(3,0)}(\rho)$ is, for all $\rho>0$, of type $(0,2,0)$.
\end{proposition}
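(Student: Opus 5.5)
The plan is to follow the proof of Proposition~\ref{P56}: compute $\det\SR_{\rho}^{(3,0)}$ explicitly and check that it is a self-inversive quadratic whose two roots lie on~$\bT$.

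\textbf{Step 1: the determinant.} With $N=3$, $m=2$, $k=0$ we have
\[
\SR_{\rho}^{(3,0)}(t)=M_{0}+M_{1}t+M_{2}t^{2},\qquad M_{j}=\mi B_{j}-\rho C_{j}.
\]
Here $B_{j}$ are the blocks in~\eqref{eq:417} and $C_{j}$ the blocks of $\mathbf{C}_{n}^{(3,0)}$ listed above. A priori $\det\SR_{\rho}^{(3,0)}$ has degree up to $6$. However, $M_{2}$ has rank one, and $M_{1}$ has zero entries in positions $(2,1),(2,2),(3,1),(3,2)$. Expanding the determinant along these structures should cut the degree down to $2$, exactly as happened for the wave matrices in Proposition~\ref{prop:53}.

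I expect the result to have the form
\[
\det\SR_{\rho}^{(3,0)}(t)=c\,\big(a_{\rho}t^{2}+b_{\rho}t+\ol{a}_{\rho}\big),
\]
with $c\ne0$ a constant (possibly after extracting a unimodular factor), $a_{\rho}$ a complex polynomial in $\rho$ of degree $3$, and $b_{\rho}$ real.

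The reason to expect the self-inversive structure is this. The blocks $B_{j}$ come from a symmetric bilinear form, and the $C_{j}$ from a form that is skew up to boundary terms. Hence the symbol should satisfy
\[
\ol{\SR_{\rho}^{(3,0)}(1/\ol{t})}=\pm\,t^{-2}J\,\SR_{\rho}^{(3,0)}(t)^{\top}J
\]
for a suitable flip matrix $J$. This symmetry makes $\ol{\det\SR(1/\ol t)}$ proportional to $t^{-2}\det\SR(t)$, which is precisely the self-inversive property. I would record this symmetry to justify the shape of the polynomial, and then confirm the explicit coefficients by direct symbolic computation.

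\textbf{Step 2: roots on the unit circle.} I also need $a_{\rho}\ne0$ for every $\rho>0$; this should follow from $\Im a_{\rho}$ being a nonzero multiple of $\rho$ times a positive polynomial. Then the product of the two roots is $\ol{a}_{\rho}/a_{\rho}$, which has modulus $1$. If moreover $b_{\rho}^{2}-4|a_{\rho}|^{2}\le0$, the roots are
\[
t=\frac{-b_{\rho}\pm\mi\sqrt{4|a_{\rho}|^{2}-b_{\rho}^{2}}}{2a_{\rho}},
\]
and the computation in the proof of Proposition~\ref{P56} gives $|t|^{2}=1$ for both. So $\det\SR_{\rho}^{(3,0)}$ has exactly two zeros, both unimodular, and none elsewhere. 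This is type $(0,2,0)$.

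\textbf{Main obstacle: the sign of the discriminant.} The key verification is that
\[
b_{\rho}^{2}-4|a_{\rho}|^{2}\le0\quad\text{for all }\rho>0,
\]
which is a degree-$6$ polynomial inequality in $\rho$. I anticipate a clean factorization, in analogy with $-144\rho^{2}$ in Proposition~\ref{P56}. My guess is something like $-\kappa\rho^{2}q(\rho)$ with $q$ a sum of squares, or $q$ having no positive roots.

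If no factorization is visible, I would write $|a_{\rho}|^{2}=(\Re a_{\rho})^{2}+(\Im a_{\rho})^{2}$. One route is to check $|b_{\rho}|\le 2|\Re a_{\rho}|$ directly. Another is to show that $4(\Im a_{\rho})^{2}$ dominates $b_{\rho}^{2}-4(\Re a_{\rho})^{2}$, which reduces to checking that a real polynomial in $\rho$ has nonnegative coefficients.
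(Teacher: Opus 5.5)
Your plan is the paper's proof, and your predictions match it. The paper obtains $\det\SR_{\rho}^{(3,0)}(t)=-\frac{9}{800}\big(a_{\rho}t^{2}+b_{\rho}t-\ol{a}_{\rho}\big)$ with $a_{\rho}=\rho^{3}+12\mi\rho^{2}-60\rho-120\mi$ and $b_{\rho}=-24\mi(\rho^{2}-10)$ purely imaginary; after multiplying by $\mi$ this is your self-inversive form with real middle coefficient. The paper's identity $b_{\rho}^{2}+4|a_{\rho}|^{2}=4\rho^{2}(\rho^{2}-60)^{2}$ becomes, in your normalization, exactly your guess $b^{2}-4|a|^{2}=-4\rho^{2}(\rho^{2}-60)^{2}\le 0$, with a double unimodular root at $\rho=\sqrt{60}$. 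Your zero-pattern argument that the degree drops to $2$ is also correct.

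Two side justifications are wrong, although neither is needed once the determinant is computed explicitly.

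\emph{The symmetry claim.} No constant invertible $J$ can give $\ol{\SR(1/\ol{t})}=\pm t^{-2}J\SR(t)^{\top}J$. Comparing $t^{0}$ coefficients would force $\ol{M_{0}}=\pm JM_{2}^{\top}J$, which is impossible: $M_{2}$ has rank one, while $|\det M_{0}|=\frac{9}{800}|a_{\rho}|\neq 0$. The correct symmetry accounts for the one-index shift between test and trial functions. Let $Z(t)=\left(\begin{smallmatrix}0&0&t\\1&0&0\\0&1&0\end{smallmatrix}\right)$ be the block symbol of the scalar shift, so $\det Z(t)=t$. Then $\Sigma(t)=Z(t)^{-1}\SR(t)$ is the symbol of the unshifted interior matrix, whose $\mathbf{B}$-part is symmetric and whose $\mathbf{C}$-part is skew-symmetric. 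Hence $\Sigma$ is skew-Hermitian on $\bT$, which gives $\SR(t)^{\ast}=-Z(t)^{-1}\SR(t)Z(t)^{-1}$ there and therefore $\ol{\det\SR(1/\ol{t})}=-t^{-2}\det\SR(t)$.

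\emph{The claim $a_{\rho}\neq 0$.} In your normalization $\Im a_{\rho}$ is a real multiple of $\rho(\rho^{2}-60)$, which vanishes at $\rho=\sqrt{60}$, so it is not $\rho$ times a positive polynomial. Instead, $a_{\rho}\neq 0$ follows from $|a_{\rho}|^{2}=\rho^{2}(\rho^{2}-60)^{2}+144(\rho^{2}-10)^{2}>0$, since the two terms never vanish together. The paper leaves this check implicit.
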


\begin{proof}
Computing the determinant gives
\[
\det \SR_{\rho}^{(3,0)}(t)=-\frac{9}{800}\big(a_{\rho} t^{2}+b_{\rho} t-\ol{a}_{\rho}\big),
\]
with $a_{\rho}=\rho^{3}+12\mi\rho^{2}-60\rho-120\mi$, $b_{\rho}=-24\mi\rho^{2}+240\mi$.
We evaluate the discriminant:
\begin{align*}
b_{\rho}^{2}+4|a_{\rho}|^{2}=4\rho^{2}(\rho^{2}-60)^{2},
\end{align*}
which is positive for all $\rho>0$.
The squared modulus of the roots is
\[
\frac{\Big(-b_{\rho}\pm\sqrt{b_{\rho}^{2}+4|a_{\rho}|^{2}}\Big)\Big(-b_{\rho}\mp\sqrt{b_{\rho}^{2}+4|a_{\rho}|^{2}}\Big)}{4|a_{\rho}|^{2}}
=\frac{-b_{\rho}^{2}+(b_{\rho}^{2}+4|a_{\rho}|^{2})}{4|a_{\rho}|^{2}}=1.
\]
Thus, $\SR_{\rho}^{(3,0)}$ is of type $(0,2,0)$ for all $\rho>0$.
\end{proof}

\begin{corollary} \label{cor:514}
The condition numbers of $\mathbf{S}_{n}^{(2,0)}(\rho)$ and $\mathbf{S}_{n}^{(3,0)}(\rho)$ grow at most polynomially for every $\rho>0$.
\end{corollary}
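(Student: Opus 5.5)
The plan is to deduce the corollary directly from Propositions~\ref{P56} and~\ref{P57} together with Theorem~\ref{th:types}, in exactly the way Corollaries~\ref{cor:52} and~\ref{cor:54} were obtained. Both matrix families $\mathbf{S}_{n}^{(2,0)}(\rho)$ and $\mathbf{S}_{n}^{(3,0)}(\rho)$ are pure block Toeplitz matrices. Unlike the case $r=1$, no corner corrections occur, so the sequences are exactly of the form $T_{n}(\SR_{\rho})$. Their blocks occupy only the main diagonal and the first two subdiagonals, as read off from \eqref{eq:413}, \eqref{eq:429} and the analogous $3\times3$ data. In the notation of \eqref{eq:R} this gives $k=0$, so $A=\SR_{\rho}$ is a matrix polynomial $A_{0}+A_{1}t+A_{2}t^{2}$.

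The first step is to verify the hypothesis $\det A_{0}\ne0$ of Theorem~\ref{th:mapol}, which underlies the second item of Theorem~\ref{th:types}. Since $\det\SR_{\rho}(0)=\det A_{0}$, it suffices to evaluate the explicit determinants at $t=0$.
\begin{itemize}
\item For $(p,r)=(2,0)$ this value is $\ol{a}_{\rho}/9$, and $\ol{a}_{\rho}=\rho^{2}-12-6\mi\rho\ne0$ for $\rho>0$, because its imaginary part is nonzero.
\item For $(p,r)=(3,0)$ it is $\tfrac{9}{800}\ol{a}_{\rho}$, where $a_{\rho}=\rho(\rho^{2}-60)+12\mi(\rho^{2}-10)$. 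Its real and imaginary parts vanish simultaneously only if $\rho^{2}=60$ and $\rho^{2}=10$, which is impossible.
\end{itemize}
Hence the matrices are invertible for every $n$. One can also see this directly: the matrices are block lower triangular with diagonal blocks $A_{0}$.

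Next, Propositions~\ref{P56} and~\ref{P57} state that $\SR_{\rho}$ is of type $(0,2,0)$ for all $\rho>0$. Thus $\det\SR_{\rho}$ has no zeros inside $\bT$, and the second item of Theorem~\ref{th:types}, that is, Theorem~\ref{th:mapol}, gives $\ka(\mathbf{S}_{n}(\rho))=O(n^{\nu})$ for some finite $\nu$. This is the claim.

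The only point needing care is this applicability check, since everything else is immediate. Concretely, we must confirm that $\det A_{0}\ne0$, so that the degree of $\det\SR_{\rho}$ is exactly $2$ and the counted zeros are all the zeros. We must also confirm that $a_{\rho}\ne0$, so the quadratic is genuine. Both follow from the same computation of $a_{\rho}$ above. In particular, no zero escapes to infinity or to the origin, so type $(0,2,0)$ is correctly counted.
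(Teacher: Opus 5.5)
Your proposal is correct and follows the paper's own proof, which likewise obtains the corollary directly from Propositions~\ref{P56} and~\ref{P57} together with the second item of Theorem~\ref{th:types} (that is, Theorem~\ref{th:mapol} with $k=0$). Your separate check that $\det A_{0}\ne 0$ is harmless but redundant, because type $(0,2,0)$ already implies it: a zero of $\det\SR_{\rho}$ at $t=0$ would be counted among the zeros inside~$\bT$, forcing $s\ge 1$.
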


\begin{proof}
This is a direct consequence of Theorem~\ref{th:types} and Propositions \ref{P56} and \ref{P57}.
\end{proof}

Figure~\ref{fig:8} reports the condition numbers of $\mathbf{S}_n^{(2,0)}(\rho)$ and $\mathbf{S}_n^{(3,0)}(\rho)$. In contrast with Figures~\ref{fig:3} and~\ref{fig:4} for the wave equation, and in agreement with Corollary~\ref{cor:514}, these grow only algebraically in $n$ for every $\rho>0$.

\begin{figure}
    \centering
    \begin{subfigure}{0.49\linewidth}
        \centering
        \includegraphics[width=\linewidth]{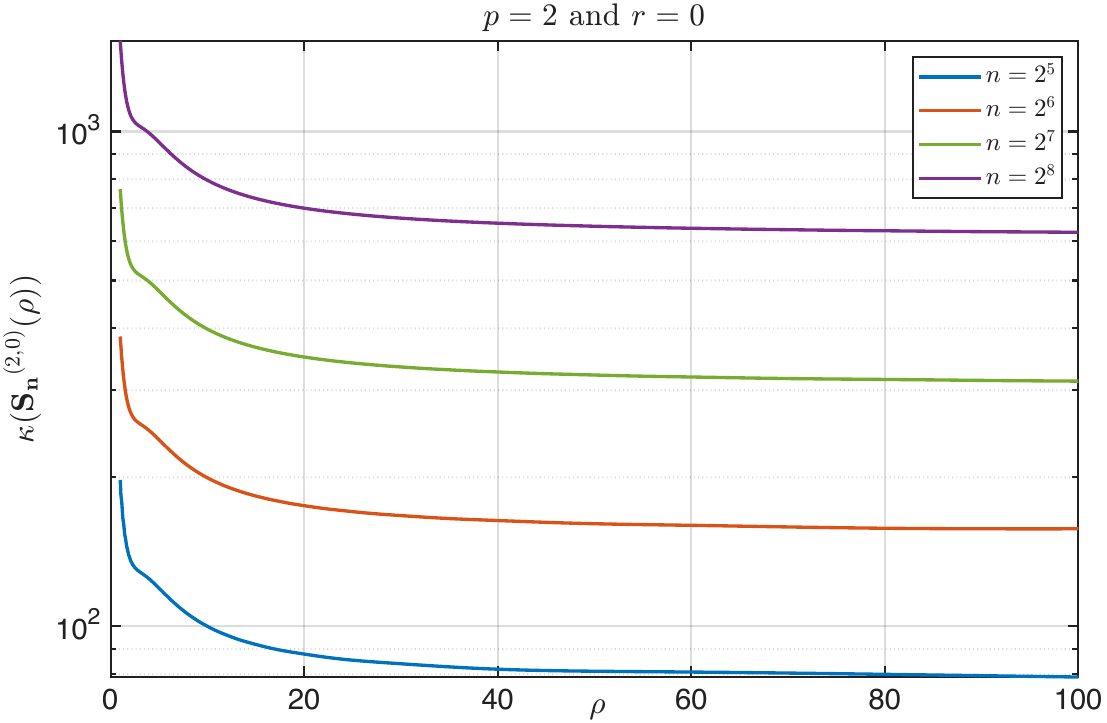}
    \end{subfigure}
    \hfill
    \begin{subfigure}{0.49\linewidth}
        \centering
        \includegraphics[width=\linewidth]{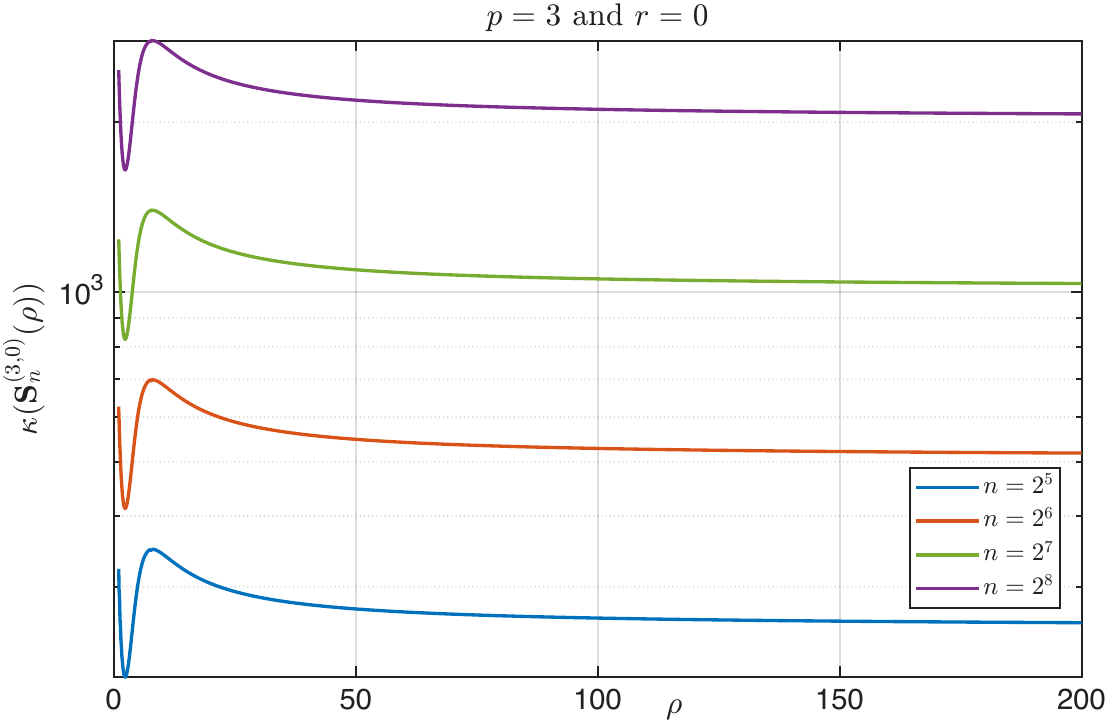}
    \end{subfigure}
    \caption{Condition numbers of the matrices $\mathbf{S}_n^{(2,0)}(\rho)$ (on the left) and $\mathbf{S}_n^{(3,0)}(\rho)$ (on the right) defined in \eqref{eq:Sn}, respectively, by varying $\rho$ and~$n$.}
    \label{fig:8}
\end{figure}

\medskip
\noindent
\textbf{Case} $\boldsymbol{p=3,}$ $\boldsymbol{r=1.}$
The matrix $\mathbf{B}_{n}^{(3,1)}$ is the one in~\eqref{eq:419}, and the matrix $\mathbf{C}_{n}^{(3,1)}\in\bR^{(2N_{\mesh}+1)\times (2N_{\mesh}+1)}$ is

\[
\mathbf{C}_{n}^{(3,1)}=\frac{1}{80}\begin{pmatrix}
\newcommand{\cell}[1]{\makebox[1.4em][r]{$#1$}}
\newcommand{\lr}[1]{\multicolumn{1}{|r}{\cell{#1}}}
\newcommand{\rr}[1]{\multicolumn{1}{r|}{\cell{#1}}}
\;
\begin{array}{@{}*{12}{c}@{}}
\cline{1-2}\cline{3-4}
\lr{24} & \cell{14} & \lr{2} & \rr{0} &&&&&&&& \\
\lr{0} & \cell{18} & \lr{6} & \rr{0} &&&&&&&& \\
\cline{1-2}\cline{3-4}\cline{5-6}
\lr{-18} & \cell{0} & \lr{24} & \rr{7} & \cell{1} & \rr{0} &&&&&& \\
\lr{-6} & \cell{-24} & \lr{0} & \rr{25} & \cell{7} & \rr{0} &&&&&& \\
\cline{1-2}\cline{3-4}\cline{5-6}\cline{7-8}
\lr{0} & \cell{-7} & \lr{-25} & \rr{0} & \cell{24} & \rr{7} & \cell{1} & \rr{0} &&&& \\
\lr{0} & \cell{-1} & \lr{-7} & \rr{-24} & \cell{0} & \rr{25} & \cell{7} & \rr{0} &&&& \\
\cline{1-2}\cline{3-4}\cline{5-6}\cline{7-8}
\multicolumn{2}{c}{\ddots} & \multicolumn{2}{c}{\ddots} & \multicolumn{2}{c}{\ddots} & \multicolumn{2}{c}{\ddots} & \multicolumn{2}{c}{\ddots}&& \\
\cline{3-4}\cline{5-6}\cline{7-8}\cline{9-10}
&& \lr{0} & \cell{-7} & \lr{-25} & \cell{0} & \lr{24} & \cell{7} & \lr{1} & \rr{0} && \\
&& \lr{0} & \cell{-1} & \lr{-7} & \cell{-24} & \lr{0} & \cell{25} & \lr{7} & \rr{0} && \\
\cline{3-4}\cline{5-6}\cline{7-8}\cline{9-10}
&&&& \lr{0} & \cell{-7} & \lr{-25} & \cell{0} & \lr{24} & \rr{6} & \cell{2} \\
&&&& \lr{0} & \cell{-1} & \lr{-7} & \cell{-24} & \lr{0} & \rr{18} & \cell{14} \\
\cline{5-6}\cline{7-8}\cline{9-10}
&&&&&& \cell{0} & \cell{-6} & \cell{-18} & \cell{0} & \cell{24} \end{array}
\;\;
\end{pmatrix}.
\]

Once again, this matrix does not fit the framework of Theorem~\ref{cr:R} due to perturbations in the top-left and bottom-right corners, as well as an additional spurious row and column.
We consider the pure block Toeplitz band extension $\widetilde{\mathbf{C}}_{n}^{(3,1)}\in\bR^{2N_{\mesh}\times 2N_{\mesh}}$:
\[
\widetilde{\mathbf{C}}_{n}^{(3,1)}=\frac{1}{80}\begin{pmatrix}
\newcommand{\cell}[1]{\makebox[1.4em][r]{$#1$}}
\newcommand{\lr}[1]{\multicolumn{1}{|r}{\cell{#1}}}
\newcommand{\rr}[1]{\multicolumn{1}{r|}{\cell{#1}}}
\;
\begin{array}{@{}*{10}{c}@{}}
\cline{1-2}\cline{3-4}
\lr{24} & \cell{7} & \lr{1} & \rr{0} &&&&&& \\
\lr{0} & \cell{25} & \lr{7} & \rr{0} &&&&&& \\
\cline{1-2}\cline{3-4}\cline{5-6}
\lr{-25} & \cell{0} & \lr{24} & \rr{7} & \cell{1} & \rr{0} &&&& \\
\lr{-7} & \cell{-24} & \lr{0} & \rr{25} & \cell{7} & \rr{0} &&&& \\
\cline{1-2}\cline{3-4}\cline{5-6}\cline{7-8}
\lr{0} & \cell{-7} & \lr{-25} & \rr{0} & \cell{24} & \rr{7} & \cell{1} & \rr{0} && \\
\lr{0} & \cell{-1} & \lr{-7} & \rr{-24} & \cell{0} & \rr{25} & \cell{7} & \rr{0} && \\
\cline{1-2}\cline{3-4}\cline{5-6}\cline{7-8}
\multicolumn{2}{c}{\ddots} & \multicolumn{2}{c}{\ddots} & \multicolumn{2}{c}{\ddots} & \multicolumn{2}{c}{\ddots} & \multicolumn{2}{c}{\ddots}\\
\cline{3-4}\cline{5-6}\cline{7-8}\cline{9-10}
&& \lr{0} & \cell{-7} & \lr{-25} & \cell{0} & \lr{24} & \rr{7} & \cell{1} & \rr{0}\\
&& \lr{0} & \cell{-1} & \lr{-7} & \cell{-24} & \lr{0} & \rr{25} & \cell{7} & \rr{0}\\
\cline{3-4}\cline{5-6}\cline{7-8}\cline{9-10}
&&&& \lr{0} & \cell{-7} & \lr{-25} & \rr{0} & \cell{24} & \rr{7}\\
&&&& \lr{0} & \cell{-1} & \lr{-7} & \rr{-24} & \cell{0} & \rr{25}\\
\cline{5-6}\cline{7-8}\cline{9-10}
\end{array}
\;\;
\end{pmatrix}.
\]

We then define
\begin{equation} \label{eq:Sntilde}
\widetilde{\mathbf{S}}_{n}^{(3,1)}(\rho)=\mi\widetilde{\mathbf{B}}_{n}^{(3,1)}-\rho\widetilde{\mathbf{C}}_{n}^{(3,1)}.
\end{equation}

We associate with $\widetilde{\mathbf{C}}_{n}^{(3,1)}$ the four $2\times 2$ matrices that define its Toeplitz structure:
\begin{align*}
\widetilde{\mathbf{C}}_{n}^{(3,1)}\to
\overbrace{\frac{1}{80}\begin{pmatrix}[r]
0 &-7 \\
0 &-1
\end{pmatrix}}^{2}
\quad
\overbrace{\frac{1}{80}\begin{pmatrix}[r]
-25 & 0 \\
-7 &-24
\end{pmatrix}}^{1}
\quad
\overbrace{\frac{1}{80}\begin{pmatrix}[r]
24 & 7 \\
0 & 25
\end{pmatrix}}^{0}
\quad
\overbrace{\frac{1}{80}\begin{pmatrix}[r]
1 & 0 \\
7 & 0
\end{pmatrix}}^{-1}.
\end{align*}

\begin{proposition} \label{prop:515}
For all $\rho>0$, the polynomial $\SR_{\rho}^{(3,1)}$ associated with $\widetilde{\mathbf{S}}_{n}^{(3,1)}(\rho)$ is of type $(2,2,1)$.
\end{proposition}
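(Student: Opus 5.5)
The plan follows the proof of Proposition~\ref{prop:59}. From the four blocks of $\widetilde{\mathbf{B}}_{n}^{(3,1)}$ in~\eqref{eq:423} and of $\widetilde{\mathbf{C}}_{n}^{(3,1)}$ listed above, I would first form
\[
\SR_{\rho}^{(3,1)}(t)=\sum_{j=-1}^{2}\big(\mi \mathbf{B}_{j}-\rho\mathbf{C}_{j}\big)t^{j+1}
\]
and compute its determinant. The blocks at index $-1$ have vanishing second column and those at index $2$ have vanishing first column. The same structure appeared for the wave equation, so I expect again a factor $t$ and
\[
\det\SR_{\rho}^{(3,1)}(t)=c\,t\,q_{\rho}(t),\qquad q_{\rho}(t)=a_{\rho}t^{4}+b_{\rho}t^{3}+c_{\rho}t^{2}+d_{\rho}t+e_{\rho}.
\]
The root $t=0$ already accounts for one zero inside $\bT$. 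It therefore remains to show that $q_{\rho}$ has exactly one zero inside, two on, and one outside the unit circle.

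Since $\mathbf{B}$ is real while $\mathbf{C}$ enters with a real coefficient and $\mathbf{B}$ with $\mi$, the entries are complex. I expect the same self-inversive symmetry as in Propositions~\ref{P56} and~\ref{P57}: $q_{\rho}(t)=\epsilon\, t^{4}\,\ol{q_{\rho}(1/\ol{t})}$ with $|\epsilon|=1$. This symmetry would be checked directly from the computed coefficients (e.g. $e_{\rho}=\epsilon\ol{a}_{\rho}$ and $d_{\rho}=\epsilon\ol{b}_{\rho}$). With it, the zeros of $q_{\rho}$ are symmetric with respect to inversion in $\bT$. Hence the type is $(1,2,1)$ or $(2,0,2)$ or $(0,4,0)$ for $q_{\rho}$, and the task reduces to counting the zeros on $\bT$.

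To count them, I would substitute $t=\e^{\mi\tht}$ and multiply by a suitable unimodular factor $\omega\,\e^{-2\mi\tht}$. This should produce a real trigonometric polynomial $g_{\rho}(\tht)=\alpha_{\rho}\cos 2\tht+\beta_{\rho}\cos\tht+\gamma_{\rho}+\delta_{\rho}\sin 2\tht+\eta_{\rho}\sin\tht$. Its zeros are exactly the unimodular zeros of $q_{\rho}$.

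If the sine terms vanish after the right choice of $\omega$ (a real reciprocal structure, as happened for the wave case), I would set $y=2\cos\tht=t+t^{-1}$ as in Proposition~\ref{prop:43}. I would then show that the quadratic $f_{\rho}(y)$ has leading coefficient of fixed sign, vertex outside $[-2,2]$, and $f_{\rho}(2)f_{\rho}(-2)<0$ for all $\rho>0$. These sign checks should be explicit polynomials in $\rho$, like $-120\rho(\rho-42)$ earlier, but here with no sign change on $\rho>0$. This gives exactly one $y\in(-2,2)$ and one $y$ with $|y|>2$, hence one pair on $\bT$ and one real reciprocal pair off $\bT$.

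Otherwise, I would count sign changes of $g_{\rho}$ on $[0,2\pi)$ and exclude double roots via the resultant of $q_{\rho}$ and its reciprocal, or via the discriminant, being nonzero for $\rho>0$. As an alternative, I would use the Schur--Cohn test: the number of zeros of $q_{\rho}$ inside $\bT$ equals the number of zeros of $q_{\rho}'$ inside $\bT$ for self-inversive polynomials (Cohn's theorem), which reduces the count to a cubic.

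The main obstacle is this last step: proving, uniformly in $\rho>0$, that exactly two zeros lie on $\bT$ rather than zero or four. I would first try the $y$-substitution and look for factorizations of $f_{\rho}(\pm2)$ in $\rho$, as in the previous propositions, and fall back on Cohn's theorem if the coefficients are genuinely complex. A sanity check at $\rho\to0$, where the symbol is $\mi\widetilde{\mathbf{B}}$ and Proposition~\ref{prop:43} at $\rho=0$ gives type $(2,2,1)$, together with continuity and the absence of roots crossing $\bT$ (guaranteed by a nonvanishing discriminant), would provide an alternative homotopy argument.
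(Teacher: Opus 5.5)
Your setup is sound: the factor $t$, the conjugate-reciprocal symmetry $q_\rho(t)=t^4\ol{q_\rho(1/\ol{t})}$, and the reduction to counting zeros on $\mathbb{T}$ are all correct. But the decisive step is left open, and none of your routes closes it as stated. That step is showing that exactly two zeros lie on $\mathbb{T}$ for \emph{every} $\rho>0$.

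\begin{itemize}
\item \textbf{The $y=t+t^{-1}$ reduction is unavailable.} We have $\Im a_\rho=-10\rho$ and $\Im b_\rho=80\rho$. So $\mathrm{e}^{-2\mathrm{i}\theta}q_\rho(\mathrm{e}^{\mathrm{i}\theta})=2\Re(a_\rho\mathrm{e}^{2\mathrm{i}\theta})+2\Re(b_\rho\mathrm{e}^{\mathrm{i}\theta})+c_\rho$ is already real and contains the terms $20\rho\sin 2\theta-160\rho\sin\theta$. The only unimodular factors that keep it real are $\pm1$, so the sine terms cannot be removed.
\item \textbf{The resultant test detects nothing.} The resultant of $q_\rho$ with its reciprocal vanishes identically, because for a self-inversive polynomial the two are proportional.
\item \textbf{The quoted form of Cohn's theorem is false.} For example, $z^2-3z+1$ is self-inversive with one zero in the disk, while $2z-3$ has none there. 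The correct statement compares zeros \emph{outside} the closed disk.
\item \textbf{The homotopy from $\rho=0$ fails at its base point.} Here $q_0(t)=30(t-1)^2(t^2-6t+1)$ has a double zero at $t=1\in\mathbb{T}$, so the discriminant vanishes there. Continuity alone cannot decide whether this pair stays on $\mathbb{T}$ or splits into an inside/outside pair for small $\rho>0$. Moreover, the nonvanishing of the discriminant on $(0,\infty)$ is only asserted.
\end{itemize}

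The missing observation is that $t=1$ is a zero for every $\rho$: $q_\rho(1)=2\Re a_\rho+2\Re b_\rho+c_\rho=0$. Dividing it out leaves the cubic $a_\rho t^3+B_\rho t^2-\ol{B}_\rho t-\ol{a}_\rho$ with $B_\rho=a_\rho+b_\rho$. This cubic satisfies $t^3\ol{q(1/\ol{t})}=-q(t)$, so its zeros off $\mathbb{T}$ come in reflected pairs and it has one or three zeros on $\mathbb{T}$.

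The Cayley substitution $t=(1+\mathrm{i}x)/(1-\mathrm{i}x)$ turns these unimodular zeros into the real zeros of $R_\rho(x)=6(\rho^2-10)x^3+25\rho x^2+5(\rho^2-6)x+15\rho$. The discriminant of $R_\rho$ is negative for all $\rho>0$. Hence the cubic has exactly one zero on $\mathbb{T}$, one inside and one outside. Together with $t=0$ and $t=1$, this gives type $(2,2,1)$.

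Your homotopy idea could also be repaired. You would anchor it at some $\rho_0>0$ where the roots can be located explicitly. You would then actually prove that the quartic's discriminant has no zeros on $(0,\infty)$. That is a heavier computation, and your proposal leaves it undone.
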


\begin{proof}
The determinant can be computed explicitly:
\[
\det \SR_{\rho}^{(3,1)}(t)=\frac{3}{800}t\big(a_{\rho} t^{4}+b_{\rho} t^{3}+c_{\rho} t^{2}+\ol{b}_{\rho} t+\ol{a}_{\rho}\big),
\]
where the coefficients are
\begin{align*}
a_{\rho}=-\rho^{2}-10\mi\rho+30,\quad
b_{\rho}=24\rho^{2}+80\mi\rho-240,\quad
c_{\rho}=-46\rho^{2}+420.
\end{align*}
We analyze the roots of the polynomial
\[
p_{\rho}(t)=a_{\rho} t^{4}+b_{\rho} t^{3}+c_{\rho} t^{2}+\ol{b}_{\rho} t+\ol{a}_{\rho}.
\]
Note that if $t_{0}$ is a root of $p_{\rho}$, then $\ol{t}_{0}^{-1}$ is also a root.
Moreover, $t=1$ is a root; hence, another root lies on the boundary of the unit circle.
We divide $p_{\rho}(t)$ by $t-1$ to obtain
\[
p_{\rho}(t)=(t-1)\big(A_{\rho} t^{3}+B_{\rho} t^{2}-\ol{B}_{\rho} t-\ol{A}_{\rho}\big),
\]
with $A_{\rho}=a_{\rho}$, $B_{\rho}=a_{\rho}+b_{\rho}$.
To count the number of zeros of
\[
q_{\rho}(t)=A_{\rho} t^{3}+B_{\rho} t^{2}-\ol{B}_{\rho} t-\ol{A}_{\rho}
\]
on the boundary of the unit circle, we apply the transformation $t=(1+\mi x)/(1-\mi x)$ with $x\in\bR$.
We obtain
\[
(1-\mi x)^{3} q_{\rho}\bigg(\frac{1+\mi x}{1-\mi x}\bigg)=8\mi R_{\rho}(x),
\]
with
\[
R_{\rho}(x)=6(\rho^{2}-10)x^{3}+25\rho x^{2}+5(\rho^{2}-6)x+15\rho.
\]
Real roots of $R_{\rho}$ correspond to roots of $q_{\rho}$ on the boundary of the unit circle.
We compute the discriminant of the cubic (in $x$) polynomial $R_{\rho}$:
\[
\begin{aligned}
\mathrm{Disc}(R_{\rho})&=-3000\rho^{8}+83425\rho^{6}-855000\rho^{4}-5269500\rho^{2}-6480000\\
&=-25\rho^{4}(120\rho^{4}-3337\rho^{2}+34200)-5269500\rho^{2}-6480000.
\end{aligned}
\]
We note that for all $\rho>0$, we have $\mathrm{Disc}(R_{\rho})<0$, since the quadratic polynomial $120y^{2}-3337y+34200$ assumes strictly positive values for all $y\in\bR$.
This implies that $R_{\rho}$ has exactly one real root.
Consequently, $q_{\rho}$ has exactly one root on the boundary of the unit circle, one strictly inside, and one strictly outside.
\end{proof}

\begin{figure}
    \centering
    \begin{subfigure}{0.49\linewidth}
        \centering
        \includegraphics[width=\linewidth]{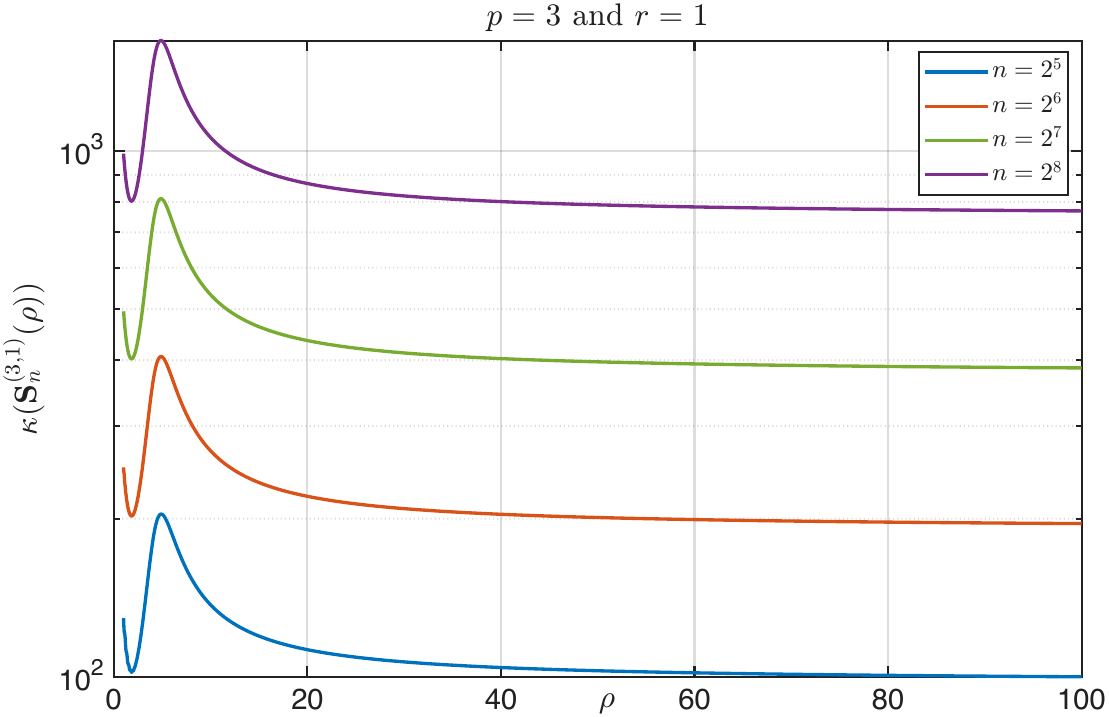}
    \end{subfigure}
    \hfill
    \begin{subfigure}{0.49\linewidth}
        \centering
        \includegraphics[width=\linewidth]{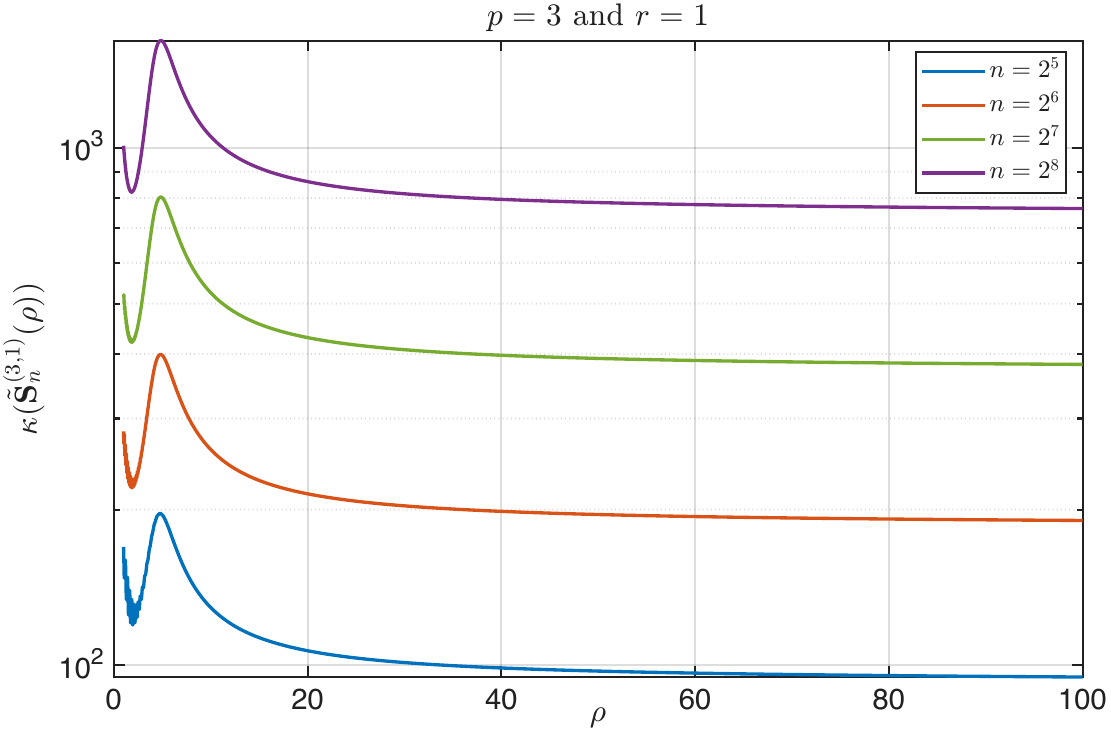}
    \end{subfigure}
    \caption{Condition numbers of the matrices $\mathbf{S}_n^{(3,1)}(\rho)$ (on the left) and $\widetilde{\mathbf{S}}_n^{(3,1)}(\rho)$ (on the right) defined in \eqref{eq:Sn} and \eqref{eq:Sntilde}, respectively, by varying $\rho$ and~$n$.}
    \label{fig:9}
\end{figure}

In Figure~\ref{fig:9}, we report the condition numbers of the matrices $\mathbf{S}_n^{(3,1)}(\rho)$ (on the left) and $\widetilde{\mathbf{S}}_n^{(3,1)}(\rho)$ (on the right). No differences are observed between the two families. Since $\SR_{\rho}^{(3,1)}$ is of type $(2,2,1)$ for all $\rho>0$ by Proposition~\ref{prop:515}, this is again the regime discussed in Remark~\ref{rm:221}, which is not covered by Theorems~\ref{th:G1} and~\ref{th:3.9}; the numerically observed at most algebraic growth is consistent with the behavior expected there.

\section{Conclusions}\label{sec:end}

In the present work we have obtained results in three different directions.
\begin{itemize}
\item We took a substantial step in generalizing the work in~\cite{AmoBru96,BoGr99} by obtaining useful bounds for the condition numbers of block Toeplitz matrix-sequences, by describing three regimes, uniformly bounded, polynomially unbounded, exponentially unbounded.
\item We have rewritten the results in the language of the work~\cite{AmoBru96}, results of which are currently employed in the numerical analysis community.
\item We have finally applied the theoretical results to stability issues of the matrix-sequences arising from the IgA approximation with intermediate regularity of the linear wave and linear Schr\"odinger equations, thus generalizing the results in~\cite{FeFr26,FeGo25}, which hold for the case of maximal regularity, that is, for $N=p-r=1$.
\end{itemize}

A first open problem is a systematic study of the symbol of the matrices associated with splines of intermediate regularity, like the one performed in~\cite{GaMa14,FeFr26} for the maximal regularity case. This would possibly extend the analysis started in this paper from some examples to general formulas, which are currently available only for the case of maximal regularity, like~\eqref{rho} and~\eqref{delta}. Another open problem is represented by an exhaustive study of the stability taking into account the low-rank perturbations which appear in concrete applications, such as the case $(p,r) = (3,1)$. Furthermore, it is highly desirable to have an analogue of Theorem \ref{th:types} for $t^{-1}S(t)$ in place of $S(t)$. Note that in particular the type $(2, 2, 1)$ is so far intractable unless the matrices are not block triangular.

As further future work, it would be nice to incorporate graded meshes and variable coefficients in the PDEs considered here.
From a discrete viewpoint, instead of block Toeplitz matrix-sequences we would end up with block generalized locally Toeplitz matrix-sequences for which spectral tools have been given in the quoted literature~\cite{BaGa20b,BaGa20a} and already used in connections with several approximation methods; see~\cite{BaGa20b,BaGa20a,GaMa14,GaSe18a,GaSe15,GaSp19} and references therein.
An ambitious goal would be the extension of the present work to this more involved setting, and this should indeed be the subject of future investigations.


\begin{thebibliography}{10}

\bibitem{AmoBru96}
P.~Amodio and L.~Brugnano.
\newblock The conditioning of {T}oeplitz band matrices.
\newblock {\em Math. Comput. Modelling}, 23(10):29--42, 1996.

\bibitem{BaGa20b}
G.~Barbarino, C.~Garoni, and S.~Serra-Capizzano.
\newblock Block generalized locally {T}oeplitz sequences: {T}heory and applications in the multidimensional case.
\newblock {\em Electron. Trans. Numer. Anal.}, 53:113--216, 2020.

\bibitem{BaGa20a}
G.~Barbarino, C.~Garoni, and S.~Serra-Capizzano.
\newblock Block generalized locally {T}oeplitz sequences: {T}heory and applications in the unidimensional case.
\newblock {\em Electron. Trans. Numer. Anal.}, 53:28--112, 2020.

\bibitem{BoDo09}
A.~B\"ottcher and P.~D\"orfler.
\newblock On the best constants in inequalities of the Markov and Wirtinger types for polynomials on the half-line.
\newblock {\em Linear Algebra Appl.}, 430:1057--1069, 2009.

\bibitem{BoGr99}
A.~B\"{o}ttcher and {\relax S.M}.~Grudsky.
\newblock {T}oeplitz band matrices with exponentially growing condition numbers.
\newblock {\em Electronic J. Linear Algebra}, 5(1):104--125, 1999.

\bibitem{BoGr05}
A.~B\"{o}ttcher and {\relax S.M}.~Grudsky.
\newblock {\em Spectral properties of banded Toeplitz matrices}.
\newblock SIAM, Philadelphia, 2005.

\bibitem{BoSi99}
A.~B\"{o}ttcher and B.~Silbermann.
\newblock {\em Introduction to large truncated {T}oeplitz matrices}.
\newblock Universitext. Springer-Verlag, New York, 1999.

\bibitem{BoWi06}
A.~B\"{o}ttcher and H.~Widom.
\newblock From Toeplitz eigenvalues through Green's kernels to higher-order Wirtinger-Sobolev inequalities.
\newblock {\em Operator Theory: Adv. and Appl.}, 171:73--87, 2006.

\bibitem{CoHu09}
J.~Cottrell, T.~Hughes, and Y.~Bazilevs.
\newblock {\em Isogeometric analysis: {T}oward integration of {CAD} and {FEA}}.
\newblock John Wiley \& Sons, 2009.

\bibitem{Da75a}
{\relax K.M}.~Day.
\newblock Measures associated with {T}oeplitz matrices generated by the {L}aurent expansion of rational functions.
\newblock {\em Trans. Amer. Math. Soc.}, 209:175--183, 1975.

\bibitem{Da75b}
{\relax K.M}.~Day.
\newblock Toeplitz matrices generated by the {L}aurent series expansion of an arbitrary rational function.
\newblock {\em Trans. Amer. Math. Soc.}, 206:224--245, 1975.

\bibitem{dB01}
C.~de~Boor.
\newblock {\em A practical guide to splines}, volume~27 of {\em Applied Mathematical Sciences}.
\newblock Springer-Verlag, New York, 2001.

\bibitem{DoNeySe12}
M.~Donatelli, M.~Neytcheva, and S.~Serra-Capizzano.
\newblock Canonical eigenvalue distribution of multilevel block {T}oeplitz sequences with non-{H}ermitian symbols.
\newblock {\em Oper. Theory Adv. Appl.}, 221:269--291, 2012.

\bibitem{Goh1}
H.~Dym and I.~Gohberg.
\newblock On unitary interpolants and {F}redholm infinite block {T}oeplitz matrices.
\newblock {\em Integral Equations Operator Theory}, 6(6):863--878, 1983.

\bibitem{FeFr26}
M.~Ferrari and S.~Fraschini.
\newblock Stability of conforming space–time isogeometric methods for the wave equation.
\newblock {\em Math. Comp.}, 95(1):683--719, 2026.

\bibitem{FeFrLoPe25}
M.~Ferrari, S.~Fraschini, G.~Loli, and I.~Perugia.
\newblock Unconditionally stable space-time isogeometric discretization for the wave equation in {H}amiltonian formulation.
\newblock {\em ESAIM: Math. Model. Numer. Anal.}, 59(5):2447--2490, 2025.

\bibitem{FeGo25}
M.~Ferrari and S.~Gómez.
\newblock Unconditionally stable space-time isogeometric method for the linear {S}chr\"odinger equation, 2025.

\bibitem{FrLo23}
S.~Fraschini, G.~Loli, A.~Moiola, and G.~Sangalli.
\newblock An unconditionally stable space–time isogeometric method for the acoustic wave equation.
\newblock {\em Comput. Math. Appl.}, 169:205--222, 2024.

\bibitem{GaMa14}
C.~Garoni, C.~Manni, F.~Pelosi, S.~Serra-Capizzano, and H.~Speleers.
\newblock On the spectrum of stiffness matrices arising from isogeometric analysis.
\newblock {\em Numer. Math.}, 127:751--799, 2014.

\bibitem{GaSe18a}
C.~Garoni and S.~Serra-Capizzano.
\newblock Generalized {L}ocally {T}oeplitz sequences: {A} spectral analysis tool for discretized differential equations.
\newblock In {\em Splines and PDEs: From Approximation Theory to Numerical Linear Algebra}, pages 161--236. Springer, Cham, 2018.

\bibitem{GaSe15}
C.~Garoni, S.~Serra-Capizzano, and D.~Sesana.
\newblock Spectral analysis and spectral symbol of $d$-variate $\mathbb{Q}_p$ {L}agrangian {FEM} stiffness matrices.
\newblock {\em SIAM J. Matrix Anal. Appl.}, 36(3):1100--1128, 2015.

\bibitem{GaSp19}
C.~Garoni, H.~Speleers, S.-E. Ekstr{\"{o}}m, A.~Reali, S.~Serra-Capizzano, and T.~Hughes.
\newblock Symbol-based analysis of finite element and isogeometric {B}-spline discretizations of eigenvalue problems: {E}xposition and review.
\newblock {\em Arch. Comput. Method. E.}, 26(5):1639--1690, 2019.

\bibitem{GoFe}
I.~Gohberg and {\relax I.A}.~Feldman.
\newblock {\em Convolution equations and projection methods for their solution}.
\newblock Amer. Math. Soc., Providence, RI, 1974.

\bibitem{Goh2}
I.~Gohberg and {\relax M.A}.~Kaashoek.
\newblock Block {T}oeplitz operators with rational symbols.
\newblock {\em Operator Theory Adv. Appl.}, 35:385--440, 1988.

\bibitem{Goh3}
I.~Gohberg and {\relax M.A}.~Kaashoek.
\newblock Projection method for block {T}oeplitz operators with operator-valued symbols.
\newblock {\em Operator Theory Adv. Appl.}, 71: 79--104, 1994.

\bibitem{Ha24}
{\relax J.I.M}.~Hauser.
\newblock Space-time {FEM} for the vectorial wave equation under consideration of {O}hm's law.
\newblock {\em Comput. Methods Appl. Math.}, 24(3):693--723, 2024.

\bibitem{HuCoBa05}
{\relax T.J.R}.~Hughes, {\relax J.A}.~Cottrell, and Y.~Bazilevs.
\newblock Isogeometric analysis: Cad, finite elements, nurbs, exact geometry and mesh refinement.
\newblock {\em Computer Methods in Applied Mechanics and Engineering}, 194(39--41):4135--4195, 2005.

\bibitem{LiSp87}
G.~Litvinchuk and {\relax I.M}.~Spitkovskii.
\newblock {\em Factorization of measurable matrix functions}, volume~25 of {\em Operator Theory: Advances and Applications}.
\newblock Birkh\"auser Verlag, Basel, 1987.

\bibitem{Pra96}
{\relax V.V}.~Prasolov.
\newblock {\em Problems and theorems in linear algebra}.
\newblock Amer. Math. Soc., Providence, RI, 1996.

\bibitem{Se99e}
S.~Serra-Capizzano.
\newblock Asymptotic results on the spectra of block {T}oeplitz preconditioned matrices.
\newblock {\em SIAM J. Matrix Anal. Appl.}, 20(1):31--44, 1999.

\bibitem{Se99f}
S.~Serra-Capizzano.
\newblock Spectral and computational analysis of block {T}oeplitz matrices having nonnegative definite matrix-valued generating functions.
\newblock {\em BIT}, 39(1):152--175, 1999.

\bibitem{SeTi99}
S.~Serra-Capizzano and P.~Tilli.
\newblock Extreme singular values and eigenvalues of non-{H}ermitian block {T}oeplitz matrices.
\newblock {\em J. Comput. Appl. Math.}, 108(1/2):113--130, 1999.

\bibitem{Si64}
{\relax I.B}.~Simonenko.
\newblock The {R}iemann boundary-value problem for $n$ pairs of functions with measurable coefficients and its application to the study of singular integrals in {$L_{p}$} spaces with weights.
\newblock {\em Izv. Akad. Nauk SSSR Ser. Mat.}, 28:277--306, 1964.

\bibitem{StZa19}
O.~Steinbach and M.~Zank.
\newblock {\em A stabilized space-time finite element method for the wave equation}, volume 128 of {\em Lect. Notes Comput. Sci. Eng.}
\newblock Springer, 2019.

\bibitem{StZa20}
O.~Steinbach and M.~Zank.
\newblock Coercive space-time finite element methods for boundary value problems.
\newblock {\em Electron. Trans. Numer. Anal.}, 52:154--194, 2020.

\bibitem{Ti98a}
P.~Tilli.
\newblock A note on the spectral distribution of {T}oeplitz matrices.
\newblock {\em Linear Multilin. Algebra}, 45(2-3):147--159, 1998.

\bibitem{WiBlo3}
H.~Widom.
\newblock Asymptotic behavior of block {T}oeplitz matrices and determinants.
\newblock {\em Advances in Math.}, 13:284--322, 1974.

\bibitem{WiBlo2}
H.~Widom.
\newblock On the limit of block {T}oeplitz determinants.
\newblock {\em Proc. Amer. Math. Soc.}, 50:167--173, 1975.

\bibitem{WiBlo1}
H.~Widom.
\newblock Asymptotic behavior of block {T}oeplitz matrices and determinants. {II}.
\newblock {\em Advances in Math.}, 21(1):1--29, 1976.

\end{thebibliography}
\end{document}